\newtheorem{theorem}{Theorem}[section]
\newtheorem{lemma}[theorem]{Lemma}
\newtheorem{corollary}[theorem]{Corollary}
\newtheorem{remark}[theorem]{Remark}
\newtheorem{assumption}{Assumption}
\newcommand{\eremk}{\hbox{}\hfill\rule{0.8ex}{0.8ex}}
\numberwithin{equation}{section}
\newcommand\ev{\boldsymbol{\varepsilon}_{\bv}}
\newcommand\tev{\widetilde{\boldsymbol{\varepsilon}}_{\bv}}
\newcommand{\pdt}[1]{%
\partial_{
  \begingroup
  \my@repeat@count=\z@
  \@whilenum\my@repeat@count<#1\do{t\advance\my@repeat@count\@ne}%
  \endgroup}
}
\newcommand\epsi{\varepsilon_{\psi}}
\newcommand\tepsi{\widetilde{\varepsilon}_{\psi}}
\newcommand\epsitt{\pdt{2} \varepsilon_{\psi}}
\newcommand{\opsih}{\overline{\psi}_h}
\newcommand{\obvh}{\overline{\bv}_h}
\newcommand{\olambdah}{\overline{\lambda}_h}
\newcommand\elambda{\varepsilon_{\lambda}}
\newcommand\telambda{\widetilde{\varepsilon}_{\lambda}}
\newcommand\piM{\mathcal{P}_{\mathcal{M}}}
\newcommand\etapsih{\eta_{\psi, h}}
\newcommand\tetapsih{\widetilde{\eta}_{\psi, h}}
\newcommand\etalambdah{\eta_{\lambda, h}}
\newcommand\tetalambdah{\widetilde{\eta}_{\lambda, h}}
\newcommand\etavh{\boldsymbol{\eta}_{\bv, h}}
\newcommand\tetavh{\widetilde{\boldsymbol{\eta}}_{\bv, h}}
\newcommand\dlambda{\xi_{\lambda}}
\newcommand\tdlambda{\widetilde{\xi}_{\lambda}}
\newcommand\dpsi{\xi_{\psi}}
\newcommand\tdpsi{\widetilde{\xi}_{\psi}}
\newcommand\dbv{\boldsymbol{\xi}_{\bv}}
\newcommand\tdbv{\widetilde{\boldsymbol{\xi}}_{\bv}}
\newcommand\piHDG{\Pi_{\sf{HDG}}}
\newcommand\piS{\Pi_{\mathcal{S}}}
\newcommand\piQ{\Pi_{\boldsymbol{\mathcal{Q}}}}
\newcommand\hK{h_K}
\renewcommand{\dot}[1]{\accentset{\mbox{\bfseries .}}{#1}}
\renewcommand{\ddot}[1]{%
  \accentset{\mbox{\bfseries .\hspace{-0.25ex}.}}{#1}}
\newcommand\Sp{\mathcal{S}_h^p}
\newcommand\Mp{\mathcal{M}_h^p}
\newcommand\Qp{\boldsymbol{\mathcal{Q}}_h^p}
\newcommand\Th{\mathcal{T}_h}
\newcommand\dx{\text{d}\bx}
\newcommand\dS{\text{d}S}
\newcommand\ds{\text{d}s}
\newcommand\dt{\text{d}t}
\newcommand\Fh{\mathcal{F}_h}
\newcommand\FhI{\mathcal{F}_h^\mathcal{I}}
\newcommand\FhD{\mathcal{F}_h^{\mathcal{D}}}
\newcommand{\BK}{\mathcal{B}_{\textup{F-P}}}
\newcommand\bm{\boldsymbol{m}}
\newcommand\calS{\mathcal{S}}
\newcommand\calE{\mathcal{E}}
\newcommand\calA{\mathcal{A}}
\newcommand\calL{\mathcal{L}}
\newcommand\calR{\mathcal{R}}
\newcommand\calN{\mathcal{N}}
\newcommand\bM{\boldsymbol{M}}
\newcommand\bX{\boldsymbol{X}}
\newcommand\psih{\psi_h}
\newcommand\psihi{\psi_{h}^{(i)}}
\newcommand\tpsih{\widetilde{\psi}_h}
\newcommand\tpsiht{\pdt{1}{\widetilde{\psi}}_h}
\newcommand\psiht{\pdt{1}{\psi}_h}
\newcommand\psihtt{\pdt{2}{\psi}_h}
\newcommand\psihttt{\pdt{3}{\psi}_h}
\newcommand\Psih{\Psi_h}
\newcommand\tPsih{\widetilde{\Psi}_h}
\newcommand\V{\mathbf{V}}
\newcommand\Vh{\V_h}
\newcommand\tVh{\widetilde{\mathbf{V}}_h}
\newcommand\brh{\bunderline{\boldsymbol{r}}_h}
\newcommand\bvh{\bunderline{\boldsymbol{v}}_h}
\newcommand\bvhi{\bunderline{\boldsymbol{v}}_{h}^{(i)}}
\newcommand\tbvh{\widetilde{\bunderline{\boldsymbol{v}}}_h}
\newcommand\tbvht{\pdt{1}{\widetilde{\bunderline{\boldsymbol{v}}}}_h}
\newcommand\bvht{\pdt{1}{\bunderline{\boldsymbol{v}}}_{h}}
\newcommand\bvhtt{\pdt{2}{\bunderline{\boldsymbol{v}}}_{h}}
\newcommand\thetah{\theta_h}
\newcommand\muh{\mu_h} 
\newcommand\lambdah{\lambda_h} 
\newcommand\lambdahi{\lambda_{h}^{(i)}} 
\newcommand\tlambdah{\widetilde{\lambda}_h} 
\newcommand\tlambdaht{\pdt{1}{\widetilde{\lambda}}_h} 
\newcommand\lambdaht{\pdt{1}{\lambda}_h}
\newcommand\lambdahtt{\pdt{2}{\lambda}_h}
\newcommand\Lambdah{\Lambda_h} 
\newcommand\tLambdah{\widetilde{\Lambda}_h} 
\newcommand\wh{w_h}
\newcommand\phih{\phi_h}
\newcommand\alphah{\alpha_h}
\newcommand\alphaht{\pdt{1}{\alpha}_h}
\def\ulal{\underline{\alpha}}
\def\olal{\overline{\alpha}}
\newcommand{\I}[1]{\operatorname{I}_{#1}}
\newcommand\psiFlux{\widehat{\psi}_h}
\newlength{\dhatheight}
\newcommand\vFlux{
    \settoheight{\dhatheight}{\ensuremath{\widehat{\bv_h}}}
    \addtolength{\dhatheight}{-0.40ex}
    \widehat{\vphantom{\rule{1pt}{\dhatheight}}
    \smash{\widehat{\bv}}}_h
}
\newcommand\vFluxt{
    \settoheight{\dhatheight}{\ensuremath{\widehat{\bv_h}}}
    \addtolength{\dhatheight}{-0.40ex}
    \dpt
    \widehat{\vphantom{\rule{1pt}{\dhatheight}}
    \smash{\widehat{\bv}}}_{h}
}
\newcommand\IR{\mathbb{R}}
\newcommand\IN{\mathbb{N}}
\newcommand\QT{Q_T}
\newcommand\bv{\bunderline{\boldsymbol{v}}}
\newcommand\bupsilon{\bunderline{\boldsymbol{\Upsilon}}}
\newcommand\bGamma{{\boldsymbol{\Gamma}}}
\newcommand\bx{\boldsymbol{x}}
\newcommand\bn{\bunderline{\mathbf{n}}}
\newcommand\dpt{\partial_t}
\newcommand\dptt{\partial_{tt}}
\newcommand\dpttt{\partial_{ttt}}
\newcommand\calD{\mathcal{D}}
\newcommand{\Norm}[2]{\| #1 \|_{#2}}
\newcommand{\SemiNorm}[2]{\left| #1 \right|_{#2}}
\newcommand{\jumpN}[1]{\left\llbracket #1\right\rrbracket_{\sf{N}}}
\newcommand{\bunderline}[1]{\underaccent{\bar}{#1}}
\newcommand{\Pp}[2]{\mathbb{P}^{#1}(#2)}
\title{Asymptotic-preserving hybridizable discontinuous Galerkin method for the Westervelt quasilinear wave equation}
\author{Sergio G\'omez\thanks{Department of Mathematics and Applications, University of Milano-Bicocca, 20125 Milan, Italy (\href{mailto:sergio.gomezmacias@unimib.it}{sergio.gomezmacias@unimib.it})} \and Mostafa Meliani\thanks{Department of Mathematics, Radboud University, Heyendaalseweg 135, 6525 AJ Nijmegen,
The Netherlands (\href{mailto:mostafa.meliani@ru.nl}{mostafa.meliani@ru.nl})}}
\date{}
\begin{document}

\maketitle
\begin{abstract}
\noindent 
We discuss the asymptotic-preserving properties of a hybridizable discontinuous Galerkin method for the Westervelt model of ultrasound waves. 
More precisely, we show that the proposed method is robust with respect to small values of the sound diffusivity damping parameter~$\delta$ by deriving low- and high-order energy stability estimates, and \emph{a priori} error bounds that are independent of~$\delta$.
Such bounds are then used to show that, when~$\delta \rightarrow 0^+$, the method remains stable
and the discrete acoustic velocity potential~$\psi_h^{(\delta)}$ converges 
to~$\psi_h^{(0)}$, 
where the latter is the singular vanishing dissipation limit.
Moreover, we 
prove optimal convergence rates for the 
approximation of the acoustic particle velocity variable~$\bv = \nabla \psi$. The established theoretical results are illustrated 
with some numerical experiments.
\end{abstract}

\noindent \textbf{Keywords:} asymptotic-preserving method, nonlinear acoustics, Westervelt equation, hybridizable discontinuous Galerkin method.

\noindent \textbf{Mathematics Subject Classification.} 65M60, 65M15, 35L70.

\section{Introduction}
Let~$\QT = \Omega \times (0, T)$ be a space--time cylinder, where~$\Omega \subset \IR^d$ $(d \in \{2, 3\})$ is an open, bounded polytopic domain with Lipschitz boundary~$\partial \Omega$, and~$T > 0$ is the final time. We consider the following Westervelt equation of nonlinear acoustics~\cite{Westervelt_1963}:
\begin{equation}
\label{EQN::MODEL-PROBLEM}
\begin{cases}
(1 + 2 k\pdt{1}{\psi}) \pdt{2}{\psi} - c^2 \Delta \psi - \delta \Delta(\pdt{1}{\psi}) = 0 & \text{ in } \QT,\\
\psi = 0 & \text{ on }\partial \Omega \times (0, T),\\
\psi = \psi_0, \qquad \dpt \psi = \psi_1 & \text{ on } \Omega \times \{0\},
\end{cases}
\end{equation}
where the unknown~$\psi: \QT \rightarrow \IR$ is the acoustic velocity potential.
In the IBVP~\eqref{EQN::MODEL-PROBLEM}, the constant~$k\in \IR$ is a medium-dependent nonlinearity parameter, $c>0$ is the speed of sound, $\psi_0$ and~$\psi_1$ are given initial data, and~$\delta \geq 0$ is the sound diffusivity coefficient. 

Introducing the acoustic particle velocity variable~$\bv : \QT \rightarrow \IR^d$, defined by~$\bv := \nabla \psi$, the Westervelt equation in~\eqref{EQN::MODEL-PROBLEM}
can be rewritten in mixed form as
\begin{equation}
\label{EQN::MODEL-PROBLEM-MIXED}
\begin{cases}
(1 + 2k \pdt{1}{\psi}) \pdt{2}{\psi} - c^2 \nabla \cdot \bv - \delta \nabla \cdot (\pdt{1}\bv) = 0 & \text{ in } \QT,\\
\bv = \nabla \psi & \text{ in } \QT,\\
\psi = 0 & \text{ on } \partial \Omega \times (0, T),\\
\psi = \psi_0, \qquad \pdt{1}\psi = \psi_1 & \text{ on } \Omega \times \{0\}.
\end{cases}
\end{equation}
Since we study the limit as~$\delta\to0^+$, we make the purely technical assumption that~$\delta\in[0,\overline{\delta})$ for some fixed~$\overline{\delta} > 0$.
Such an assumption is helpful in the limiting behavior analysis in Section~\ref{SEC::ASYMPTOTIC_LIMITS}, as it allows us to make the estimates depend on~$\overline{\delta}$ but never on~$\delta$ itself.

The Westervelt equation models the propagation of sound in a fluid medium, and it is
a well-accepted model in nonlinear acoustics (see e.g.,~\cite[\S 5.3]{Kaltenbacher_2007}). Nonlinear sound propagation finds a multitude of technical and medical applications, 
such as ultrasound imaging, lithotripsy, welding, and sonochemistry; see~\cite{Demi_Verweij_2014,Kaltenbacher_ETAL_2002}.

When the parameter~$\delta$ is strictly positive, equation~\eqref{EQN::MODEL-PROBLEM} is strongly damped, and
its solution enjoys global existence properties for initial conditions satisfying some smallness and regularity  
assumptions as shown 
in~\cite{Kaltenbacher_Lasiecka_2009,Meyer_Wilke_2009}. 
Conversely, when $\delta=0$,  
the main mechanism preventing the formation of singularities is lost and no global existence results are known. 
The stark contrast between these two regimes gives rise to interesting  
issues, such as the 
continuous dependence of the solution 
on the damping parameter~$\delta \to 0^+$, and the interplay of this limit and numerical discretizations. 
A numerical method for the Westervelt equation is said to be \emph{asymptotic preserving} if it allows for interchanging the vanishing limits of the 
mesh size parameter~$h$ and the sound diffusivity parameter~$\delta$, i.e., if it satisfies the commutative diagram in Figure~\ref{FIG::ASYMPTOTIC_PRESERVING}. The main focus  
of this work is to show that the proposed method is asymptotic preserving.

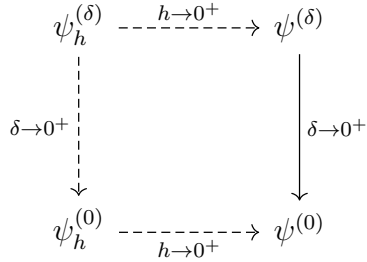
\begin{figure}[!ht]
\centering
\large{
\begin{tikzcd}[row sep = 4.5em, column sep=4.5em]
\psih^{(\delta)} \arrow[dashed, r, "h \to 0^+"] \arrow[dashed, d, "\delta \to 0^+" left]
& \psi^{(\delta)} \arrow[d, "\delta \to 0^+"] \\
\psih^{(0)} \arrow[dashed, r, "h \to 0^+" below]
& \psi^{(0)}
\end{tikzcd}
}
\caption{Asymptotic-preserving commutative diagram for the Westervelt equation. This diagram represents the connections between~$\psih^{(\delta)}$ and~$\psi^{(\delta)}$ as~$h \rightarrow 0^+$ (even in 
the limit case~$\delta = 0$) as well as between~$\psih^{(\delta)}$ and~$\psih^{(0)}$ as~$\delta \to 0^+$.  
The superscript~$(\delta)$ is used to emphasize the dependence on the parameter~$\delta$ of the continuous solution and its numerical approximation.
}\label{FIG::ASYMPTOTIC_PRESERVING}
\end{figure}
In the literature, \emph{a priori} error results for the approximation of the solution to the Westervelt equation initially relied on the assumption of strictly positive values of the damping parameter~$\delta$ (see, e.g., \cite{Nikolic_Wohlmuth_2019, Antonietti_ETAL_2020}). 
Nevertheless, as the damping parameter is relatively small in practice and it can become negligible in certain applications, 
there have been recent efforts
to devise numerical methods that are robust with respect to small values of the  
sound diffusivity parameter~$\delta$. 
In particular, estimates for the standard and mixed finite element discretizations of the Westervelt equation with~$\delta = 0$ follow as particular cases of those in~\cite{Hochbruck_Maier_2022,Maier_PhD_2020,Meliani_Nikolic_2023}, whereas the asymptotic behaviour of such methods for~$\delta \rightarrow 0^+$ has been recently studied in~\cite{Nikolic_2023,Dorich_Nikolic_2024}.  
The main challenge resides in the limited regularity offered by most standard finite element spaces, which hinders the extension of the arguments used to study the vanishing viscosity limit in the continuous setting (see, e.g., \cite{Kaltenbacher_Nikolic_2022}). 

This work concerns the asymptotic analysis of a hybridizable discontinuous Galerkin~(HDG) method for the Westervelt equation when~$\delta \to 0^+$. 
HDG methods, originally introduced in~\cite{Cockburn_Gopalakrishnan_Lazarov_2009} for an elliptic PDE, are a class of discontinuous Galerkin methods characterized by the possibility of performing a local \emph{static condensation} procedure to reduce the number of unknowns of the linear system stemming from the discretization of a $d$-dimensional linear PDE. 
Such a procedure leads to linear systems involving only unknowns associated 
with degrees of freedom on~$(d-1)$-dimensional mesh-facets. 
Although this hybridization property does not naturally extend to nonlinear PDEs, it can be used in combination with suitable nonlinear solvers (see, e.g., Section~\ref{SECT::FULLY-DISCRETE} below). 
Moreover, provided that the exact solution is smooth enough, the Local Discontinuous Galerkin-hybridizible (LDG-H) method in~\cite{Cockburn_Dong_Guzman_2008,Cockburn_Gopalakrishnan_Lazarov_2009} for the Poisson equation converges with optimal order~$\mathcal{O}(h^{p + 1})$ for the~$L^2(\Omega)$-error of the flux variable when approximations of degree~$p$ are used, and allows for a local postprocessing that produces an approximation of degree~$p + 1$ of the primal variable that superconverges with order~$\mathcal{O}(h^{p + 2})$ in the~$L^2(\Omega)$-norm.

To the best of our knowledge, there are four different versions of the HDG method for the linear acoustic wave equation~$(c^{-2} \dptt u - \Delta u = f)$: 
\begin{enumerate}[label = \emph{\alph*)}, topsep 
= 2pt, itemsep = 1.4pt]
\item the dissipative HDG method introduced in~\cite{Nguyen_Peraire_Cockburn_2011} and analyzed in~\cite{Cockburn_Quenneville-belair_2014}, which is based on the first-order system~$(\dpt \bunderline{\boldsymbol{q}} = \nabla v;\  c^{-2} \dpt v - \nabla \cdot \bunderline{\boldsymbol{q}} = f)$ with~$v := \dpt u$ and~$\bunderline{\boldsymbol{q}} := \nabla u$; 
\item the conservative HDG method in~\cite{Griesmaier_Monk_2014} based on the same first-order system, whose energy conserving property is enforced by choosing the \emph{numerical fluxes} 
of~$\bunderline{\boldsymbol{q}}_h$ in dependence of~$\dpt v_h$, which in turn causes a theoretical loss of convergence of half an order;  
\item the HDG method in~\cite{Sanchez_Ciuca_Peraire_Cockburn_2017} for the Hamiltonian formulation~$(\dpt u = v;\ c^{-2} \dpt v = f + \nabla \cdot \bunderline{\boldsymbol{q}})$;
and
\item the conservative HDG method in~\cite{Cockburn-ETAL-2018}, which is based on the 
mixed formulation~$(\bunderline{\boldsymbol{q}} = -\nabla u; \ c^{-2}\dptt u + \nabla \cdot \bunderline{q} = f)$.
\end{enumerate}
The theoretical results in \emph{a)}, \emph{c)}, and \emph{d)} predict optimal convergence for the approximation of all the variables involved, and superconvergence for some (locally computable) postprocessed approximations of the scalar variables. 

In this work, we design an HDG method 
for the Westervelt model, which is based on the conservative HDG method in~\cite{Cockburn-ETAL-2018} for the linear second-order wave equation. This choice allows us to directly approximate the variables of interest~$(\psi, \bv)$, eliminate efficiently the discrete vector variable~$\bvh$ from the nonlinear ODE system, and obtain optimal convergence in the low- and high-order energy norms. Moreover, it facilitates the extension of the techniques used in~\cite{Meliani_Nikolic_2023} for the analysis of mixed FEM discretizations of the Westervelt equation.

\paragraph{Main contributions.}
The main theoretical results in this work are as follows: under some sensible assumptions on the smallness and regularity of the exact solution, we show that
\begin{enumerate}[label = \emph{\roman*)}, topsep 
= 2pt, itemsep = 1.4pt]
\item There exists a unique solution to the proposed HDG semidiscrete formulation.
\item Optimal convergence rates of order~$\mathcal{O}(h^{p+1})$ 
are achieved for the error of the method in some energy norms. 
In particular, the higher accuracy obtained for the approximation of the acoustic particle velocity~$\bv$ exceeds the one expected for standard DG discretizations; cf. \cite{Antonietti_ETAL_2020}. 
An accurate numerical approximation of~$\bv$ is relevant, e.g., for enforcing absorbing conditions~\cite{shevchenko2015absorbing} or gradient-based shape optimization of focused ultrasound devices~\cite{kaltenbacher2016shape,meliani2022analysis}.
\item The method is asymptotic preserving (i.e., the commutative diagram in Figure~\ref{FIG::ASYMPTOTIC_PRESERVING} holds), which implies that 
the semidiscrete approximation does not degenerate when~$\delta \to 0^+$.
\end{enumerate}

These theoretical results are validated in Section~\ref{SEC::NUMERICAL_EXPERIMENTS} below by some numerical examples.
In addition, we numerically observe superconvergence of the discrete approximation of~$\psi$ obtained by the local postprocessing technique in~\cite[Eq.~(2.2)]{Cockburn-ETAL-2018}.


\paragraph*{Outline of the paper.}  
In Section~\ref{SEC::DISCRETIZATION}, we introduce the discrete spaces and the HDG 
semidiscrete formulation for model~\eqref{EQN::MODEL-PROBLEM-MIXED}.
In Section~\ref{SECT::LINEARIZED-PROBLEM}, we study the well-posedness and derive \emph{a priori} error estimates for an auxiliary linearized problem.
By means of a fixed-point argument, such results are extended in Section~\ref{SEC::NONLINEAR_WESTERVELT} to the nonlinear Westervelt equation. Section~\ref{SEC::ASYMPTOTIC_LIMITS} is devoted to establishing the convergence of the numerical scheme to its vanishing $\delta$-limit. In Section~\ref{SEC::NUMERICAL_EXPERIMENTS}, we describe a fully discrete scheme obtained by combining the proposed HDG method with a predictor-corrector Newmark time discretization, and illustrate our theoretical findings with some numerical experiments.
We end this work with some concluding
remarks in Section~\ref{SECT::CONCLUSIONS}.

\paragraph*{Notation.} 
We denote the first, second, and third partial derivatives with respect to the time variable~$t$ of a function~$v$ by~$\pdt{1}{v}$, $\pdt{2}{v}$, and~$\pdt{3}{v}$, respectively.

We shall use the notation $x \lesssim y$, which stands for $x \leq Cy$, where $C$ is a generic constant that does not depend on the mesh size parameter~$h$ nor on the sound
diffusivity parameter~$\delta$.

Standard notation for $L^p$, Sobolev, and Bochner 
spaces is employed throughout. For example, for a given bounded, Lipschitz domain~$D \subset \IR^d$ ($d \in \IN$) and~$s \in \IR^+$, the Sobolev space~$H^s(D)$ is endowed with the standard inner product~$(\cdot, \cdot)_{s, D}$, the seminorm~$|\cdot|_{H^s(D)}$, and the norm~$\|\cdot\|_{H^s(D)}$.
In particular, for~$s = 0$, the space~$H^0(D) := L^2(D)$ is the space of 
Lebesgue square integrable functions over~$D$, and we simply denote its standard 
inner product by~$(\cdot, \cdot)_{D}$.

Let~$n\in \mathbb{N}$, $p\in[1,\infty]$, and~$X$ be a Banach space, and denote by $\partial_t^i$ the $i$th
partial derivative with respect to time. The Bochner space 
$$W^{n,p}(0,T;X) := \{u \in L^p(0,T;X),\quad \partial_t^i u\in L^p(0,T;X) \quad  \forall i\leq n\}$$ 
is endowed with the norm 
$$\|u\|_{W^{n,p}(0,T;X)} := \sum_{i=0}^n \|\partial_t^i u\|_{L^p(0,T;X)} \qquad \text{for all } u\in W^{n,p}(0,T;X).$$

\section{Semidiscrete HDG formulation}\label{SEC::DISCRETIZATION}
Let~$\{\Th\}_{h > 0}$ be a family of conforming simplicial meshes for the domain~$\Omega$ satisfying the standard shape-regularity and quasi-uniformity conditions. 
We denote by~$\Fh = \FhI \cup \FhD$ the set of mesh facets of~$\Th$, where~$\FhI$ and~$\FhD$ are the sets of internal and Dirichlet boundary facets, respectively. 
For each element~$K \in \Th$, we denote by~$(\partial K)^{\mathcal{I}}$ 
and~$(\partial K)^{\calD}$ the union of the facets of~$K$ that belong to~$\FhI$ 
and~$\FhD$, respectively. Denoting the diameter of each element~$K$ by~$\hK$, we 
define the mesh size~$h:= \max_{K \in \Th} \hK$.

Given~$p \in \IN$, we define the following piecewise polynomial spaces:
\begin{equation}
\label{EQN::DG-SPACES}
\Sp := \prod_{K \in \Th} \Pp{p}{K}, \qquad \Qp := \prod_{K \in \Th} \Pp{p}{K}^d, \qquad \Mp := \prod_{F \in \FhI} \Pp{p}{F},
\end{equation}
where~$\Pp{p}{K}$ and~$\Pp{p}{F}$ denote the spaces of polynomials of total degree at most~$p$ on~$K$ and~$F$, respectively. 
We denote by~$\jumpN{\cdot}$ the normal jump operator, which is defined for all~$\wh \in \Sp$ and~$\brh \in \Qp$ as
\begin{align*}
\begin{cases}
\begin{tabular}{ll} 
$\jumpN{\wh} := \wh {}_{|_{K_1}} \bn_{K_1} + \wh {}_{|_{K_2}} \bn_{K_2}$ & \multirow{2}{*}{ on~$F = \partial K_1 \cap \partial K_2 \in \FhI, \text{ for some } K_1, K_2 \in \Th$,} \\
$\jumpN{\brh} := \brh {}_{|_{K_1}} \cdot \bn_{K_1} + \brh {}_{|_{K_2}} \cdot \bn_{K_2}$
\end{tabular}
\end{cases}
\end{align*}
where~$\bn_{K}$ denotes the outward-pointing unit normal vector on~$\partial K$.
For any positive real number~$s$, we define the following broken Sobolev space:
\begin{equation*}
H^s(\Th) := \{v \in L^2(\Omega) \ : \ v_{|_{K}} \in H^s(K) \ \forall K \in \Th\}.
\end{equation*}

The proposed hybridizable discontinuous Galerkin semidiscrete formulation for the Westervelt equation in~\eqref{EQN::MODEL-PROBLEM-MIXED} is\footnote{In this work, the vector variable~$\bvh$ approximates~$\nabla \psi$, whereas it typically approximates~$-\nabla \psi$ in elliptic problems. 
As a consequence, there are some slight differences in the standard HDG tools used in the coming sections.}: 
for all~$t \in (0, T]$, find~$(\psih(\cdot, t), \bvh(\cdot, t), \lambdah(\cdot, t)) \in \Sp \times \Qp \times \Mp$ 
such that the following equations are satisfied for all~$K \in \Th$:
\begin{subequations}
\label{EQN::SEMI-DISCRETE}
\begin{align}
\int_K \bvh \cdot \brh \dx  = \int_{\partial K} \psiFlux \brh \cdot \bn_K \dS - \int_K \psih \nabla \cdot \brh \dx & \qquad \hspace*{\fill} \forall \brh \in \Qp, \\
\begin{split}
\int_K (1 + 2 k \psiht) \psihtt \wh \dx - \int_{\partial K} \wh (c^2 \vFlux + \delta \vFluxt)\cdot \bn_K \dS \\
+ \int_K (c^2 \bvh + \delta \bvht) \cdot \nabla \wh \dx = 0 & \qquad \hspace*{\fill} \forall \wh \in \Sp,
\end{split}
\end{align}
the following compatibility equation is satisfied for all~$F \in \FhI$:
\begin{equation}
\label{EQN::HDG-3}
\int_{F} \muh \jumpN{\vFlux} \dS  = 0 \qquad \forall \muh \in \Mp,
\end{equation}
and appropriate discrete initial conditions, which will be specified in Section~\ref{SEC::INITIAL_DATA_CHOICE}, are prescribed.
\end{subequations}

The \emph{numerical fluxes}~$\psiFlux$ and~$\vFlux$ are approximations of the traces of~$\psih$ and~$\bvh$ on~$\Fh$, and are defined as follows (see~\cite[\S 3.2]{Cockburn_Guzman_Wang_2009}):
\begin{equation}
\label{EQN::NUMERICAL-FLUXES}
\psiFlux := 
\begin{cases}
\lambdah & \text{ if } F \in \FhI, \\
0 & \text{ if } F \in \FhD, 
\end{cases}
\qquad 
\vFlux := 
\begin{cases}
\bvh - \tau (\psih - \lambdah) \bn_K & \text{ if } F \in \FhI, \\
\bvh - \tau \psih \bn_{\Omega}  & \text{ if } F \in \FhD,
\end{cases}
\end{equation}
for some piecewise constant function~$\tau$ that is double valued on~$\FhI$ and single valued on~$\FhD$. In particular, we consider the single-facet choice introduced in~\cite[Eq.~(1.6)]{Cockburn_Dong_Guzman_2008}, i.e., given a strictly positive constant~$\bar{\tau}$, we define~$\tau$ on each element~$K \in \Th$ as
\begin{equation}
\label{EQN::SINGLE-FACET}
\tau{}_{|_{\partial K}} := \begin{cases}
0 & \text{ on } \partial K \backslash F_K^{\tau}, \\
\bar{\tau} & \text{ on } F_K^{\tau},
\end{cases}
\end{equation}
for a fixed facet~$F_K^{\tau}$ of~$K$.
The compatibility condition~\eqref{EQN::HDG-3} implies that the normal component of~$\vFlux$ is single valued on the mesh skeleton, i.e., $\jumpN{\vFlux} = 0$ on~$\FhI$. 

We define the following inner products:
\begin{equation*}
(u, v)_{\Th} := \sum_{K \in \Th} (u, v)_{K}, \qquad
(u, v)_{\partial \Th} := \sum_{K \in \Th} (u, v)_{\partial K}, \ \qquad 
(u, v)_{(\partial \Th)^{\mathcal{I}}} := \sum_{K \in \Th} (u, v)_{(\partial 
K)^{\mathcal{I}}}.
\end{equation*}

Given bases for the spaces in~\eqref{EQN::DG-SPACES}, let~$M$, $\bM$, $B$, $S$, $E$, $F$, and~$G$ be the matrix representations of the following bilinear forms:\footnote{These bilinear forms are also well defined for sufficiently regular functions. }
\begin{align*}
m_h(\psih, \wh) & := ( \psih,  \wh)_{\Th}  & & \forall \psih, \wh \in \Sp,\\
\bm_h(\bvh, \brh) & := (\bvh, \brh)_{\Th} & & \forall \bvh, \brh \in \Qp, \\
b_h(\psih, \brh) &  := (\psih, \nabla \cdot \brh)_{\Th}  & & \forall (\psih, \brh) \in \Sp \times \Qp,\\
s_h(\psih, \wh) & := (\tau \psih, \wh)_{\partial \Th} & & \forall \psih, \wh \in \Sp, \\
e_h(\lambdah, \brh) & := - (\lambdah, \jumpN{\brh})_{\FhI} & & \forall (\lambdah, \brh) \in \Mp \times \Qp, \\
f_h(\lambdah, \wh) & := - (\tau \lambdah, \wh)_{(\partial \Th)^{\mathcal{I}}} & & \forall (\lambdah, \wh) \in \Mp \times \Sp,\\
g_h(\lambdah, \muh) & := (\tau \lambdah, \muh)_{(\partial \Th)^{\mathcal{I}}} & & \forall \lambdah, \muh \in \Mp,
\end{align*}
and~$\calN_h(\cdot, \cdot)$ be the vector representation of the nonlinear operator
\begin{equation*}
n_h(\phih; \thetah, \wh) := \sum_{K \in \Th} \int_K (1 + 2k \phih) \thetah \wh \dx \qquad \hspace{\fill} \forall \phih, \thetah, \wh \in \Sp.
\end{equation*}
Then, after summing up over all the elements~$K \in \Th$, replacing the numerical fluxes by their definition in~\eqref{EQN::NUMERICAL-FLUXES}, and using the following notation:
\begin{equation}
\label{EQN::TILDA-NOTATION}
\tlambdah = \lambdah + \frac{\delta}{c^2} \lambdaht, \quad \tpsih = \psih + \frac{\delta}{c^2} \psiht, \quad \text{and} \quad \tbvh = \bvh + \frac{\delta}{c^2} \bvht,
\end{equation}
the semidiscrete HDG formulation~\eqref{EQN::SEMI-DISCRETE} can be written in 
operator form as follows: for all~$t \in (0, T]$, find~$(\psih(\cdot, t), 
\bvh(\cdot, t), \lambdah(\cdot, t)) \in \Sp \times \Qp \times \Mp$ 
such that
\begin{subequations}
\label{EQN::SEMI-DISCRETE-OPERATORS}
\begin{align}
\label{EQN::SEMI-DISCRETE-OPERATORS-1}
\bm_h(\bvh, \brh) + b_h(\psih, \brh) + e_h(\lambdah, \brh)  & = 0 & & \forall \brh \in \Qp, \\
\label{EQN::SEMI-DISCRETE-OPERATORS-2}
n_h(\psiht, \psihtt, \wh) 
- c^2b_h(\wh, \tbvh) + c^2 s_h(\tpsih, \wh) + c^2 f_h(\tlambdah, \wh) & = 0 &  &  \forall \wh \in \Sp, \\
\label{EQN::SEMI-DISCRETE-OPERATORS-3}
-e_h(\muh, \bvh) + f_h(\muh, \psih) + g_h(\lambdah, \muh) & = 0 & &  \forall \muh \in \Mp,
\end{align}
\end{subequations}
which leads to the following system of nonlinear ordinary differential equations (ODEs):
\begin{subequations}
\begin{align*}
\bM \Vh + B \Psih + E \Lambdah & = 0, \\
\begin{split}
\calN_h\left(\frac{d}{dt}{\Psih},  \frac{d^2}{dt^2}\Psih\right) 
-c^2 B^T \tVh 
+ c^2 S\tPsih + c^2 F \tLambdah & = 0,
\end{split}
\\
-E^T \Vh + F^T \Psih +  G \Lambdah & = 0.
\end{align*}
\end{subequations}

\begin{remark}[Structure of~$\calN_h(\cdot, \cdot)$]
\label{REM::STRUCTURE-N}
Since the nonlinear operator~$\calN_h(\cdot, \cdot)$ is linear with respect to its second argument, 
it can also be written as~$\calN_h(\frac{d}{dt} \Psih, \frac{d^2}{dt^2}\Psih) = N_h(\frac{d}{dt}\Psih) \frac{d^2}{dt^2}\Psih$, for some block diagonal matrix~$N_h = N_h(\frac{d}{dt}\Psih)$.
\eremk
\end{remark}

\begin{remark}[Linear case]
Setting~$\delta = 0$ and~$k = 0$ in  
the semidiscrete 
formulation~\eqref{EQN::SEMI-DISCRETE-OPERATORS}, the conservative HDG method in~\cite{Cockburn-ETAL-2018} for the linear acoustic wave equation is recovered.
\eremk
\end{remark}

\section{Linearized semidiscrete HDG formulation~\label{SECT::LINEARIZED-PROBLEM}}
As an intermediate step for the asymptotic and convergence analysis of the semidiscrete HDG formulation~\eqref{EQN::SEMI-DISCRETE-OPERATORS} for the Westervelt equation, 
we analyze an auxiliary linearized problem with damping parameter~$\delta \geq 0$ and a variable coefficient. We first make some assumptions on the data of the linearized problem. 
In Section~\ref{SUBSECT::WELL-POSEDNESS}, we show some low- and high-order energy stability estimates and discuss the existence of a unique semidiscrete solution.
In Section~\ref{SECT::ERROR-SEMI-DISCRETE-LINEARIZED}, we show some \emph{a priori} error bounds in the energy norms. 
The choice of the discrete initial conditions is discussed in Section~\ref{SEC::INITIAL_DATA_CHOICE}. Optimal~$h$-convergence rates for the error in the energy norms are proven in Section~\ref{SECT::h-convergence}.

We consider the following auxiliary, potentially damped, perturbed linear wave equation:
\begin{equation}
\label{EQN::CONTINUOUS-LINEARIZED}
\begin{cases}
(1 + 2k\alpha
) \pdt{2}{\psi} - c^2 \nabla \cdot \bv - \delta\nabla \cdot (\pdt{1}\bv) = \varphi & \text{ in } \QT,\\
\bv = \nabla \psi + \bupsilon & \text{ in }\QT, \\
\psi = 0 & \text{ on } \partial \Omega \times (0, T), \\
\psi = \psi_0, \qquad \pdt{1}\psi = \psi_1 & \text{ on } \Omega \times \{0\},
\end{cases}
\end{equation}
for some given functions~$\varphi : \QT \rightarrow \IR$, $ \alpha{: \QT \rightarrow \IR}$, and $\bupsilon{:\QT \rightarrow \IR^d}$. 
The force term~$\varphi$ will be used to represent the consistency error due to the approximation of~$\dpt \psi$~by $\alpha$.
The perturbation function~$\bupsilon$ will be used in 
Theorem~\ref{THM::ENERGY-STABILITY} to 
represent the error resulting from the low-order~$L^2{(\Omega)}$-orthogonality 
properties of the HDG projection {in~\eqref{EQN::HDG_PROJECTION}} of~$\bv$. This 
will be useful in proving the error bounds of 
Theorem~\ref{THM:ERROR_BOUND_THEOREM}; see the system of error 
equations~\eqref{EQN::ERROR-ETA-EQUATIONS}.
Such an error term also appears in the  
analysis of the HDG method for the linear acoustic wave equation in~\cite[Lemma 3.1]{Cockburn_Quenneville-belair_2014}.

We consider the following semidiscrete HDG formulation for the auxiliary problem in~\eqref{EQN::CONTINUOUS-LINEARIZED}: for all~$t \in (0, T]$, find~$(\psih(\cdot, t), \bvh(\cdot, t), \lambdah(\cdot, t)) \in \Sp \times \Qp \times \Mp$ such that
\begin{subequations}
\label{EQN::SEMI-DISCRETE-LINEARIZED}
\begin{align}
\label{EQN::SEMI-DISCRETE-LINEARIZED-1}
\bm_h(\bvh, \brh) + b_h(\psih, \brh) + e_h(\lambdah, \brh)  & = (\bupsilon, \brh)_{\Omega} & & \forall \brh \in \Qp, \\
\nonumber
m_h((1 + 2k\alpha_h) \psihtt, \wh)  - c^2b_h(\wh, \tbvh) + c^2 s_h(\tpsih, \wh) & \\
\label{EQN::SEMI-DISCRETE-LINEARIZED-2}
   + c^2 f_h(\tlambdah, \wh)  & = (\varphi, \wh)_{\Omega} & & \forall \wh \in \Sp, 
\\
\label{EQN::SEMI-DISCRETE-LINEARIZED-3}
- e_h(\muh, \bvh) + f_h(\muh, \psih) + g_h(\lambdah, \muh) & = 0 & &  \forall \muh \in \Mp,
\end{align}
\end{subequations}
where~$\alphah$ is a discrete approximation of~$\alpha$.
To complete the system of differential equations~\eqref{EQN::SEMI-DISCRETE-LINEARIZED}, it is necessary to compute appropriate discrete initial conditions from the initial data of the continuous problem~$\psi_0$, $\psi_1$. 
A suitable choice for these initial conditions is essential in the error analysis below. We discuss our choice for the discrete initial conditions in Section~\ref{SEC::INITIAL_DATA_CHOICE}.

To show the well-posedness of the
semidiscrete problem~\eqref{EQN::SEMI-DISCRETE-LINEARIZED}, we make the following assumptions on 
the semidiscrete coefficient~$\alpha_h$, the forcing function~$\varphi$, and the perturbation function~$\bupsilon$.
\begin{assumption}\label{Assumption_nondeg}
Let $T>0$. We assume that~$\varphi \in H^{1}(0, T; L^2(\Omega))$, $\bupsilon \in W^{3, 1}(0,T; L^2(\Omega)^d)$, and the coefficient $\alpha_h \in H^{1}(0,T; \Sp)$ is non degenerate, i.e., there exist constants $\ulal$, $\olal>0$ independent of~$h$ and~$\delta$, such that
\begin{equation}
\label{EQN::NON-DEGENERACY-ALPHA}
	0 < 1-2|{k}|\ulal \leq 1+2{k}\alpha_h(\bx,t) \leq 1+2|{k}|\olal \quad  \forall (\bx,t) \in \Omega \times (0,T).
\end{equation}

Furthermore, we assume that there exist constants~$0< \gamma_0 < {\sigma_0} < 1$ independent of $h$ and the damping parameter~$\delta$ such that 
\begin{equation} 
\label{EQN::ENERGY-ASSUMPTION}
\frac{|k|}{1 - 2|k|\ulal} \Norm{\alphaht}{L^1(0, T; L^{\infty}(\Omega))}+ \frac{\gamma_0}{2} \leq \frac{{\sigma_0}}{2}.
\end{equation}
\end{assumption}

\begin{remark}[Linearization argument]
It is fairly common in the (numerical) analysis of quasilinear wave equations to combine a linearized problem with nondegeneracy assumptions on the variable coefficient.
Such assumptions are then shown to be verified by the solution to the nonlinear problem
by using a fixed-point strategy; 
see Theorem~\ref{THM::FIXED-POINT} below. See also~\cite[Thm.~3]{Antonietti_ETAL_2020}, \cite[Thm.~6.1]{Nikolic_Wohlmuth_2019}, and \cite[Thm.~4.1]{Meliani_Nikolic_2023} for similar arguments. 
\eremk
\end{remark}

\subsection{Well-posedness and energy 
estimates \label{SUBSECT::WELL-POSEDNESS}}
In this section, we discuss the existence and uniqueness of the solution to the semidiscrete formulation~\eqref{EQN::SEMI-DISCRETE-LINEARIZED}, and derive some low- and high-order energy stability estimates.

We first write the semidiscrete formulation~\eqref{EQN::SEMI-DISCRETE-LINEARIZED} in matrix form as
\begin{subequations}
\label{EQN::LIN-ODE}
\begin{align}
\label{EQN::LIN-ODE-1}
\bM \Vh + B \Psih + E \Lambdah & = \bGamma, \\
\label{EQN::LIN-ODE-2}
\begin{split}
{N_h}(\alpha_h)  \frac{d^2}{dt^2}\Psih
-c^2 B^T \tVh 
+ c^2 S\tPsih + c^2 F \tLambdah  & = \Phi,
\end{split}
\\
\label{EQN::LIN-ODE-3}
-E^T \Vh + F^T \Psih +  G \Lambdah & = 0,
\end{align}
\end{subequations}
where~$\Phi$ and~$\boldsymbol\Gamma$ are{, respectively,} the vector representations of the terms in~\eqref{EQN::SEMI-DISCRETE-LINEARIZED-1} and~\eqref{EQN::SEMI-DISCRETE-LINEARIZED-2} involving~$\phi$ and~$\bupsilon$. 
The matrix~${N_h} = {N_h}(\alphah)$, defined in Remark~\ref{REM::STRUCTURE-N}, is symmetric positive definite on account of the nondegeneracy assumption made in~\eqref{EQN::NON-DEGENERACY-ALPHA} on~$\alphah$. 
From~\eqref{EQN::LIN-ODE-1} and~\eqref{EQN::LIN-ODE-3}, we deduce that
\begin{equation}
\label{EQN::LINEAR-RELATION-VARIABLES}
\begin{pmatrix}
\bM & E \\
-E^T & G
\end{pmatrix}
\begin{pmatrix}
\tVh \\
\tLambdah
\end{pmatrix}
= \begin{pmatrix}
-B \tPsih + \widetilde{\bGamma}\\
-F^T \tPsih
\end{pmatrix}.
\end{equation} 
Since~$\bM$ and~$G$ are symmetric positive definite matrices, the block matrix on the left-hand side of~\eqref{EQN::LINEAR-RELATION-VARIABLES} is nonsingular. 
Therefore, $\tVh$ and~$\tLambdah$ can be expressed in terms of~$\tPsih$ and {$\widetilde{\bGamma}$} through~\eqref{EQN::LINEAR-RELATION-VARIABLES}. 
This implies that 
the ODE system~\eqref{EQN::LIN-ODE} can be reduced to a second-order linear ODE system 
involving only~$\Psih$ by multiplying equation~\eqref{EQN::LIN-ODE-2} by the matrix~${N_h}(\alphah)^{-1}$.  
{If} Assumption~\ref{Assumption_nondeg} holds, 
classical ODE theory~(see, e.g., \cite[Thm.~1.8]{Agarwal_Oregan_2001}) predicts 
the existence of a unique solution~$\psih \in W^{3,1}(0,T; \Sp)$. 
Moreover, through \eqref{EQN::LIN-ODE-1} and~\eqref{EQN::LIN-ODE-3}, we obtain that $\bvh \in W^{3,1}(0,T; \Qp)$ and~$\lambdah \in W^{3, 1}(0, T; \Mp)$. 
In the analysis below, the embedding~$W^{3, 1}(0, T) \hookrightarrow C^{2}([0, T])$ is of utmost relevance.

We derive low- and high-order energy stability estimates for the semidiscrete formulation~\eqref{EQN::SEMI-DISCRETE-LINEARIZED}.

\begin{theorem}[Energy estimates for the discrete linearized problem] \label{THM::ENERGY-STABILITY} 
Let~$T>0$, $c > 0$, and~$\delta \geq 0$.
Assume that the semidiscrete-in-space coefficient~$\alphah$, the 
forcing function~$\varphi$, and the perturbation function~$\bupsilon$ satisfy Assumption~\ref{Assumption_nondeg}.
Then, the solution to 
semidiscrete formulation~\eqref{EQN::SEMI-DISCRETE-LINEARIZED} satisfies the following energy stability estimates:
\begin{subequations}
\begin{align}
\nonumber
\sup_{t \in (0, T)} \calE_h^{(0)}[\psih, \bvh, \lambdah](t) \leq (1 - \sigma_0)^{-1} & \Big( \calE_h^{(0)}[\psih, \bvh, \lambdah](0) + \frac{{T}}{2\gamma_0(1 - 2|k|\ulal)}\Norm{\varphi}{L^2(0, T; L^2(\Omega))}^2 \\
\label{EQN::ENERGY-STABILITY-LINEARIZED-1}
& + {\Big(\frac{\delta}{4}  + {\frac{c^2T}{2\sigma_0}} \Big)}\Norm{\dpt \bupsilon}{L^2(0, T;L^2(\Omega)^d)}^2\Big),\\
\nonumber
\sup_{t \in (0, T)} \calE_h^{(1)}[\psih, \bvh, \lambdah](t) \leq (1 - {\sigma_0})^{-1} & \Big(\calE_h^{(1)}[\psih, \bvh, \lambdah](0) + \frac{{T}}{2\gamma_0(1 - 2|k|\ulal)}\Norm{\dpt \varphi}{L^2(0, T; L^2(\Omega))}^2 \\
\label{EQN::ENERGY-STABILITY-LINEARIZED-2}
& + {\Big(\frac{\delta}{4} + \frac{c^2 T}{2{\sigma_0}}\Big)} \Norm{\dptt \bupsilon}{L^2(0, T;L^2(\Omega)^d)}^2\Big),
\end{align}
\end{subequations}
where~$\sigma_0$ is the constant in the smallness assumption~\eqref{EQN::ENERGY-ASSUMPTION}, and the discrete energy functionals~$\calE_h^{(0)}{[\cdot, \cdot, \cdot]}(t)$ and $\calE_h^{(1)}{[\cdot, \cdot, \cdot]}(t)$ are given by
\begin{align*}
\calE_h^{(0)}[\psih, \bvh, \lambdah](t) := & \frac12 \Norm{\sqrt{1+{2}k\alphah} \psiht}{L^2(\Omega)}^2 \\
& + \frac{c^2}{2} \left(\Norm{\bvh}{L^2(\Omega)^d}^2 + \Norm{\tau^{\frac12} (\lambdah - \psih)}{L^2\left((\partial \Th)^{\mathcal{I}}\right)}^2 + \Norm{\tau^{\frac12} \psih}{L^2\left((\partial \Th)^{\calD}\right)}^2\right),\\
\calE_h^{(1)}[\psih, \bvh, \lambdah](t) := & \frac12 \Norm{\sqrt{1+2k\alphah} \psihtt}{L^2(\Omega)}^2 \\
& + \frac{c^2}{2} \left(\Norm{\bvht}{L^2(\Omega)^d}^2 + \Norm{\tau^{\frac12} (\lambdaht - \psiht)}{L^2\left((\partial \Th)^{\mathcal{I}}\right)}^2 + \Norm{\tau^{\frac12} \psiht}{L^2\left((\partial \Th)^{\calD}\right)}^2\right).
\end{align*}
\end{theorem}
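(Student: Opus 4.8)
The plan is to derive the low-order estimate \eqref{EQN::ENERGY-STABILITY-LINEARIZED-1} by a standard energy argument, testing the semidiscrete equations \eqref{EQN::SEMI-DISCRETE-LINEARIZED} with the time derivatives of the discrete unknowns, and then to obtain the high-order estimate \eqref{EQN::ENERGY-STABILITY-LINEARIZED-2} by differentiating the whole system once in time and repeating the argument. Concretely, for the low-order bound I would take $\brh = c^2 \pdt{1}{\bvh}$ in \eqref{EQN::SEMI-DISCRETE-LINEARIZED-1}, $\wh = \psiht$ in \eqref{EQN::SEMI-DISCRETE-LINEARIZED-2}, and $\muh = c^2 \lambdaht$ in \eqref{EQN::SEMI-DISCRETE-LINEARIZED-3}, and add the three identities. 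The symmetry relations $b_h$ vs. $b_h(\wh,\cdot)$, $e_h$ vs. $-e_h(\muh,\cdot)$, and $f_h$ vs. $f_h(\muh,\cdot)$, together with the definitions of $s_h$ and $g_h$, are designed so that most coupling terms telescope into a time derivative of the quadratic form that is exactly $\calE_h^{(0)}[\psih,\bvh,\lambdah](t)$: the $m_h((1+2k\alpha_h)\psihtt,\psiht)$ term produces $\tfrac12\tfrac{d}{dt}\Norm{\sqrt{1+2k\alphah}\,\psiht}{L^2(\Omega)}^2$ up to the coefficient-derivative remainder $-k\,m_h(\alphaht\,\psiht,\psiht)$, and the jump/stabilization terms reassemble the $\tau$-weighted boundary norms.

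Two structural features require care. First, the $\delta$-terms: writing everything through the tilde variables of \eqref{EQN::TILDA-NOTATION}, the contributions $c^2 b_h(\wh,\tbvh)$, $c^2 s_h(\tpsih,\wh)$, $c^2 f_h(\tlambdah,\wh)$ split into an undamped part (contributing to $\tfrac{d}{dt}\calE_h^{(0)}$) and a part proportional to $\delta$ that, after the algebraic elimination encoded in \eqref{EQN::LINEAR-RELATION-VARIABLES}, is nonnegative — it yields a damping term of the form $\delta\,(\text{positive quadratic in } \pdt{1}{\bvh}, \pdt{1}{\lambdah}-\pdt{1}{\psih}, \dots)$ that I would simply discard after moving it to the left-hand side, so the final bound does not degenerate as $\delta\to 0^+$. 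Second, the perturbation $\bupsilon$ enters only through $(\bupsilon,\brh)_\Omega = c^2(\bupsilon,\pdt{1}{\bvh})_\Omega$; I would rewrite this, if needed, as $c^2\tfrac{d}{dt}(\bupsilon,\bvh)_\Omega - c^2(\pdt{1}{\bupsilon},\bvh)_\Omega$ and absorb it, or directly Cauchy–Schwarz and Young it against $\tfrac{c^2}{2\gamma}\Norm{\pdt{1}{\bupsilon}}{}^2 + \tfrac{\gamma}{2}\Norm{\bvh}{}^2$; the bookkeeping of which part goes into the $\delta/4$ coefficient and which into the $c^2T/(2\sigma_0)$ coefficient is the reason the right-hand side has that particular shape.

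After assembling, I integrate in time over $(0,t)$, bound the forcing term $(\varphi,\psiht)_\Omega$ via Young's inequality using the lower bound $1-2|k|\ulal$ on the coefficient (this is where the factor $\tfrac{1}{2\gamma_0(1-2|k|\ulal)}$ and, after a further $L^2(0,T)$-in-time estimate of $\sup_t$, the factor $T$ appear), and collect the coefficient-derivative remainder $|k|\int_0^t |m_h(\alphaht\psiht,\psiht)|\,ds \le \tfrac{|k|}{1-2|k|\ulal}\Norm{\alphaht}{L^1(0,T;L^\infty(\Omega))}\sup_s \calE_h^{(0)}(s)$. Combining this with the $\tfrac{\gamma_0}{2}\sup$ term coming from the $\bupsilon$-estimate, the smallness assumption \eqref{EQN::ENERGY-ASSUMPTION} gives a prefactor $\le \tfrac{\sigma_0}{2}\cdot 2\sup_t\calE_h^{(0)} = \sigma_0\sup_t\calE_h^{(0)}$ on the right, which I move to the left to get $(1-\sigma_0)\sup_t\calE_h^{(0)} \le (\text{initial energy} + \text{data terms})$, i.e. exactly \eqref{EQN::ENERGY-STABILITY-LINEARIZED-1}. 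For \eqref{EQN::ENERGY-STABILITY-LINEARIZED-2} I differentiate \eqref{EQN::SEMI-DISCRETE-LINEARIZED} in $t$ — legitimate since $\psih\in W^{3,1}(0,T;\Sp)$ and $\varphi\in H^1$, $\bupsilon\in W^{3,1}$ — noting that $\pdt{1}$ commutes with all the (time-independent) bilinear forms, so the differentiated system has the same structure with $\psiht,\bvht,\lambdaht,\pdt{1}{\varphi},\pdt{1}{\bupsilon}$ in place of the originals, except for the extra term $2k\,m_h(\alphaht\,\psihtt,\wh)$; testing with $\wh=\psihtt$ this is controlled exactly as the analogous remainder above, and the argument goes through verbatim.

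The main obstacle is handling the $\delta$-terms cleanly: one must verify that after eliminating $\tVh,\tLambdah$ via \eqref{EQN::LINEAR-RELATION-VARIABLES} the full damping contribution is a genuine nonnegative quadratic form (uniformly in $\delta,h$), so that it can be dropped without leaving any $\delta$-dependent factor in front of the energy — this is what makes the estimate $\delta$-robust and is the crux of the asymptotic-preserving property; the compatibility equation \eqref{EQN::SEMI-DISCRETE-LINEARIZED-3} and its time derivative are essential here to make the $\lambdah$-coupling terms reassemble correctly.
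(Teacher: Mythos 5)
Your overall strategy (energy testing with time-derivative test functions, absorbing the coefficient-derivative remainder via the smallness assumption~\eqref{EQN::ENERGY-ASSUMPTION}, dropping a nonnegative $\delta$-damping term, and repeating the argument for the time-differentiated system to get~\eqref{EQN::ENERGY-STABILITY-LINEARIZED-2}) is indeed the paper's strategy, but the concrete recipe you give does not produce the cancellations you claim, and that is where the actual content of the proof lies. The paper does not test~\eqref{EQN::SEMI-DISCRETE-LINEARIZED-1} with $c^2\bvht$: it first differentiates~\eqref{EQN::SEMI-DISCRETE-LINEARIZED-1} in time, multiplies by $c^2$, and tests with the tilde variable $\tbvh$, and it uses~\eqref{EQN::SEMI-DISCRETE-LINEARIZED-3} in its tilde form (the equation plus $\delta c^{-2}$ times its time derivative) tested with $\lambdaht$; see the auxiliary system~\eqref{EQN::AUX-LINEARIZED}. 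With your choice the coupling terms do \emph{not} telescope: summing your three identities leaves $c^2\big[b_h(\psih,\bvht)-b_h(\psiht,\tbvh)\big]+c^2\big[e_h(\lambdah,\bvht)-e_h(\lambdaht,\bvh)\big]$, which is neither zero nor a total time derivative. The cancellation of $b_h(\psiht,\tbvh)$ and $e_h(\lambdaht,\tbvh)$ against the second and third equations is precisely what time-differentiating the flux equation before testing provides.

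This defect propagates to the two points you yourself flag as delicate. First, the $\delta$-contribution of~\eqref{EQN::SEMI-DISCRETE-LINEARIZED-2} alone, namely $-\delta\, b_h(\psiht,\bvht)+\delta\, s_h(\psiht,\psiht)+\delta\, f_h(\lambdaht,\psiht)$, is not sign-definite; it reassembles into the nonnegative form $\delta\big(\Norm{\bvht}{L^2(\Omega)^d}^2+\Norm{\tau^{1/2}(\lambdaht-\psiht)}{L^2((\partial\Th)^{\mathcal{I}})}^2+\Norm{\tau^{1/2}\psiht}{L^2((\partial\Th)^{\calD})}^2\big)$ only after adding the $\delta$-pieces supplied by the time-differentiated first equation tested with $\tbvh$ and by the time derivative of~\eqref{EQN::SEMI-DISCRETE-LINEARIZED-3} tested with $\lambdaht$ (cf.~\eqref{EQN::LOW-ORDER-IDENTITY} and~\eqref{EQN::AUX-IDENTITIES-LOW-3}); no elimination via~\eqref{EQN::LINEAR-RELATION-VARIABLES} is involved. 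Second, with your choice the perturbation enters as $c^2(\bupsilon,\bvht)_{\Omega}$; treating it by integration by parts in time or by a direct Young inequality produces terms of the type $\Norm{\bupsilon}{L^\infty(0,T;L^2(\Omega)^d)}$ or $\Norm{\bupsilon}{L^2(0,T;L^2(\Omega)^d)}$, which do not appear in~\eqref{EQN::ENERGY-STABILITY-LINEARIZED-1} --- the stated bound contains only $\dpt\bupsilon$, as it must, since a constant-in-time $\bupsilon$ contributes nothing. Differentiating the first equation before testing makes the data term $(\dpt\bupsilon,\tbvh)_{\Omega}$ from the outset: its $\delta$-part is absorbed by the damping term and its $c^2$-part yields the $c^2T/(2\sigma_0)$ contribution. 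Once you replace your test-function choice by the paper's, the rest of your outline (Young with $\gamma_0$, H\"older in time for the factor $T$, absorption with $(1-\sigma_0)^{-1}$, and the time-differentiated system with the extra term $2k\, m_h(\alphaht\,\psihtt,\wh)$ for the high-order bound) coincides with Appendices~\ref{APP:ProofLowerOrder} and~\ref{APP:ProofHigherOrder}.
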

\begin{proof}

The proofs of the energy estimates in~\eqref{EQN::ENERGY-STABILITY-LINEARIZED-1} and~\eqref{EQN::ENERGY-STABILITY-LINEARIZED-2} are postponed to Appendices~\ref{APP:ProofLowerOrder} and~\ref{APP:ProofHigherOrder}, respectively.
\end{proof}

\begin{remark}[Regularity of~$\bupsilon$]
As can be seen from estimates~\eqref{EQN::ENERGY-STABILITY-LINEARIZED-1} and~\eqref{EQN::ENERGY-STABILITY-LINEARIZED-2}, it is sufficient to have $\bupsilon\in H^2(0,T;L^2(\Omega)^d)$. 
However, this would degrade the regularity to be expected from the solution to the semidiscrete problem~\eqref{EQN::LIN-ODE}. In particular, we would only get that~$\bvh \in H^2(0,T; \Qp)$ and~$\lambdah \in H^2(0, T; \Mp)$. Since $\bupsilon$ is only an auxiliary function used to 
represent the error introduced by the low-order~$L^2(\Omega)$-orthogonality of the HDG projection used in the error analysis (see Theorem~\ref{THM:ERROR_BOUND_THEOREM} below), we 
assume~$\bupsilon \in W^{3,1}(0,T;L^2(\Omega)^d)$, thus retaining the expected regularity of~$\bvh$ and~$\lambdah$ when~$\bupsilon = 0$ as in the original problem~\eqref{EQN::SEMI-DISCRETE-OPERATORS}. 
\end{remark}

Estimates~\eqref{EQN::ENERGY-STABILITY-LINEARIZED-1} and~\eqref{EQN::ENERGY-STABILITY-LINEARIZED-2} show boundedness of the energy of the semidiscrete solution with respect to  
the initial energies, the forcing 
{function}~$\varphi$, and {the perturbation function}~$\bupsilon$. 
In order to show that these constitute indeed stability results, we need to show that the initial discrete energies, $\calE_h^{(0)}{[\psih, \bvh, \lambdah]}(0)$ and~$\calE_h^{(1)}{[\psih, \bvh, \lambdah]}(0)$, remain bounded uniformly in~$h$. We prove the stability result for the nonlinear problem in Lemma~\ref{LEMMA::ENERGY-STABILITY}.

\begin{remark}[Stabilization parameter]
In order to obtain the energy stability estimates {in}~\eqref{EQN::ENERGY-STABILITY-LINEARIZED-1} and~\eqref{EQN::ENERGY-STABILITY-LINEARIZED-2}, 
we only require the stabilization parameter~$\bar{\tau}$ 
in~\eqref{EQN::SINGLE-FACET} to be strictly positive.
Moreover, there are no polynomial inverse estimates involved in the proof of Theorem~\ref{THM::ENERGY-STABILITY}.
\eremk
\end{remark}

\subsection{\emph{A priori} error estimates \label{SECT::ERROR-SEMI-DISCRETE-LINEARIZED}}
In this section, we carry out an~\emph{a priori} error analysis for the semidiscrete formulation~\eqref{EQN::SEMI-DISCRETE-LINEARIZED}.
To do so, we first recall the properties of
some special HDG projections. 
For all~$\epsilon > 0$, let~$\piM : H^{\frac12 + \epsilon}(\Th) \rightarrow \Mp$ be the~$L^2$-orthogonal projection in~$\Mp$, defined for all~$u \in H^{\frac12 + \epsilon}(\Th)$ as
\begin{equation}\label{EQN::FACET_PROJECTION}
(\piM u  - u, \muh)_{{(\partial \Th)^{\mathcal{I}}}} = 0 \qquad \forall \muh \in \Mp,
\end{equation}
and let~$\piHDG := (\piS, \piQ) : H^{\frac12 + \epsilon}(\Th) \times H^{\frac12 + \epsilon}(\Th)^d \rightarrow \Sp \times \Qp$ 
be the HDG projection in~\cite[Eq.~(2.1)]{Cockburn_Gopalakrishnan_Sayas_2010},  defined for all~$(\psi, \bv) \in H^{\frac12 + \epsilon}(\Th) \times H^{\frac12 + \epsilon}(\Th)^d $ and all~$K \in \Th$ as
\begin{subequations}\label{EQN::HDG_PROJECTION}
\begin{align}
(\piQ \bv - \bv, \brh)_{K} & = 0 \qquad \qquad \forall \brh \in \Pp{p-1}{K}^d, \\
(\piS \psi - \psi, \wh)_{K} & = 0 \qquad \qquad \forall \wh \in \Pp{p-1}{K}, \\
\left((\widehat{\piQ \bv} - \bv) \cdot \bn_{K}, \muh\right)_{f} & = 0 \qquad \qquad \forall \text{ facets } F \subset \partial K,\  \forall \muh \in \Pp{p}{F},
\end{align}
\end{subequations}
where
\begin{equation*}
\widehat{\piQ \bv} \cdot \bn_K := \piQ \bv \cdot \bn_K - \tau (\piS \psi - \piM \psi) \qquad \text{ on }\partial K. 
\end{equation*}

Let~$(\psi, \bv)$ be the solution to the continuous Westervelt equation in~\eqref{EQN::MODEL-PROBLEM-MIXED}, and let~$(\psih, \bvh)$ be the solution to the semidiscrete formulation~\eqref{EQN::SEMI-DISCRETE-LINEARIZED} for the linearized problem~\eqref{EQN::CONTINUOUS-LINEARIZED} with~$\bupsilon = 0$ and~$\varphi = 0$. 

We define the following error functions:
\begin{subequations}
\begin{align}
\label{EQN::ERROR-FUNCTIONS}
& \epsi := \psi - \psih, & & \ev := \bv - \bvh, & & \elambda := \psi - \lambdah, \\
\label{EQN::PROJECTION-ERRORS}
& \dpsi := \piS \psi - \psi, & &\dbv := \piQ \bv - \bv, & & \dlambda := \piM \psi - \psi, \\
\label{EQN::DISCRETE-ERRORS}
& \etapsih := \piS \psi - \psi_h,  & & \etavh  := \piQ \bv - \bv_h, & & \etalambdah := \piM \psi - \lambdah,
\end{align}
\end{subequations}
and recall the approximation properties of~$\piHDG$ in~\cite[{Thm.~2.1}]{Cockburn_Gopalakrishnan_Sayas_2010}.

\begin{lemma}[Approximation properties of~$\piHDG$]\label{LEMMA::Projections_Errors}
Suppose~$p \geq 0$, $\tau_{|_{\partial K}}$ is nonnegative, and~$\tau_K^{\max} := \max \tau_{|_{\partial K}} > 0$. Then, $\piHDG(\psi, \bv) = (\piS \psi, \piQ \bv)$ is well defined. 
Furthermore, there is a constant~$C_\Pi > 0 $ independent of~$K$ and~$\tau$ such that
\begin{subequations}
\begin{align*}
\Norm{\dbv}{L^2(K)} & \leq C_{\Pi} \left(\hK^{s_{\bv} + 1} \SemiNorm{\bv}{H^{s_{\bv} + 1}(K)^{{d}}} + \hK^{s_{\psi} + 1} \tau_K^{\star} \SemiNorm{\psi}{H^{s_{\psi} + 1}(K)}\right), 
\\
\Norm{\dpsi}{L^2(K)} & \leq C_{\Pi} \Big(\hK^{s_{\psi} + 1} \SemiNorm{\psi}{H^{s_{\psi} + 1}(K)} + \frac{\hK^{s_{\bv} + 1}}{\tau_K^{\max}} \SemiNorm{\nabla \cdot \bv}{H^{s_{\bv}}(K)} \Big), 
\end{align*}
\end{subequations}
for~$s_{\psi}, s_{\bv} \in [0, p]$ {and~$(\psi, \bv) \in H^{s_{\psi} + 1}(K) \times H^{s_{\bv} + 1}(K)^d$}.  
Above, $\tau_K^{\star} := \max \tau_{|_{\partial K \backslash F^{\star}}}$, where~$F^{\star}$ is a 
{facet} of~$K$ at which~$\tau_{|_{\partial K}}$ is maximum.
\end{lemma}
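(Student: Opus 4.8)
The statement is exactly~\cite[Thm.~2.1]{Cockburn_Gopalakrishnan_Sayas_2010}, so the plan is to reprove it along the lines of that reference in two stages: first show that the local problem~\eqref{EQN::HDG_PROJECTION} determines~$(\piS\psi,\piQ\bv)$ uniquely on each element, and then obtain the bounds by combining exactness of~$\piHDG$ on polynomials of degree~$\leq p$ with a Bramble--Hilbert and scaling argument, while keeping careful track of the dependence on~$\tau$. Polynomial exactness is immediate: if~$(\psi,\bv)\in\Pp{p}{K}\times\Pp{p}{K}^d$, then~$\piM\psi=\psi$ on every facet, so all three conditions in~\eqref{EQN::HDG_PROJECTION} are satisfied by~$(\psi,\bv)$ itself, which is therefore the unique solution.

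For well-posedness I would fix~$K\in\Th$ and note that~\eqref{EQN::HDG_PROJECTION} is a square linear system for~$(\piS\psi,\piQ\bv)\in\Pp{p}{K}\times\Pp{p}{K}^d$: grouping the two volume conditions into~$d+1$ blocks of orthogonality against~$\Pp{p-1}{K}$ and the facet conditions into one block of~$\Pp{p}{F}$ equations per facet, the total number of equations is~$(d+1)\big(\dim\Pp{p-1}{K}+\dim\Pp{p}{F}\big)=(d+1)\dim\Pp{p}{K}$ by Pascal's rule, matching the number of unknowns. Hence it suffices to show that the homogeneous problem ($\psi=0$, $\bv=0$, so also~$\piM\psi=0$) has only the trivial solution~$(w,\boldsymbol q)$. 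The volume conditions give~$w\perp\Pp{p-1}{K}$ and~$\boldsymbol q\perp\Pp{p-1}{K}^d$ in~$L^2(K)$, and testing the facet conditions against~$\boldsymbol q\cdot\bn_K-\tau w\in\Pp{p}{F}$ facet by facet yields~$\boldsymbol q\cdot\bn_K=\tau w$ on~$\partial K$. Since~$\nabla\cdot\boldsymbol q\in\Pp{p-1}{K}$, integrating by parts gives, for every~$\zeta\in\Pp{p}{K}$,
\begin{equation*}
0=(\boldsymbol q,\nabla\zeta)_K=-(\nabla\cdot\boldsymbol q,\zeta)_K+\langle\tau w,\zeta\rangle_{\partial K}.
\end{equation*}
Taking~$\zeta=w$ and using~$w\perp\nabla\cdot\boldsymbol q$ gives~$\sum_{F\subset\partial K}\tau_F\Norm{w}{L^2(F)}^2=0$, so~$w$ vanishes on the facet where~$\tau_{|_{\partial K}}=\tau_K^{\max}>0$; since a polynomial in~$\Pp{p}{K}$ vanishing on a facet is the product of the affine function vanishing there with an element of~$\Pp{p-1}{K}$, orthogonality to~$\Pp{p-1}{K}$ forces~$w\equiv0$. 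Then~$\boldsymbol q\cdot\bn_K=0$ on~$\partial K$, the displayed identity with general~$\zeta$ gives~$\nabla\cdot\boldsymbol q=0$, and a standard polynomial exact-sequence argument finishes: in~$d=2$ write~$\boldsymbol q=\operatorname{curl}\phi$ with~$\phi=b_Kr$, $b_K$ the element bubble and~$r\in\Pp{p-2}{K}$, and use orthogonality to~$\operatorname{curl}\Pp{p}{K}$ together with surjectivity of the Laplacian onto~$\Pp{p-2}{K}$ to get~$\int_K b_K r^2=0$, hence~$r=0$; the case~$d=3$ is analogous.

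For the approximation estimates I would work on a reference simplex equipped with a reference stabilization: there the linear map~$(\psi,\bv)\mapsto(\piS\psi,\piQ\bv)$ is bounded on~$H^{\frac12+\epsilon}(\widehat K)\times H^{\frac12+\epsilon}(\widehat K)^d$, because each defining functional in~\eqref{EQN::HDG_PROJECTION} is a bounded linear functional on that space (the traces involved live in~$L^2$ of the faces). Combining this boundedness with the polynomial exactness noted above through the Bramble--Hilbert lemma yields the estimates on~$\widehat K$, and the affine change of variables back to~$K$ produces the powers of~$\hK$ together with the~$\tau$-factors, since the stabilization parameter is not rescaled by the affine map.

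The delicate point --- and what I expect to be the main obstacle --- is obtaining the \emph{sharp} $\tau$-dependence: a crude scaling produces~$\tau_K^{\max}$ in both bounds, whereas the claimed estimates carry the smaller factor~$\tau_K^{\star}$ multiplying~$\SemiNorm{\psi}{H^{s_\psi+1}(K)}$ in the bound for~$\dbv$ and the factor~$1/\tau_K^{\max}$ in the bound for~$\dpsi$. Following~\cite{Cockburn_Gopalakrishnan_Sayas_2010}, this requires a careful decomposition of the projection error that distinguishes a maximizing facet~$F^{\star}$ from the remaining ones --- solving an auxiliary local problem on~$K$ attached to~$F^{\star}$ and estimating the two contributions separately --- together with a check that all constants stay uniform over the admissible stabilizations~$\tau$, which may vanish on some facets provided~$\tau_K^{\max}>0$. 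In particular, for the single-facet choice~\eqref{EQN::SINGLE-FACET} one has~$\tau_K^{\star}=0$, so the bound for~$\dbv$ loses its~$\psi$-contribution and becomes optimal; this refined bookkeeping, rather than any conceptual obstacle, is where the real work lies.
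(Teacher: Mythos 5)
The paper does not prove this lemma at all: it is quoted verbatim from \cite[Thm.~2.1]{Cockburn_Gopalakrishnan_Sayas_2010}, so the intended ``proof'' is the citation. Your well-posedness part is essentially complete and matches the reference's strategy: the dimension count via Pascal's rule is right, and the uniqueness argument (normal-trace identity $\boldsymbol q\cdot\bn_K=\tau w$ on $\partial K$, the energy identity forcing $w=0$ on the facet where $\tau=\tau_K^{\max}>0$, factoring out the barycentric coordinate, and the divergence-free/zero-normal-trace exact-sequence step) is sound, at least in $d=2$.

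The gap is in the approximation estimates, which are the actual content of the lemma as it is used in the paper. Your plan --- map to a reference simplex ``equipped with a reference stabilization'', invoke boundedness of the projection there, apply Bramble--Hilbert, and scale back ``since the stabilization parameter is not rescaled'' --- cannot produce the stated bounds, because there is no fixed reference stabilization: $\tau$ is arbitrary data ranging over all nonnegative facetwise-constant functions with $\tau_K^{\max}>0$, and the operator norm of $\piHDG$ on the reference element depends on $\tau$. Indeed the claimed estimates are \emph{not} uniform in $\tau$ (the bound for $\dpsi$ degenerates as $\tau_K^{\max}\to 0$, the bound for $\dbv$ as $\tau_K^{\star}\to\infty$), so the $\tau$-dependence must be extracted explicitly, e.g.\ by the decomposition in \cite{Cockburn_Gopalakrishnan_Sayas_2010} that isolates the maximizing facet $F^{\star}$ and estimates the orthogonal complements of the $L^2$-projections in terms of the facet residuals. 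You acknowledge this yourself and defer it as ``where the real work lies'', but that step is precisely what yields $\tau_K^{\star}$ rather than $\tau_K^{\max}$ in the $\dbv$ bound and the factor $1/\tau_K^{\max}$ in the $\dpsi$ bound; without it you only obtain a cruder estimate that, for the single-facet choice~\eqref{EQN::SINGLE-FACET} (where $\tau_K^{\star}=0$), would not give the $\psi$-independent bound for $\dbv$ on which the paper's optimal rates for $\bv$ rely. As written, the proposal therefore does not establish the lemma; either carry out the facet-by-facet decomposition of \cite{Cockburn_Gopalakrishnan_Sayas_2010} in full, or simply cite it as the paper does.
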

For the single-facet choice in~\eqref{EQN::SINGLE-FACET}, we have {that}~$\tau_K^\star = 0$ and~$\tau_K^{\max} = \bar{\tau}$\, for all~$K \in \Th$. In particular, the error bound for~$\dbv$ does not depend on the regularity of~$\psi$.

The following lemma is crucial for the error analysis of HDG methods.
\begin{lemma} {For all~$(\psi, \bv) \in H^{\frac12 + \epsilon}(\Th) \times H^{1}(\Th)^d$, it holds}
\begin{equation}
\label{EQN::COMMUTATIVITY-IDENTITY}
b_h(\wh, \dbv) = s_h(\wh, \dpsi) \qquad \forall \wh \in \Sp.
\end{equation}
\end{lemma}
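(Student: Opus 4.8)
The plan is to prove \eqref{EQN::COMMUTATIVITY-IDENTITY} by an element-by-element argument: integrate by parts on each $K\in\Th$ and then exploit, in turn, the three defining orthogonality relations of the HDG projection $\piHDG=(\piS,\piQ)$ in \eqref{EQN::HDG_PROJECTION} together with the $L^2$-orthogonality of the facet projection $\piM$ in \eqref{EQN::FACET_PROJECTION}. The regularity hypotheses enter only mildly: $\psi\in H^{\frac12+\epsilon}(\Th)$ is what makes the projections $\piS\psi$, $\piM\psi$ and the relevant traces well defined, while $\bv\in H^1(\Th)^d$ is what makes $\nabla\cdot\dbv$ and the integration by parts legitimate. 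No inverse estimates are needed, and the stabilization parameter only needs to be piecewise constant.

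Concretely, I would fix $\wh\in\Sp$ and $K\in\Th$. Since $\bv|_K\in H^1(K)^d$ and $\piQ\bv$ is a polynomial, $\dbv=\piQ\bv-\bv\in H(\mathrm{div},K)$, so integration by parts gives $(\wh,\nabla\cdot\dbv)_K=(\wh,\dbv\cdot\bn_K)_{\partial K}-(\nabla\wh,\dbv)_K$. Because $\nabla\wh|_K\in\Pp{p-1}{K}^d$, the volume relation $(\piQ\bv-\bv,\brh)_K=0$ for $\brh\in\Pp{p-1}{K}^d$ kills the last term, leaving $(\wh,\nabla\cdot\dbv)_K=(\wh,\dbv\cdot\bn_K)_{\partial K}$. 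Next, on every facet $F\subset\partial K$ we have $\wh|_F\in\Pp{p}{F}$, so the flux orthogonality of $\piHDG$, namely $((\widehat{\piQ\bv}-\bv)\cdot\bn_K,\muh)_F=0$ for $\muh\in\Pp{p}{F}$, yields $(\wh,\dbv\cdot\bn_K)_{\partial K}=(\wh,(\piQ\bv-\widehat{\piQ\bv})\cdot\bn_K)_{\partial K}$. Inserting the definition $\widehat{\piQ\bv}\cdot\bn_K=\piQ\bv\cdot\bn_K-\tau(\piS\psi-\piM\psi)$ turns the right-hand side into $(\tau\wh,\piS\psi-\piM\psi)_{\partial K}$. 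Finally, since $\tau\wh|_F\in\Pp{p}{F}$ on each facet, the defining property of $\piM$ in \eqref{EQN::FACET_PROJECTION} gives $(\tau\wh,\piM\psi-\psi)_F=0$, so $(\tau\wh,\piS\psi-\piM\psi)_{\partial K}=(\tau\wh,\piS\psi-\psi)_{\partial K}=(\tau\wh,\dpsi)_{\partial K}$. Summing over $K\in\Th$ and recalling the definitions of $b_h$ and $s_h$ gives $b_h(\wh,\dbv)=(\tau\wh,\dpsi)_{\partial\Th}=s_h(\wh,\dpsi)$.

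The only genuinely delicate point is the bookkeeping on the mesh skeleton. On an interior facet $\tau$ is double-valued, so the $\piM$-orthogonality must be invoked once for the trace of $\tau\wh$ from each side of $F$ (each such trace lies in $\Pp{p}{F}$, while $\piM\psi-\psi$ is single-valued there); on the Dirichlet facets one reads $\piM$ facet-wise, i.e.\ as the $L^2(F)$-projection onto $\Pp{p}{F}$, exactly as in the construction of $\piHDG$ in \cite{Cockburn_Gopalakrishnan_Sayas_2010}, so the same cancellation $(\tau\wh,\piM\psi-\psi)_F=0$ applies (and is automatic for the exact solution used later, since $\psi=0$ on $\partial\Omega$). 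One must also keep track of the sign/orientation convention for $\bn_K$ recorded in the footnote (here $\bvh\approx\nabla\psi$ rather than $-\nabla\psi$), which merely relocates signs and does not affect the structure of the argument.
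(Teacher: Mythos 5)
Your argument is correct. The paper itself dispatches this lemma in one line, by citing the weak commutativity property of the HDG projection (\cite[Prop.~2.1]{Cockburn_Gopalakrishnan_Sayas_2010}); what you have written is, in effect, a self-contained proof of that cited property, transplanted to the present sign convention in which $\bvh$ approximates $+\nabla\psi$. The steps are exactly the right ones: element-wise integration by parts, the volume orthogonality of $\piQ$ tested with $\nabla\wh\in\Pp{p-1}{K}^d$, the facet condition on $\widehat{\piQ \bv}\cdot\bn_K$ tested with $\wh|_F\in\Pp{p}{F}$ (which converts the boundary term into $(\tau\wh,\piS\psi-\piM\psi)_{\partial K}$), and the $L^2(F)$-orthogonality of $\piM$ applied to $\tau\wh$, followed by summation over $K$. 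Your handling of the two delicate points is also the intended one: $\piM$ must be read facet-wise, including on Dirichlet facets, as in \cite{Cockburn_Gopalakrishnan_Sayas_2010} (the statement \eqref{EQN::FACET_PROJECTION} is written only over interior facets, and on $\partial\Omega$ the cancellation is in any case harmless since $\psi=0$ there for the exact solution), and the double-valuedness of $\tau$ on interior facets only means the orthogonality is invoked once per element side. Compared with the paper's citation, your route is longer but buys transparency: it shows the identity \eqref{EQN::COMMUTATIVITY-IDENTITY} uses nothing beyond the defining relations \eqref{EQN::FACET_PROJECTION}--\eqref{EQN::HDG_PROJECTION}, the piecewise-constant nature of $\tau$, and the stated minimal regularity, and it verifies explicitly that the reversed sign convention for the vector variable does not perturb the identity.
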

\begin{proof}
This identity is an immediate consequence of the weak commutativity property in~\cite[Prop.~2.1]{Cockburn_Gopalakrishnan_Sayas_2010}.
\end{proof}
By the consistency of the proposed method and recalling the tilde~$(\sim)$ notation from~\eqref{EQN::TILDA-NOTATION}, the following error equations are verified:
\begin{subequations}
\begin{align*}
\bm_h(\ev, \brh) + b_h(\epsi, \brh) + e_h(\elambda, \brh)  & = 0 & & \forall \brh \in \Qp, \\
\nonumber
m_h((1 + 2k\alpha_h) \epsitt, \wh)  - c^2b_h(\wh, \tev) &\\+ c^2 s_h(\tepsi, \wh)
   + c^2 f_h(\telambda, \wh)  & = -  m_h( 2k({\dpt \psi}-\alphah) \pdt{2}\psi, \wh) & & \forall \wh \in \Sp, 
\\
- e_h(\muh, \ev) + f_h(\muh, \epsi) + g_h(\elambda, \muh) & = 0 & & \forall \muh \in \Mp.
\end{align*}
\end{subequations}

We are in a position to obtain~\emph{a priori} error bounds for the semidiscrete linearized formulation~\eqref{EQN::SEMI-DISCRETE-LINEARIZED} with respect to the continuous solution to the Westervelt equation in~\eqref{EQN::MODEL-PROBLEM-MIXED}.
\begin{theorem}[Error bounds for the semidiscrete linearized  
formulation]\label{THM:ERROR_BOUND_THEOREM}
Under the assumptions of Theorem~\ref{THM::ENERGY-STABILITY}, the following error bounds are satisfied:
\begin{subequations}
\begin{align}
\nonumber
\sup_{t \in (0, T)} & \Big(\frac{c^2}{2}\Norm{\ev}{L^2(\Omega)^d}^2 + \frac12 \Norm{\sqrt{1 + 2k\alphah}\dpt \epsi}{L^2(\Omega)}^2 \Big) \\
\nonumber 
\leq & \sup_{t \in (0, T)} \big(c^2\Norm{\dbv}{L^2(\Omega)^d}^2 + \Norm{\sqrt{1 + 2k \alphah}\dpt \dpsi}{L^2(\Omega)}^2 \big) + 2(1 - {\sigma_0})^{-1}\Big(\calE_h^{(0)}[\etapsih, \etavh, \etalambdah](0)\\
\label{EQN::ERROR_BOUND-LINEARIZED-1}
& + \frac{{T}}{2 \gamma_0(1 - {2}|k|\ulal)} \Norm{{\hat{\varphi}}}{L^2(0, T; L^2(\Omega))}^2 
+ \Big({\frac{\delta}{4}
+ \frac{c^2T}{2{\sigma_0}}} \Big) \Norm{\dpt \dbv}{L^2(0, T; L^2(\Omega)^d)}^2\Big), \\
\nonumber
\sup_{t \in (0, T)} & \Big(\frac{c^2}{2}\Norm{\dpt \ev}{L^2(\Omega)^d}^2 + \frac12 \Norm{\sqrt{1 + 2k\alphah}\dptt \epsi}{L^2(\Omega)}^2 \Big) \\
\nonumber
\leq & \sup_{t \in (0, T)} \Big(c^2\Norm{\dpt \dbv}{L^2(\Omega)^d}^2 + \Norm{\sqrt{1 + 2k \alphah}\dptt \dpsi}{L^2(\Omega)}^2\Big) + 2(1 - {\sigma_0})^{-1} \Big(\calE_h^{(1)}[\etapsih, \etavh, \etalambdah](0)\\
\label{EQN::ERROR_BOUND-LINEARIZED-2}
& + \frac{{T}}{2 \gamma_0(1 - 2|k|\ulal)} \Norm{{\dpt \hat\varphi}}{L^2(0, T; L^2(\Omega))}^2 
+ \left({\frac{\delta}{4} 
+ \frac{c^2 T}{2{\sigma_0}}} \right)\Norm{\dptt \dbv}{L^2(0, T; L^2(\Omega)^d)}^2\Big),
\end{align}
\end{subequations}
where~${\hat{\varphi}} \in 
H^1(0,T; \Sp)$ is given by
\begin{equation}
\label{EQN::DEF-VARPHI}
{\hat{\varphi} =  \Pi_0\left[(1 + 2k\alphah) {\pdt{2}\dpsi} + 2k(\dpt\psi-\alphah) \pdt{2}\psi\right],}
\end{equation}
with~$\Pi_0$ denoting the~$L^2(\Omega)$-orthogonal projection in~$\Sp$.
\end{theorem}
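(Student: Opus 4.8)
The plan is to reduce the error bounds for the semidiscrete linearized formulation to the energy stability estimates of Theorem~\ref{THM::ENERGY-STABILITY} by choosing the perturbation function~$\bupsilon$ and the forcing~$\varphi$ in that theorem appropriately, and then to apply the triangle inequality to recover the bounds stated in terms of the projection errors~$\dpsi$, $\dbv$. The starting point is the system of error equations displayed just before the theorem, written in terms of~$(\epsi, \ev, \elambda)$. First I would split each error into the projection part and the discrete part via the definitions in~\eqref{EQN::PROJECTION-ERRORS} and~\eqref{EQN::DISCRETE-ERRORS}, i.e.\ $\epsi = \dpsi - \etapsih$ (up to signs matching the excerpt's conventions), $\ev = \dbv - \etavh$, $\elambda = \dlambda - \etalambdah$, and substitute into the error equations to obtain a system for the discrete errors~$(\etapsih, \etavh, \etalambdah)$.

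The key observation is that this system for~$(\etapsih, \etavh, \etalambdah)$ has exactly the structure of the semidiscrete linearized HDG formulation~\eqref{EQN::SEMI-DISCRETE-LINEARIZED}, with a right-hand side generated by the projection errors. In the first equation, using the $L^2$-orthogonality of~$\piQ$ on~$\Pp{p-1}{K}^d$ and of~$\piM$ on~$(\partial\Th)^{\mathcal I}$, the terms~$\bm_h(\dbv, \brh)$ and~$e_h(\dlambda, \brh)$ do not vanish in general, so they produce the perturbation term~$(\bupsilon, \brh)_\Omega$ with~$\bupsilon$ essentially~$\dbv$ (this is the same mechanism as in~\cite[Lemma 3.1]{Cockburn_Quenneville-belair_2014}); I would track carefully that~$b_h(\dpsi,\brh) + e_h(\dlambda,\brh)$ collapses appropriately via the definition of~$\piHDG$. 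In the second equation, the commutativity identity~\eqref{EQN::COMMUTATIVITY-IDENTITY}, $b_h(\wh,\dbv) = s_h(\wh,\dpsi)$, is what makes the projection-error contributions from the~$b_h$ and~$s_h$ terms cancel against each other, leaving only the zeroth- and first-order-in-$\delta$ pieces and the mass-matrix term~$m_h((1+2k\alphah)\pdt{2}\dpsi,\wh)$; combined with the existing nonlinear consistency term~$-m_h(2k(\dpt\psi - \alphah)\pdt{2}\psi,\wh)$ on the right, this is exactly the forcing~$\varphi$, and projecting onto~$\Sp$ (which is legitimate since all other terms already lie in~$\Sp$) gives~$\hat\varphi$ as in~\eqref{EQN::DEF-VARPHI}. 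The third equation closes with no new contributions because~$\piM$ is defined through the same facet condition. Thus~$(\etapsih, \etavh, \etalambdah)$ solves~\eqref{EQN::SEMI-DISCRETE-LINEARIZED} with data~$(\varphi,\bupsilon) = (\hat\varphi, -\dbv)$ (sign per the excerpt's convention), so Assumption~\ref{Assumption_nondeg} is inherited and Theorem~\ref{THM::ENERGY-STABILITY} applies verbatim.

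Applying~\eqref{EQN::ENERGY-STABILITY-LINEARIZED-1} and~\eqref{EQN::ENERGY-STABILITY-LINEARIZED-2} to~$(\etapsih, \etavh, \etalambdah)$ bounds~$\sup_t\calE_h^{(0)}$ and~$\sup_t\calE_h^{(1)}$ of the discrete errors by the initial discrete energies, by~$\|\hat\varphi\|_{L^2(L^2)}$ (resp.\ its time derivative), and by~$\|\dpt\dbv\|_{L^2(L^2)}$ (resp.\ $\|\dptt\dbv\|$), since~$\dpt\bupsilon = -\dpt\dbv$ etc. Then I would use the triangle inequality $\|\sqrt{1+2k\alphah}\dpt\epsi\| \le \|\sqrt{1+2k\alphah}\dpt\dpsi\| + \|\sqrt{1+2k\alphah}\etapsiht\|$ and $\|\ev\| \le \|\dbv\| + \|\etavh\|$, together with the nondegeneracy bound~\eqref{EQN::NON-DEGENERACY-ALPHA}, to pass from control of~$\calE_h^{(0)}[\etapsih,\etavh,\etalambdah]$ to control of the quantity~$\tfrac{c^2}{2}\|\ev\|^2 + \tfrac12\|\sqrt{1+2k\alphah}\dpt\epsi\|^2$ on the left-hand side of~\eqref{EQN::ERROR_BOUND-LINEARIZED-1}; the factor~$2$ in front of~$(1-\sigma_0)^{-1}$ absorbs the cross terms from squaring the triangle inequality. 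The same argument with one extra time derivative gives~\eqref{EQN::ERROR_BOUND-LINEARIZED-2}. The main obstacle I anticipate is the bookkeeping in the second step: verifying precisely, with the nonstandard sign convention~$\bvh \approx \nabla\psi$ (flagged in the paper's footnote) and the tilde notation~\eqref{EQN::TILDA-NOTATION}, that the projection-error remainder in the momentum equation reduces exactly to~$\hat\varphi$ and that no uncontrolled~$\delta$-weighted term on~$\dpsi$ survives — this is where the commutativity identity and the specific single-facet choice~$\tau_K^\star = 0$ must be used carefully.
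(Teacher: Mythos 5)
Your proposal follows essentially the same route as the paper: split the errors into HDG-projection and discrete parts, observe that $(\etapsih,\etavh,\etalambdah)$ solves the linearized formulation~\eqref{EQN::SEMI-DISCRETE-LINEARIZED} with data~$\bupsilon = -\dbv$ and~$\varphi = \hat\varphi$ (reduced to~\eqref{EQN::DEF-VARPHI} via the facet projection property~\eqref{EQN::FACET_PROJECTION} and the commutativity identity~\eqref{EQN::COMMUTATIVITY-IDENTITY}), and then conclude with Theorem~\ref{THM::ENERGY-STABILITY} and the triangle inequality, exactly as in the paper's proof. The only slip is immaterial: $e_h(\dlambda,\brh)$ in fact vanishes, since~$\jumpN{\brh}$ lies in~$\Pp{p}{F}$ on each interior facet and~$\piM$ is the facet $L^2$-projection, so the perturbation in the first error equation is exactly~$-(\dbv,\brh)_{\Omega}$, as you end up asserting.
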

\begin{proof}
We only present the proof of the error bound in~\eqref{EQN::ERROR_BOUND-LINEARIZED-1}, as the proof of~\eqref{EQN::ERROR_BOUND-LINEARIZED-2} is similar.

We split the error functions in~\eqref{EQN::ERROR-FUNCTIONS} as
\begin{equation*}
\epsi = \etapsih - \dpsi, \qquad \ev = \etavh - \dbv, \qquad \elambda = \etalambdah - \dlambda.
\end{equation*}
The definition of the HDG projections in~\eqref{EQN::FACET_PROJECTION} and~\eqref{EQN::HDG_PROJECTION} implies that, for all~$t \in (0, T]$, the discrete error functions~$(\etapsih(\cdot, t), \etavh(\cdot, ), \etalambdah(\cdot, t)) \in \Sp \times \Qp \times \Mp$ solve a semidiscrete 
linearized problem as in~\eqref{EQN::SEMI-DISCRETE-LINEARIZED}. More precisely, they satisfy the following equations for all~$(\wh, \brh, \muh) \in \Sp \times \Qp \times \Mp$:
\begin{subequations}\label{EQN::ERROR-ETA-EQUATIONS}
\begin{align}
\label{EQN::ERROR-ETA-EQUATIONS-1}
    \bm_h(\etavh, \brh) + b_h(\etapsih, \brh) + e_h(\etalambdah, \brh) &= -
    (\dbv, \brh)_{\Omega} \\
\label{EQN::ERROR-ETA-EQUATIONS-2}
m_h((1 + 2k\alphah) \pdt{2}{\etapsih}, \wh)  - c^2b_h(\wh, \tetavh) + c^2 s_h(\tetapsih, \wh) + c^2 f_h(\tetalambdah, \wh) 
	 &= (\hat\varphi,\wh)_{\Omega},  \\
\label{EQN::ERROR-ETA-EQUATIONS-3}
- e_h(\muh, \etavh) + f_h(\muh, \etapsih) + g_h(\etalambdah, \muh) &= 0,
\end{align}
\end{subequations}
where~$\hat{\varphi} \in {H^1}
(0, T; \Sp)$ is a lifting function defined by the following projection:
\begin{equation*}
\begin{split}
(\hat{\varphi}, \wh)_{\Omega}  := &\ m_h((1 + 2k\alpha_h) {\pdt{2}\dpsi}, \wh) {+}  m_h( 2k({\dpt \psi}-\alphah) \pdt{2}\psi, \wh)  \\
& \qquad - c^2b_h(\wh, \tdbv) + c^2 s_h(\tdpsi, \wh) 
 + c^2 f_h(\tdlambda, \wh) \qquad \forall \wh \in \Sp.
\end{split}
\end{equation*}

From the definition of~$\piM$ in~\eqref{EQN::FACET_PROJECTION} and identity~\eqref{EQN::COMMUTATIVITY-IDENTITY}, we deduce that
\begin{equation*}
f_h(\tdlambda, \wh) = 0 \qquad \text{and} \qquad - b_h(\wh, \tdbv) + s_h(\tdpsi, \wh) = 0 \qquad \forall \wh \in \Sp,
\end{equation*}
which implies that~$\hat{\varphi}$ satisfies~\eqref{EQN::DEF-VARPHI}.

The desired bound is then obtained from the triangle inequality and the energy estimate~\eqref{EQN::ENERGY-STABILITY-LINEARIZED-1} in Theorem~\ref{THM::ENERGY-STABILITY}.
\end{proof}

\subsection{Choice of the discrete initial conditions}\label{SEC::INITIAL_DATA_CHOICE}
All the results presented so far are 
valid for any choice of the discrete initial conditions.
However, in order to show optimal 
convergence rates for the error in the low- and high-order energy norms,
we assume that~$\psi_0, \psi_1 \in H^2(\Omega) \cap H_0^1(\Omega)$ and choose the discrete initial conditions~$\psihi$ $(i = {0, 1})$ as the solution to the following discrete HDG elliptic problem: find~$(\psihi, \bvhi, \lambdahi) \in \Sp \times \Qp \times \Mp$ such that
\begin{subequations}
\label{EQN::DISCRETE-INITIAL-CONDITION}
\begin{align}
\bm_h(\bvhi, \brh) + b_h(\psihi, \brh) + e_h(\lambdahi, \brh) & = 0 & &  \forall \brh \in \Qp, \\
-b_h(\wh, \bvhi)  + s_h(\psihi, \wh) + f_h(\lambdahi, \wh) & = (-\Delta \psi_i, \wh)_{\Th} & & \forall \wh \in \Sp, \label{EQN::DISCRETE_INITIAL_DATA_2ND} \\
-e_h(\muh, \bvhi) + f_h(\muh, \psihi) + g_h(\lambdahi, \muh) & = 0 & & \forall \muh \in \Mp.
\end{align}
\end{subequations}
This choice of the discrete initial conditions can be interpreted as an HDG variant of the well-known Ritz projection, which was used in the numerical analysis for the strongly damped Westervelt equation in~\cite{Nikolic_Wohlmuth_2019}.

The variational problem~\eqref{EQN::DISCRETE-INITIAL-CONDITION} corresponds to 
the HDG discretization of a Poisson problem with homogeneous Dirichlet boundary 
conditions and a source term given by~$-\Delta \psi_i$.
Therefore, the existence and uniquess of a solution 
to~\eqref{EQN::DISCRETE-INITIAL-CONDITION} follows from 
\cite[Thm.~2.3]{Cockburn_Dong_Guzman_2008}.

In next lemma, we provide bounds for the terms containing the discrete errors~$(\etapsih, \etavh, \etalambdah)$ on the right-hand side of the \emph{a priori} bounds~\eqref{EQN::ERROR_BOUND-LINEARIZED-1} and~\eqref{EQN::ERROR_BOUND-LINEARIZED-2}.

\begin{lemma}[Estimates at~$t = 0$]\label{LEMMA::ERROR_INITIAL_DATA}
Assume that~$\psi_0, \psi_1 \in H^2(\Th) \cap H_0^1(\Omega)$, and the discrete initial conditions are chosen as in~\eqref{EQN::DISCRETE-INITIAL-CONDITION}. 
Then, the following bounds hold:
\begin{subequations}
\begin{align}
\label{EQN::INITIAL-LOW-ORDER-BOUND}
\calE_h^{(0)}[\etapsih, \etavh, \etalambdah](0) & \leq \frac{(1 + 2|k| 
\olal)}{2} \Norm{\piS \psi_1 - \psih^{(1)}}{L^2(\Omega)}^2
+ \frac{c^2}{2} \Norm{\dbv(\cdot, 0)}{L^2(\Omega)^d}^2, \\
\nonumber
\calE_h^{(1)}[\etapsih, \etavh, \etalambdah](0) & \leq \frac{c^2}{2} \Norm{\dpt 
\dbv(\cdot, 0)}{L^2(\Omega)^d}^2 + \frac{(1 + 2|k| \olal)^2}{(1 - 2|k| 
\ulal)}\Norm{\dptt\dpsi(\cdot, 0)}{L^2(\Omega)}^2 \\
\label{EQN::INITIAL-HIGH-ORDER-BOUND}
& \quad + \frac{4k^2}{1 - 2|k| \ulal}\Norm{(\pdt1 \psi - \alphah)(\cdot, 0) 
\dptt \psi(\cdot, 0)}{L^2(\Omega)}^2.
\end{align}
\end{subequations}
Moreover, if the domain~$\Omega$ is such that 
\begin{equation}
\label{EQN::REGULARITY-OMEGA}
\varphi \in H_0^1(\Omega), \ \  \Delta \varphi \in L^2(\Omega) \ \Longrightarrow \ \varphi \in H^2(\Omega),
\end{equation}
then, there exists a constant~$C_* > 0$ independent of~$h$ and~$\delta$ such that
\begin{equation}
\label{EQN::PSI1-ESTIMATE}
\Norm{\piS \psi_1 - \psih^{(1)}}{L^2(\Omega)} \leq C_* h \Norm{\dpt \dbv (\cdot, 0)}{L^2(\Omega)^d}.
\end{equation}
\end{lemma}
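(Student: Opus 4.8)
The strategy is to exploit the fact that the discrete initial conditions in~\eqref{EQN::DISCRETE-INITIAL-CONDITION} are precisely an HDG discretization of a Poisson problem, so that the discrete errors $(\etapsih, \etavh, \etalambdah)$ at~$t=0$ satisfy a homogeneous version of the error system~\eqref{EQN::ERROR-ETA-EQUATIONS} (with~$\hat\varphi = 0$ in~\eqref{EQN::ERROR-ETA-EQUATIONS-2} at~$t = 0$), because the HDG projection commutes with this elliptic problem. Consequently several of the energy terms defining~$\calE_h^{(0)}$ and~$\calE_h^{(1)}$ vanish or simplify. Concretely, I would first test~\eqref{EQN::ERROR-ETA-EQUATIONS} at~$t = 0$ with $(\wh, \brh, \muh) = (\etapsih, \etavh, \etalambdah)$ and add the three equations; the usual HDG algebraic cancellation (using $e_h$, $f_h$, $g_h$ and the definition of the numerical flux) leaves
\[
c^2\Big(\Norm{\etavh(\cdot,0)}{L^2(\Omega)^d}^2 + \Norm{\tau^{1/2}(\etalambdah - \etapsih)(\cdot,0)}{L^2((\partial\Th)^{\mathcal I})}^2 + \Norm{\tau^{1/2}\etapsih(\cdot,0)}{L^2((\partial\Th)^{\mathcal D})}^2\Big) = -c^2(\dbv(\cdot,0), \etavh(\cdot,0))_\Omega,
\]
and a Young's inequality then bounds the bracket by $\Norm{\dbv(\cdot,0)}{L^2(\Omega)^d}^2$. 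This yields~\eqref{EQN::INITIAL-LOW-ORDER-BOUND} once we observe that the remaining term~$\tfrac12\Norm{\sqrt{1+2k\alphah}\,\pdt1\etapsih(\cdot,0)}{L^2(\Omega)}^2$ is controlled via~\eqref{EQN::NON-DEGENERACY-ALPHA} by $\tfrac{1+2|k|\olal}{2}\Norm{\pdt1\etapsih(\cdot,0)}{L^2(\Omega)}^2$, and~$\pdt1\etapsih(\cdot,0) = \piS\psi_1 - \psih^{(1)}$ by the choice of initial data.

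For~\eqref{EQN::INITIAL-HIGH-ORDER-BOUND} the situation is slightly more delicate because~$\calE_h^{(1)}$ involves~$\pdt2\etapsih(\cdot,0)$ and~$\pdt1\etavh(\cdot,0)$, which are not directly prescribed. I would differentiate the error system~\eqref{EQN::ERROR-ETA-EQUATIONS} once in time, evaluate at~$t=0$, and again test with the time-differentiated discrete errors; the same algebraic cancellation handles the $\pdt1\etavh$ and jump terms, reducing them to~$\Norm{\pdt1\dbv(\cdot,0)}{L^2(\Omega)^d}^2$. For the term~$\tfrac12\Norm{\sqrt{1+2k\alphah}\,\pdt2\etapsih(\cdot,0)}{L^2(\Omega)}^2$, I would instead go back to~\eqref{EQN::ERROR-ETA-EQUATIONS-2} itself at~$t = 0$: since the flux/stabilization terms vanish there (the HDG projection is exact for the elliptic data problem), this equation reads $m_h((1+2k\alphah)\pdt2\etapsih, \wh) = (\hat\varphi(\cdot,0), \wh)_\Omega$, and recalling the form~\eqref{EQN::DEF-VARPHI} of~$\hat\varphi$ we can take~$\wh = \pdt2\etapsih(\cdot,0)$, use Cauchy--Schwarz together with the lower bound~$1 - 2|k|\ulal$ from~\eqref{EQN::NON-DEGENERACY-ALPHA}, and obtain
\[
\Norm{\pdt2\etapsih(\cdot,0)}{L^2(\Omega)} \le \frac{1}{1-2|k|\ulal}\Big((1+2|k|\olal)\Norm{\pdt2\dpsi(\cdot,0)}{L^2(\Omega)} + 2|k|\Norm{(\pdt1\psi-\alphah)(\cdot,0)\,\pdt2\psi(\cdot,0)}{L^2(\Omega)}\Big),
\]
and squaring gives the two remaining terms of~\eqref{EQN::INITIAL-HIGH-ORDER-BOUND} after a $(a+b)^2 \le 2a^2+2b^2$ split absorbed into the stated constants.

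Finally, for the superconvergence-type estimate~\eqref{EQN::PSI1-ESTIMATE}, the key identity is that~$\pdt1\etapsih(\cdot,0) = \piS\psi_1 - \psih^{(1)}$ is itself the error of an HDG approximation of a Poisson problem with data~$-\Delta\psi_1$ — obtained by differentiating the error system and noting that~$\pdt1$ of the elliptic projection equations~\eqref{EQN::DISCRETE-INITIAL-CONDITION} with~$i=0$ reproduces those with~$i=1$ up to the data term. Under the elliptic-regularity hypothesis~\eqref{EQN::REGULARITY-OMEGA}, I would invoke the standard HDG duality (Aubin--Nitsche) argument — precisely the one used to prove the $\mathcal{O}(h^{p+2})$ superconvergence in~\cite{Cockburn_Dong_Guzman_2008,Cockburn_Gopalakrishnan_Sayas_2010} — taking the dual problem with right-hand side~$\pdt1\etapsih(\cdot,0)$; its solution lies in~$H^2(\Omega)$ by~\eqref{EQN::REGULARITY-OMEGA}, and the resulting bound is $\Norm{\piS\psi_1 - \psih^{(1)}}{L^2(\Omega)} \lesssim h\,(\Norm{\pdt1\etavh(\cdot,0)}{L^2(\Omega)^d} + \dots)$, which combined with the low-order estimate applied to the differentiated system (controlling~$\Norm{\pdt1\etavh(\cdot,0)}{L^2(\Omega)^d}$ by~$\Norm{\pdt1\dbv(\cdot,0)}{L^2(\Omega)^d}$) yields~\eqref{EQN::PSI1-ESTIMATE}. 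The main obstacle I anticipate is the bookkeeping in this last duality argument: one must verify carefully that the single-facet stabilization~\eqref{EQN::SINGLE-FACET} does not spoil the $\mathcal{O}(h)$ gain and that all constants remain independent of~$\delta$ — which they do, since~$\delta$ enters neither the elliptic data problem~\eqref{EQN::DISCRETE-INITIAL-CONDITION} nor the projections.
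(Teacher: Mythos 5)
Your overall strategy is the same as the paper's: use the fact that the discrete initial data are HDG elliptic approximations so that the relevant elliptic combinations of the discrete errors vanish at~$t=0$, bound the term with~$\dptt\etapsih(\cdot,0)$ by testing~\eqref{EQN::ERROR-ETA-EQUATIONS-2} at~$t=0$ with~$\wh=\dptt\etapsih(\cdot,0)$ and using Cauchy--Schwarz plus the nondegeneracy~\eqref{EQN::NON-DEGENERACY-ALPHA}, and obtain~\eqref{EQN::PSI1-ESTIMATE} by the elliptic duality argument under~\eqref{EQN::REGULARITY-OMEGA} (the paper simply invokes~\cite[Thm.~4.1]{Cockburn_Gopalakrishnan_Sayas_2010}, and cites~\cite[Lemma~3.6]{Cockburn-ETAL-2018} for the low-order part).

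There is, however, one step that fails as written. It is not true that the error system is ``homogeneous with~$\hat{\varphi}=0$ at~$t=0$'' (your own high-order argument uses~$\hat{\varphi}(\cdot,0)\neq 0$, and it is precisely what generates the last two terms of~\eqref{EQN::INITIAL-HIGH-ORDER-BOUND}), and testing~\eqref{EQN::ERROR-ETA-EQUATIONS} with~$(\etapsih,\etavh,\etalambdah)$ does not produce your displayed identity: equation~\eqref{EQN::ERROR-ETA-EQUATIONS-2} tested with~$\wh=\etapsih$ leaves the inertia term~$m_h((1+2k\alphah)\dptt\etapsih,\etapsih)$, the~$\delta$-weighted tilde contributions, and~$(\hat{\varphi},\etapsih)_{\Omega}$, none of which cancel; moreover the~$b_h$ and~$e_h$ terms coming from~\eqref{EQN::ERROR-ETA-EQUATIONS-1} carry no factor~$c^2$ and so do not telescope against the~$c^2$-weighted terms of~\eqref{EQN::ERROR-ETA-EQUATIONS-2}. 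What the choice~\eqref{EQN::DISCRETE-INITIAL-CONDITION}, the definition of~$\piM$, and~\eqref{EQN::COMMUTATIVITY-IDENTITY} actually give at~$t=0$ is the pair of identities $b_h(\wh,\etavh)-s_h(\etapsih,\wh)-f_h(\etalambdah,\wh)=0$ and $b_h(\wh,\dpt\etavh)-s_h(\dpt\etapsih,\wh)-f_h(\dpt\etalambdah,\wh)=0$ for all~$\wh\in\Sp$; the low-order identity you want follows by combining~\eqref{EQN::ERROR-ETA-EQUATIONS-1} and~\eqref{EQN::ERROR-ETA-EQUATIONS-3} at~$t=0$ (tested with~$\etavh$ and~$\etalambdah$) with the first of these identities taken with~$\wh=\etapsih$ \emph{in place of} the wave-equation error equation --- this is exactly the argument of~\cite[Lemma~3.6]{Cockburn-ETAL-2018} that the paper cites, and the same substitution fixes your treatment of the~$c^2$-part of~$\calE_h^{(1)}[\etapsih,\etavh,\etalambdah](0)$. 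Once this is repaired, your high-order and duality arguments coincide with the paper's; only note that to recover the stated constants in~\eqref{EQN::INITIAL-HIGH-ORDER-BOUND} you should keep the weighted norm and bound $\tfrac12\Norm{\sqrt{1+2k\alphah}\,\dptt\etapsih}{L^2(\Omega)}^2$ directly by $\tfrac12\Norm{(1+2k\alphah)^{-1/2}\hat{\varphi}}{L^2(\Omega)}^2$, as the paper does: first bounding the unweighted~$\Norm{\dptt\etapsih}{L^2(\Omega)}$ and then squaring yields the larger constants~$(1+2|k|\olal)^3(1-2|k|\ulal)^{-2}$ and~$4k^2(1+2|k|\olal)(1-2|k|\ulal)^{-2}$, so the lemma as stated would not literally follow.
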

\begin{proof}
By using the nondegeneracy assumption in~\eqref{EQN::NON-DEGENERACY-ALPHA}, the low-order bound in~\eqref{EQN::INITIAL-LOW-ORDER-BOUND} can be proven as in~\cite[Lemma~3.6]{Cockburn-ETAL-2018} {for the linear wave equation}, {whereas estimate~\eqref{EQN::PSI1-ESTIMATE} follows from~\cite[Thm.~4.1]{Cockburn_Gopalakrishnan_Sayas_2010}}.
In contrast to~\cite[Lemma~3.6]{Cockburn-ETAL-2018}, due to the choice of~$\psih^{(1)}$ in~\eqref{EQN::DISCRETE-INITIAL-CONDITION}, the term~$\dpt \etapsih(\cdot, 0) = \piS \psi_1 - \psih^{(1)}$ does not vanish.

As for bound~\eqref{EQN::INITIAL-HIGH-ORDER-BOUND}, proceeding again as in~\cite[Lemma~3.6]{Cockburn-ETAL-2018}, we get
\begin{equation}
\label{EQN::AUX-HIGH-ORDER-INITIAL-BOUND}
\calE_h^{(1)}[\etapsih, \etavh, \etalambdah](0) \leq \frac12 \Norm{\sqrt{1 + 2k \alphah(\cdot, 0)} \dptt \etapsih (\cdot, 0)}{L^2(\Omega)}^2 + \frac{c^2}{2}\Norm{\dpt \dbv(\cdot, 0)}{L^2(\Omega)^d}.
\end{equation}

Hence, it only remains to bound the first term on the right-hand side of~\eqref{EQN::AUX-HIGH-ORDER-INITIAL-BOUND}. 
To do so, we choose~$\wh = \dptt \etapsih (\cdot, 0)$ in~\eqref{EQN::ERROR-ETA-EQUATIONS-2} for~$t = 0$  (the explicit evaluation at~$t = 0$ is omitted in the subsequent steps),
which leads to the following identity:
\begin{align*}
\Norm{\sqrt{1 + 2k \alphah} \dptt \etapsih}{L^2(\Omega)}^2 
= & \ c^2 \big(b_h(\dptt \etapsih, \etavh) 
- s_h(\etapsih, \dptt \etapsih) 
- f_h(\etalambdah, \dptt \etapsih) \big) \\
& + \delta \big(b_h(\dptt \etapsih, \dpt \etavh) 
- s_h(\dpt \etapsih, \dptt \etapsih) 
- f_h(\dpt \etalambdah, \dptt \etapsih) \big) \\
&  
+ (\hat\varphi, \dptt \etapsih)_{\Omega},
\end{align*}
where~$\hat\varphi \in \Sp(\Th)$ is defined in~\eqref{EQN::DEF-VARPHI}.

The choice of the discrete initial conditions~$\psihi$ ($i = {0, 1}$) in~\eqref{EQN::DISCRETE-INITIAL-CONDITION}, the definition of~$\piM$ in~\eqref{EQN::FACET_PROJECTION}, and identity~\eqref{EQN::COMMUTATIVITY-IDENTITY} imply that 
\begin{align*}
b_h(\dptt \etapsih, \etavh) 
& - s_h(\etapsih, \dptt \etapsih) 
- f_h(\etalambdah, \dptt \etapsih) 
= 0,\\
b_h(\dptt \etapsih, \dpt \etavh) 
& - s_h(\dpt \etapsih, \dptt \etapsih) 
- f_h(\dpt \etalambdah, \dptt \etapsih) 
= 0.
\end{align*}
Therefore, using the Cauchy--Schwarz inequality and the stability of the~$L^2(\Omega)$-orthogonal projection~$\Pi_0$, we get
\begin{equation*}
\begin{split}
& \frac12 \Norm{\sqrt{1 + 2k \alphah} \dptt \etapsih}{L^2(\Omega)}^2 \leq \frac12 \Norm{(1 + 2k \alphah)^{-\frac12}\hat\varphi}{L^2(\Omega)}^2 \\
& \qquad \leq (1 - 2|k| \ulal)^{-1} \big((1 + 2|k| 
\olal)^2\Norm{\dptt\dpsi(\cdot, 0)}{L^2(\Omega)}^2 + 4k^2 \Norm{(\dpt \psi - 
\alphah)(\cdot, 0) \dptt \psi(\cdot, 0)}{L^2(\Omega)}^2 \big),
\end{split}
\end{equation*}
which, together with {bound}~\eqref{EQN::AUX-HIGH-ORDER-INITIAL-BOUND}, completes the proof. 
\end{proof}

\subsection{\texorpdfstring{$h$}{h}-convergence\label{SECT::h-convergence}}
In order to obtain optimal $h$-convergence rates in Theorem~\ref{THM::ERROR-ESTIMATES} below for the error in the low- and high-order energy norms, we will assume that the nonlinear Westervelt equation in~\eqref{EQN::MODEL-PROBLEM-MIXED} has a regular enough solution. 
We refer the reader to~\cite{Kaltenbacher_Nikolic_2022,Kaltenbacher_Meliani_Nikolic_2023} for~$\delta$-uniform analyses of the Westervelt equation. 
Higher-order regularity of the exact solution follows from~\cite[Thm.~2.2]{Kawashima_1992} under stronger regularity and smallness assumptions on the initial conditions, and higher-order compatibility of the initial and boundary data.

Henceforth, we assume that~$h < 1$. We will also make the following assumption on how well the semidiscrete coefficient~$\alphah$ approximates~$\dpt \psi$. 
This assumption will later be verified by means of a fixed-point argument. 

\begin{assumption}\label{Assumption_approx} 
For given~$s_{\psi},s_{\bv}  \in [0, p]$, we assume that the {semidiscrete} coefficient~$\alphah$ and its time derivative~$\dpt \alphah$ approximate~$\dpt \psi$ and~$\dptt \psi$, respectively, up to the following accuracy:
\begin{equation*}
    \begin{split}
	\|{\dpt \psi} -\alphah\|_{L^\infty(0,t; L^2(\Omega))} & \leq C_* \big(h^{s_{\psi}+1} \|\psi\|_{H^{2}(0, {t}; H^{s_{\psi} + 1}(\Omega))} + h^{s_{\bv}+1} \Norm{\bv}{H^{{2}}(0, {t}; H^{s_{\bv} + 1}(\Omega)^d)}\big),\\
	\|{\dptt \psi }-\alphaht\|_{L^2(0,t; L^2(\Omega))} &  \leq C_*  \big(h^{s_{\psi}+1} \|\psi\|_{H^{3}(0, {t}; H^{s_{\psi} + 1}(\Omega))} + h^{s_{\bv}+1} \Norm{\bv}{H^{3}(0, {t}; H^{s_{\bv} + 1}(\Omega)^d)}\big),
    \end{split}
\end{equation*}
for all~$t\in [0,T]$, where the constant~$C_*>0$ does not depend on~$h$ or~$\delta$.

To establish the higher-order-in-time error estimate in~\eqref{EQN::ERROR_LINEAR_HIGH} below, we 
make a uniform boundedness assumption on the time derivative of the linear coefficient $\alphah$, namely, we require that
\begin{equation} 
\label{EQN::ERROR-HIGH-ASSUMPTION}\Norm{\alphaht}{L^2(0, T; L^{\infty}(\Omega))}\leq \check\alpha,
\end{equation}
for some positive constant~$\check\alpha$ 
independent of~$h$ and~$\delta$.

\end{assumption}
The smallness assumption in \eqref{EQN::ERROR-HIGH-ASSUMPTION} matches the one made in~\cite[Assumpt.~W1]{Meliani_Nikolic_2023} 
for the analysis of the mixed FEM approximation of the Westervelt equation.
\begin{theorem}[Error estimate for the semidiscrete linearized problem]\label{THM::ERROR-ESTIMATES}
Let~$h\in(0, \overline{h})$ and let the assumptions of Theorem~\ref{THM::ENERGY-STABILITY} and Assumption~\ref{Assumption_approx} hold. 
Let additionally~$\psi \in H^{3}(0, T; H_0^1(\Omega)\cap H^{s_{\psi} + 1}(\Omega))$ for some~$s_{\psi} \in [0, p]$ 
and~$\bv \in H^3(0, T; H^{s_{\bv} + 1}(\Omega)^d)$ for some~$s_{\bv} \in [0, p]$ be the solution to the IBVP for the Westervelt equation in~\eqref{EQN::MODEL-PROBLEM-MIXED}.
Let also~$\Omega$ be such that the regularity condition in~\eqref{EQN::REGULARITY-OMEGA} holds, and the discrete initial condition be chosen as in Section~\ref{SEC::INITIAL_DATA_CHOICE}. Then, 
\begin{subequations}
\begin{align}
\nonumber
   \sup_{t \in (0, T)}&  \big( \Norm{\ev}{L^2(\Omega)^d}^2 + \Norm{\dpt \epsi}{L^2(\Omega)}^2\big)  \\ &\lesssim \big(h^{2s_{\psi}+2} \|\psi\|^2_{H^{2}(0, {T}; H^{s_{\psi} + 1}(\Omega))} + h^{2s_{\bv}+2} \Norm{\bv}{H^{2}(0, {T}; H^{s_{\bv} + 1}(\Omega)^d)}^2\big)\big(1 + \Norm{\dptt \psi}{L^2(0, T; L^\infty(\Omega))}^2\big),
\end{align}
and
\begin{align}
   \nonumber
   \sup_{t \in (0, T)}&  \big( \Norm{\dpt\ev}{L^2(\Omega)^d}^2 + \Norm{\pdt2 \epsi}{L^2(\Omega)}^2\big)  \lesssim \big(h^{2s_{\psi}+2} \|\psi\|^2_{H^{3}(0, {T}; H^{s_{\psi} + 1}(\Omega))} \\ 
   \label{EQN::ERROR_LINEAR_HIGH}
   & + h^{2s_{\bv}+2} \Norm{\bv}{H^{3}(0, {T}; H^{s_{\bv} + 1}(\Omega)^d)}^2\big)\big(1 + \Norm{\dptt \psi}{L^\infty(0, T; L^\infty(\Omega))}^2+\Norm{\dpttt \psi}{L^2(0, T; L^\infty(\Omega))}^2\big),
   \end{align}
\end{subequations}
where the hidden constants are independent of~$h$ and~$\delta$.
\end{theorem}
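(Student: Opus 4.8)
The plan is to derive both estimates from the \emph{a priori} error bounds~\eqref{EQN::ERROR_BOUND-LINEARIZED-1} and~\eqref{EQN::ERROR_BOUND-LINEARIZED-2} of Theorem~\ref{THM:ERROR_BOUND_THEOREM} by controlling each term on their right-hand sides: the projection errors~$\dbv$,~$\dpsi$ together with their time derivatives, the initial discrete energies~$\calE_h^{(0)}[\etapsih,\etavh,\etalambdah](0)$ and~$\calE_h^{(1)}[\etapsih,\etavh,\etalambdah](0)$, and the consistency term~$\hat\varphi$ from~\eqref{EQN::DEF-VARPHI}. On the left-hand side, since~$\piS$,~$\piQ$, and~$\Pi_0$ all commute with~$\dpt$, the projection errors of~$\dpt^j\psi$ and~$\dpt^j\bv$ equal~$\dpt^j\dpsi$ and~$\dpt^j\dbv$; moreover, the nondegeneracy bound~$1+2k\alphah\geq 1-2|k|\ulal>0$ from~\eqref{EQN::NON-DEGENERACY-ALPHA} and~$c>0$ allow me to replace~$\frac{c^2}{2}\Norm{\ev}{L^2(\Omega)^d}^2+\frac12\Norm{\sqrt{1+2k\alphah}\dpt\epsi}{L^2(\Omega)}^2$ by a~$\delta$- and~$h$-independent multiple of~$\Norm{\ev}{L^2(\Omega)^d}^2+\Norm{\dpt\epsi}{L^2(\Omega)}^2$ (and likewise for the higher-order quantity~$\Norm{\dpt\ev}{L^2(\Omega)^d}^2+\Norm{\dptt\epsi}{L^2(\Omega)}^2$).

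First I would estimate the projection-error terms. By Lemma~\ref{LEMMA::Projections_Errors} with the single-facet choice~\eqref{EQN::SINGLE-FACET} (so that~$\tau_K^{\star}=0$ and~$\tau_K^{\max}=\bar\tau$), summing over~$K\in\Th$, and using~$\Norm{\nabla\cdot\dpt^j\bv}{H^{s_{\bv}}(K)}\leq\SemiNorm{\dpt^j\bv}{H^{s_{\bv}+1}(K)^d}$, one gets pointwise in time~$\Norm{\dpt^j\dbv}{L^2(\Omega)^d}\lesssim h^{s_{\bv}+1}\Norm{\dpt^j\bv}{H^{s_{\bv}+1}(\Omega)^d}$ and~$\Norm{\dpt^j\dpsi}{L^2(\Omega)}\lesssim h^{s_{\psi}+1}\Norm{\dpt^j\psi}{H^{s_{\psi}+1}(\Omega)}+h^{s_{\bv}+1}\Norm{\dpt^j\bv}{H^{s_{\bv}+1}(\Omega)^d}$ for~$j\in\{0,1,2\}$. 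Taking suprema or integrating in time, together with the embedding~$H^{m}(0,T;X)\hookrightarrow W^{m-1,\infty}(0,T;X)$, converts these contributions into terms of the form~$h^{2s_{\psi}+2}\Norm{\psi}{H^{m}(0,T;H^{s_{\psi}+1}(\Omega))}^2+h^{2s_{\bv}+2}\Norm{\bv}{H^{m}(0,T;H^{s_{\bv}+1}(\Omega)^d)}^2$, with~$m=2$ for the low-order estimate and~$m=3$ for the high-order one. For the initial energies I would use Lemma~\ref{LEMMA::ERROR_INITIAL_DATA}: bound~\eqref{EQN::INITIAL-LOW-ORDER-BOUND} reduces, via~\eqref{EQN::PSI1-ESTIMATE} and~$h<1$, to projection errors of~$\dbv(\cdot,0)$ and~$\dpt\dbv(\cdot,0)$ of the claimed order, while~\eqref{EQN::INITIAL-HIGH-ORDER-BOUND} additionally produces~$\Norm{(\dpt\psi-\alphah)(\cdot,0)\,\dptt\psi(\cdot,0)}{L^2(\Omega)}^2$, which I would dominate by~$\Norm{\dpt\psi-\alphah}{L^\infty(0,T;L^2(\Omega))}^2\,\Norm{\dptt\psi}{L^\infty(0,T;L^\infty(\Omega))}^2$ and then estimate the first factor through the first bound of Assumption~\ref{Assumption_approx}; this accounts for the factor~$\Norm{\dptt\psi}{L^\infty(0,T;L^\infty(\Omega))}^2$ in~\eqref{EQN::ERROR_LINEAR_HIGH}.

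It remains to estimate~$\hat\varphi$. By~$L^2(\Omega)$-stability of~$\Pi_0$ and the nondegeneracy bound, $\Norm{\hat\varphi}{L^2(0,T;L^2(\Omega))}\lesssim\Norm{\dptt\dpsi}{L^2(0,T;L^2(\Omega))}+\Norm{(\dpt\psi-\alphah)\dptt\psi}{L^2(0,T;L^2(\Omega))}$; the first term is a projection error of order~$h^{s_{\psi}+1}+h^{s_{\bv}+1}$ with~$H^2$-in-time norms, and for the second I would use~$\Norm{(\dpt\psi-\alphah)(t)\dptt\psi(t)}{L^2(\Omega)}\leq\Norm{(\dpt\psi-\alphah)(t)}{L^2(\Omega)}\Norm{\dptt\psi(t)}{L^\infty(\Omega)}$, pull out~$\Norm{\dpt\psi-\alphah}{L^\infty(0,T;L^2(\Omega))}$, integrate~$\Norm{\dptt\psi(t)}{L^\infty(\Omega)}^2$ in time, and invoke Assumption~\ref{Assumption_approx}, which yields the factor~$\Norm{\dptt\psi}{L^2(0,T;L^\infty(\Omega))}^2$ of the low-order estimate. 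For~$\Norm{\dpt\hat\varphi}{L^2(0,T;L^2(\Omega))}$ I would differentiate, obtaining~$\dpt\hat\varphi=\Pi_0\big[2k\alphaht\dptt\dpsi+(1+2k\alphah)\dpttt\dpsi+2k(\dptt\psi-\alphaht)\dptt\psi+2k(\dpt\psi-\alphah)\dpttt\psi\big]$, and bound the four summands separately: $(1+2k\alphah)\dpttt\dpsi$ is a projection error with~$H^3$-in-time norms; $\alphaht\dptt\dpsi$ is controlled by~$\Norm{\alphaht}{L^2(0,T;L^\infty(\Omega))}\Norm{\dptt\dpsi}{L^\infty(0,T;L^2(\Omega))}\leq\check\alpha\,\Norm{\dptt\dpsi}{L^\infty(0,T;L^2(\Omega))}$ via~\eqref{EQN::ERROR-HIGH-ASSUMPTION} and~$H^3(0,T;X)\hookrightarrow W^{2,\infty}(0,T;X)$; and the two defect products are handled as before, using both bounds of Assumption~\ref{Assumption_approx} and producing the factors~$\Norm{\dptt\psi}{L^\infty(0,T;L^\infty(\Omega))}^2$ and~$\Norm{\dpttt\psi}{L^2(0,T;L^\infty(\Omega))}^2$. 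Plugging everything into~\eqref{EQN::ERROR_BOUND-LINEARIZED-1}--\eqref{EQN::ERROR_BOUND-LINEARIZED-2}, using~$\delta<\overline{\delta}$ to bound the coefficients~$\frac{\delta}{4}+\frac{c^2T}{2\sigma_0}$ by a~$\delta$-independent constant, and using~$h<1$ to absorb leftover powers~$h^{2s+4}$ into~$h^{2s+2}$, completes the argument.

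The routine part is the bookkeeping; I expect the one genuinely delicate point to be the consistent treatment of the linearization-defect products~$(\dpt\psi-\alphah)\dptt\psi$, $(\dptt\psi-\alphaht)\dptt\psi$, and~$(\dpt\psi-\alphah)\dpttt\psi$, which enter~$\hat\varphi$, its time derivative, and the initial high-order energy: in each occurrence one must pair the~$L^2(\Omega)$-controlled defect (estimated through Assumption~\ref{Assumption_approx}) with an~$L^\infty(\Omega)$-bound on a time derivative of~$\psi$, which is exactly why the~$L^\infty$-in-space norms~$\Norm{\dptt\psi}{\cdot}$ and~$\Norm{\dpttt\psi}{\cdot}$ persist as multiplicative factors on the right-hand sides and cannot be folded into a clean~$\mathcal{O}(h^{2p+2})$ bound. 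A minor supporting point is that the HDG projections commute with~$\dpt$, so Lemma~\ref{LEMMA::Projections_Errors} applied pointwise in~$t$, together with the assumed Bochner regularity of~$(\psi,\bv)$, delivers the required orders for the projection errors of~$\dpt^j\psi$ and~$\dpt^j\bv$.
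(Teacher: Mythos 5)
Your proposal is correct and follows essentially the same route as the paper's proof: starting from the bounds of Theorem~\ref{THM:ERROR_BOUND_THEOREM}, invoking Lemma~\ref{LEMMA::ERROR_INITIAL_DATA} (with~\eqref{EQN::PSI1-ESTIMATE}) for the initial energies, Lemma~\ref{LEMMA::Projections_Errors} for the projection errors, Cauchy--Schwarz/H\"older estimates of~$\hat\varphi$ and~$\dpt\hat\varphi$ paired with Assumption~\ref{Assumption_approx} for the linearization defects, and concluding via the embeddings~$H^2(0,T)\hookrightarrow C^1([0,T])$, $H^3(0,T)\hookrightarrow C^2([0,T])$ and~$h<\overline h$, $\delta<\overline\delta$. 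Your explicit expansion of~$\dpt\hat\varphi$ and the pairing of each~$L^2(\Omega)$-controlled defect with an~$L^\infty(\Omega)$-bound on a time derivative of~$\psi$ is exactly how the stated multiplicative factors arise in the paper.
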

\begin{proof}
We start from the estimates in Theorem~\ref{THM:ERROR_BOUND_THEOREM}. We then combine them with Lemma~\ref{LEMMA::ERROR_INITIAL_DATA}, the H\"older inequality, and the approximation properties in Lemma~\ref{LEMMA::Projections_Errors} of the HDG projection.
Furthermore, the terms involving the forcing
function~$\hat{\varphi}$ in~\eqref{EQN::DEF-VARPHI} are estimated using the Cauchy--Schwarz and the H\"older inequalities as follows:
\begin{align*}
\Norm{\hat{\varphi}}{L^2(0, T; L^2(\Omega))} \leq & \ \Norm{1 + 2k \alphah}{L^{\infty}(0, T; L^{\infty}(\Omega))}\Norm{\dptt \dpsi}{L^2(0, T; L^2(\Omega))} \\
&+ 2|k|\Norm{\dpt \psi - \alphah}{L^{\infty}(0, T; L^2(\Omega))} \Norm{\dptt \psi}{L^2(0, T; L^{\infty}(\Omega))},\\
\Norm{\dpt \hat{\varphi}}{L^2(0, T; L^2(\Omega))} \leq & 
\ \Norm{\dpt \alphah}{L^2(0, T; L^{\infty}(\Omega))} \Norm{\dptt \dpsi}{L^{\infty}(0, T; L^2(\Omega))} \\
& + \Norm{1 + 2k \alphah}{L^{\infty}(0, T; L^{\infty}(\Omega))}\Norm{\dpttt \dpsi}{L^2(0, T; L^2(\Omega))},\\
& + \Norm{\dptt \psi - \dpt \psih}{L^2(0, T; L^2(\Omega))} \Norm{\dptt \psi}{L^{\infty}(0, T; L^2(\Omega))} \\
&  + \Norm{\dpt \psi - \alphah}{L^{\infty}(0, T; L^2(\Omega))} \Norm{\dpttt \psi}{L^2(0, T; L^{\infty}(\Omega))}.
\end{align*}
Finally, the terms involving the semidiscrete coefficient~$\alphah$ can be bounded using Assumption~\ref{Assumption_approx}.

The following estimates are then obtained:
\begin{subequations}
\begin{align*}
\nonumber
\sup_{t \in (0, T)} & \Big( \Norm{\ev}{L^2(\Omega)^d}^2 + \Norm{\dpt \epsi}{L^2(\Omega)}^2\Big) \\
\nonumber
\lesssim &
\ h^{2s_{\bv} + 2} \Big(
\SemiNorm{\bv(\cdot, 0)}{H^{s_{\bv} + 1}(\Omega)^d}^2 + h \SemiNorm{\dpt \bv(\cdot, 0)}{H^{s_{\bv} + 1}(\Omega)^d}^2 + \SemiNorm{\nabla \cdot (\dptt \bv)}{L^2(0, T; H^{s_{\bv}}(\Omega))}^2  \\
\nonumber
& \qquad \qquad + \sup_{t \in (0, T)} \SemiNorm{\bv}{H^{s_{\bv} + 1}(\Omega)^d}^2 + \sup_{t \in (0, T)} \SemiNorm{\nabla \cdot (\dpt\bv)}{H^{s_{\bv}}(\Omega)}^2 + \SemiNorm{\dpt \bv}{L^2(0, T; H^{s_{\bv} + 1}(\Omega)^d)}^2 \\
\nonumber
& \qquad \qquad
+ \Norm{\bv}{H^2(0, T; H^{s_{\bv} + 1}(\Omega)^d)}^2\big(1 + \Norm{\dptt \psi}{L^2(0, T; L^\infty(\Omega))}^2\big) \Big) \\
\nonumber
& + h^{2s_{\psi} + 2} \Big( \sup_{t \in (0, T)}\SemiNorm{\dpt \psi}{H^{s_{\psi} + 1}(\Omega)}^2+ \SemiNorm{\dptt \psi}{L^2(0, T; H^{s_\psi + 1}(\Omega))}^2 + \Norm{\dptt \psi}{L^2(0, T; H^{s_{\psi} + 1}(\Omega))}^2
\\
& \qquad \qquad \ + \Norm{\psi}{H^2(0, T; H^{s_{\psi} + 1}(\Omega))}^2 \Norm{\dptt \psi}{L^2(0, T; L^\infty(\Omega))}^2
\Big), 
\end{align*}
\vspace{-0.2in}
\begin{align*}
\nonumber
\sup_{t \in (0, T)} & \Big(\Norm{\dpt \ev}{L^2(\Omega)^d}^2 + \Norm{\dptt \epsi}{L^2(\Omega)}^2\Big) \\
\nonumber 
\lesssim & \ h^{2s_{\bv} + 2} \Big(
\SemiNorm{\nabla \cdot (\dptt \bv)(\cdot, 0)}{H^{s_{\bv}}(\Omega)}^2 + \SemiNorm{\dpt \bv(\cdot, 0)}{H^{s_{\bv} + 1}(\Omega)^d}^2 + \sup_{t \in (0, T)} \SemiNorm{\dpt \bv}{H^{s_{\bv} + 1}(\Omega)^d}^2 \\
\nonumber
& \qquad \quad + \sup_{t \in (0, T)} \SemiNorm{\nabla \cdot (\dptt \bv)}{H^{s_{\bv}}(\Omega)}^2 + \SemiNorm{\dptt \bv}{L^2(0, T; H^{s_{\bv} + 1}(\Omega)^d)}^2 + \Norm{\nabla \cdot (\dptt \bv)}{L^\infty(0, T; H^{s_{\bv}}(\Omega)^d)}^2\\
\nonumber 
& \qquad \quad  +  \Norm{\nabla \cdot (\dpttt \bv)}{L^2(0, T; H^{s_{\bv}}(\Omega)^d)}^2  + \Norm{\bv}{H^{3}(0, T; H^{s_{\bv} + 1}(\Omega)^d)}^2\Norm{\dpttt \psi }{L^2(0, T; L^\infty(\Omega))}^2
\Big) \\
\nonumber
& + h^{2s_{\psi} + 2} \Big(\SemiNorm{\dptt \psi(\cdot, 0)}{H^{s_\psi + 1}(\Omega)}^2 + \sup_{t \in (0, T)} \SemiNorm{\dptt \psi}{H^{s_{\psi} + 1}(\Omega)}^2 + \Norm{\dpttt \psi}{L^2(0, T; H^{s_\psi + 1}(\Omega))}^2 \\
\nonumber 
& \qquad \qquad + \Norm{\dptt \psi}{L^{\infty}(0, T; H^{s_{\psi} + 1}(\Omega))}^2 + \Norm{\dpttt \psi}{L^2(0, T; H^{s_{\psi} + 1}(\Omega))}^2 \\
\nonumber 
& \qquad \qquad + \Norm{\psi}{H^2(0, T; H^{s_{\psi} + 1}(\Omega))}^2 \Norm{\dptt\psi}{L^\infty(0, T; L^\infty(\Omega))}^2
\\
& \qquad \qquad + \|\psi\|_{H^{3}(0, T; H^{s_{\psi} + 1}(\Omega))}^2 \Norm{\dpttt \psi }{L^2(0, T; L^\infty(\Omega))}^2
\Big),
\end{align*}
\end{subequations}
where the hidden constants are independent of~$h$ and~$\delta$.
Using the Sobolev embeddings~$H^2(0,T) \hookrightarrow C^1([0,T])$ and $H^3(0,T) \hookrightarrow C^2([0,T])$, and the fact that $h\in(0,\overline{h})$, we get the desired result.
\end{proof}

\section{Analysis of the semidiscrete HDG formulation for the Westervelt equation}\label{SEC::NONLINEAR_WESTERVELT}
We are now in a position to analyze the 
nonlinear semidiscrete formulation~\eqref{EQN::SEMI-DISCRETE-OPERATORS}.
The main idea consists of employing a Banach fixed-point argument applied to the mapping 
\[\mathcal{F}: \BK \ni (\psih^*, \bvh^*) \mapsto (\psih, \bvh), \]
$(\psih, \bvh)$ being the two first components (i.e., we omit the~$\lambdah$ component, which is uniquely determined by~$(\psih, \bvh)$; see also Remark~\ref{remark:well_definedness_lambda} bellow) 
of the unique solution to linear problem~\eqref{EQN::SEMI-DISCRETE-LINEARIZED} with discrete initial conditions as in Section~\ref{SEC::INITIAL_DATA_CHOICE}, $\bupsilon = 0$, $\varphi = 0$, and
\begin{equation*}
\alpha_h=  \pdt{1}\psi_{h}^*
\end{equation*}
from
\begin{equation}
\label{DEF::ball_West}
\begin{split}
\BK & := \left\{\vphantom{\int_0^t}\right.(\psi_h^*, \bv^*_h) \in W^{2,\infty}(0,T; \Sp) \times W^{1,\infty}(0,T; \Qp ):  (\psi_h^*, \dpt \psi^*_{h})_{|_{t=0}} = (\psi_{h}^0, \psih^{1}), \\ &
\sup_{t \in (0, T)}  \Big( \Norm{\bv-\bvh^*}{L^2(\Omega)^d}^2 + \Norm{\dpt \psi -{\dpt\psi}^*_h}{L^2(\Omega)}^2\Big)  \\
& \qquad \qquad \qquad \qquad  \leq 
C_0 \Big(h^{2s_{\psi}+2}\|\psi\|^2_{H^{2}(0, T; H^{s_{\psi} + 1}(\Omega))} 
+ h^{2s_{\bv}+2}\Norm{\bv}{H^{2}(0, T; H^{s_{\bv} + 1}(\Omega)^d)}^2\Big),
\\
&
\sup_{t \in (0, T)}  \Big( \Norm{\dpt\bv-\dpt\bvh^*}{L^2(\Omega)^d}^2 + \Norm{\pdt2 \psi -{\pdt2\psi}^*_h}{L^2(\Omega)}^2\Big) \\
& \qquad \qquad  \qquad \qquad \leq C_1 \Big(h^{2s_{\psi}+2}\|\psi\|^2_{H^{3}(0, T; H^{s_{\psi} + 1}(\Omega))} + h^{2s_{\bv}+2}\Norm{\bv}{H^{3}(0, T; H^{s_{\bv} + 1}(\Omega)^d)}^2\Big)	\left.\vphantom{\int_0^t}\right\},
\end{split}
\end{equation}
which is a ball centered at the exact solution~$(\psi,\bv) \in H^{3}(0, T; H_0^1(\Omega)\cap H^{s_{\psi} + 1}(\Omega)) \times H^{3}(0, T; H^{s_{\bv} + 1}(\Omega)^d)$
for some~$s_{\psi}$, $s_{\bv} \in (\frac{d}2-1, p]$.  
In the definition of~$\BK$, $C_0$ and~$C_1$ are positive constants independent of~$h$ and~$\delta$ that will be fixed in the proof of Theorem~\ref{THM::FIXED-POINT}.

Next theorem concerns the existence and uniqueness of the solution to the semidiscrete formulation~\eqref{EQN::SEMI-DISCRETE-OPERATORS}. 
Moreover, it provides optimal \emph{a priori} error estimates due to the definition of the ball~$\BK$. We denote by~$I_h$ the Lagrange interpolation operator in~$\Sp$. 
In particular, we {will} use the approximation result in~\cite[{Thm.~4.4.20}]{Brenner_Scott_2008} and the inverse estimate in~\cite[Thm.~4.5.11]{Brenner_Scott_2008}.

\begin{theorem}\label{THM::FIXED-POINT}
Let~$\delta \in [0, \bar{\delta})$, $p > \frac{d}2-1$, and~$s_{\psi}$, $s_{\bv} \in (\frac{d}{2}-1, p]$. 
Assume {that}~$(\psi,\bv) \in H^{{3}}(0, T; H_0^1(\Omega)\cap H^{s_{\psi} + 1}(\Omega)) \times H^{3}(0, T; H^{s_{\bv} + 1}(\Omega)^d)$ is the solution to the Westervelt equation in~\eqref{EQN::MODEL-PROBLEM-MIXED} for suitable initial conditions~$(\psi,\psi_t)_{|_{t=0}} =(\psi_0,\psi_1)$. 
Furthermore, let the discrete initial conditions $(\psih,\dpt \psih)_{|_{t = 0}}$ 
be chosen as in Section~\ref{SEC::INITIAL_DATA_CHOICE}. 
Then, there exist $T>0$, 
\[\overline{h} = \overline{h}\big(\|\psi\|_{H^{3}(0, T; H^{s_{\psi} + 1}(\Omega))} , \Norm{\bv}{H^{{3}}(0, T; H^{s_{\bv} + 1}(\Omega)^d)}\big) < 1, \quad \text{and} \quad 0 < M = M({k}, T),\]   
such that, for~$0<h<\overline{h}$ and 
\begin{equation*}		\int_0^{T}\|\pdt3\psi(s)\|^2_{L^\infty(\Omega)} \ds + \sup_{t \in  (0,T)}\|\pdt2\psi(t)\|^2_{L^\infty(\Omega)} + \int_0^{T}\|\pdt2\psi(s)\|^2_{L^\infty(\Omega)} \ds + \sup_{t \in  (0,T)}\|\pdt1\psi(t)\|^2_{L^\infty(\Omega)}\leq M,
\end{equation*}
there is a unique {solution}~$(\psi_h, \bv_h,\lambdah)\in \BK \times 
W^{1,\infty}(0,T;\Mp)$ 
to the semidiscrete HDG formulation~\eqref{EQN::SEMI-DISCRETE-OPERATORS} for 
some constants~$C_0,\, C_1 > 0$ in the definition of~$\BK$ that are independent 
of~$h$ and~$\delta$.

\end{theorem}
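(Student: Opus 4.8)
The plan is to carry out the Banach fixed-point argument for the map~$\mathcal{F}$ introduced before the statement. Given~$(\psih^*,\bvh^*)\in\BK$, set~$\alphah=\dpt\psih^*$; the first task is to check that Assumptions~\ref{Assumption_nondeg} and~\ref{Assumption_approx} hold for this choice, so that the linear theory of Section~\ref{SECT::LINEARIZED-PROBLEM} applies. The reduction in~\eqref{EQN::LINEAR-RELATION-VARIABLES} together with classical ODE theory then gives a unique~$\psih\in W^{3,1}(0,T;\Sp)\hookrightarrow C^2([0,T];\Sp)$, a unique~$\bvh\in W^{3,1}(0,T;\Qp)$, and a uniquely determined~$\lambdah\in W^{3,1}(0,T;\Mp)$ (recovered from the saddle-point system in~\eqref{EQN::LINEAR-RELATION-VARIABLES}), with the prescribed discrete initial data. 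In particular~$\mathcal{F}(\psih^*,\bvh^*)=(\psih,\bvh)$ lands in the ambient Banach space~$W^{2,\infty}(0,T;\Sp)\times W^{1,\infty}(0,T;\Qp)$ of~$\BK$, and~$(\psih,\dpt\psih)_{|_{t=0}}=(\psih^0,\psih^1)$, so~$\mathcal{F}$ is well defined.

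The verification of the assumptions for~$\alphah=\dpt\psih^*$ is the technical heart, and the main tool is the inverse estimate of~\cite[Thm.~4.5.11]{Brenner_Scott_2008}, which converts the~$L^2(\Omega)$-smallness encoded in~$\BK$ into~$L^\infty(\Omega)$-smallness at the cost of a factor~$h^{-d/2}$. Writing~$\dpt\psih^*-I_h\dpt\psi=(\dpt\psih^*-\dpt\psi)+(\dpt\psi-I_h\dpt\psi)$ and using~$(\psih^*,\bvh^*)\in\BK$ together with the interpolation bound of~\cite[Thm.~4.4.20]{Brenner_Scott_2008}, one gets
\[
\sup_{t\in(0,T)}\Norm{\dpt\psih^*-I_h\dpt\psi}{L^\infty(\Omega)}\lesssim\big(h^{s_\psi+1-d/2}+h^{s_\bv+1-d/2}\big)\big(\Norm{\psi}{H^{2}(0,T;H^{s_\psi+1}(\Omega))}+\Norm{\bv}{H^{2}(0,T;H^{s_\bv+1}(\Omega)^d)}\big),
\]
which tends to~$0$ as~$h\to0^+$ precisely because~$s_\psi,s_\bv>\tfrac{d}{2}-1$. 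Combined with the~$L^\infty$-stability of~$I_h$ and the bound~$\sup_t\Norm{\dpt\psi(t)}{L^\infty(\Omega)}^2\le M$, this controls~$\Norm{\alphah}{L^\infty(\Omega\times(0,T))}$ by~$C\sqrt{M}+o(1)$, so that, choosing~$M$ small and~$\ulal,\olal$ comparable to~$\sqrt{M}$, the nondegeneracy~\eqref{EQN::NON-DEGENERACY-ALPHA} holds with constants independent of~$h$ and~$\delta$. The same mechanism applied to~$\dpt\alphah=\dptt\psih^*$ and the~$M$-bound on~$\Norm{\dptt\psi}{L^2(0,T;L^\infty(\Omega))}$ gives the smallness~\eqref{EQN::ENERGY-ASSUMPTION} (after also fixing~$\gamma_0$ small and some~$\sigma_0\in(\gamma_0,1)$) and the boundedness~\eqref{EQN::ERROR-HIGH-ASSUMPTION}; finally, the two ball inequalities in~\eqref{DEF::ball_West} are, by definition, exactly approximation bounds of the form required in Assumption~\ref{Assumption_approx} with~$C_*\sim\sqrt{C_0}+\sqrt{C_1}$.

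With the assumptions in force, the self-mapping property follows from Theorem~\ref{THM::ERROR-ESTIMATES}: its two estimates, combined with Lemma~\ref{LEMMA::ERROR_INITIAL_DATA} (which bounds the initial discrete energies~$\calE_h^{(j)}[\etapsih,\etavh,\etalambdah](0)$ via the Ritz-type choice~\eqref{EQN::DISCRETE-INITIAL-CONDITION} and~\eqref{EQN::PSI1-ESTIMATE}) and the~$M$-bound used to absorb the factors~$1+\Norm{\dptt\psi}{\cdot}^2$, show that~$(\psih,\bvh)$ satisfies both defining inequalities of~$\BK$ with~$C_0,C_1$ taken equal to the hidden constants of Theorem~\ref{THM::ERROR-ESTIMATES} times~$(1+M)$. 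One must here close a fixed-point inequality in~$C_0,C_1$ themselves, since those hidden constants depend (at worst quadratically) on the~$C_*\sim\sqrt{C_0}+\sqrt{C_1}$ of the previous step; shrinking~$M$ makes the relevant coefficient strictly less than~$1$, so admissible~$C_0,C_1$ exist, independent of~$h$ and~$\delta$, and for~$0<h<\overline{h}$ with~$\overline{h}$ small enough (and~$<1$) we obtain~$\mathcal{F}(\BK)\subseteq\BK$.

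For the contraction, take~$(\psih^{*,i},\bvh^{*,i})\in\BK$ with images~$(\psih^i,\bvh^i)$ and~$\alphah^i=\dpt\psih^{*,i}$, $i=1,2$. Subtracting the two systems~\eqref{EQN::SEMI-DISCRETE-LINEARIZED}, the difference solves a linearized problem with coefficient~$\alphah^1$, with~$\bupsilon=0$, with vanishing initial discrete energies (both images share the discrete initial conditions of Section~\ref{SEC::INITIAL_DATA_CHOICE}, and evaluating the equations at~$t=0$ shows that their first and second time derivatives also coincide there), and with forcing~$\varphi=-2k(\alphah^1-\alphah^2)\dptt\psih^2$. Applying the low- and high-order estimates of Theorem~\ref{THM::ENERGY-STABILITY} to the difference, and bounding~$\Norm{\dptt\psih^2}{L^\infty(0,T;L^\infty(\Omega))}\lesssim\sqrt{M}$ by an inverse estimate (using~$(\psih^2,\bvh^2)\in\BK$ from the self-mapping step), one arrives at
\[
\sup_{t\in(0,T)}\Big(\Norm{\bvh^1-\bvh^2}{L^2(\Omega)^d}^2+\Norm{\dpt(\psih^1-\psih^2)}{L^2(\Omega)}^2+\Norm{\dpt(\bvh^1-\bvh^2)}{L^2(\Omega)^d}^2+\Norm{\dptt(\psih^1-\psih^2)}{L^2(\Omega)}^2\Big)\lesssim (T^2+T)\,k^2M\,d_*^2,
\]
where~$d_*$ is the distance between the two arguments in the metric of~$W^{2,\infty}(0,T;\Sp)\times W^{1,\infty}(0,T;\Qp)$ (a complete metric on~$\BK$, which is closed in this norm). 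Choosing~$T$ and/or~$M$ small makes the prefactor less than~$1$, so~$\mathcal{F}$ is a contraction; the Banach fixed-point theorem then yields the unique fixed point~$(\psih,\bvh)\in\BK$, which is the unique solution of~\eqref{EQN::SEMI-DISCRETE-OPERATORS} in~$\BK$, with~$\lambdah\in W^{1,\infty}(0,T;\Mp)$ recovered as above. The main obstacle throughout is the constant bookkeeping of the middle two steps — closing the~$M$-dependent fixed-point inequalities for~$C_0,C_1$, and consistently controlling~$L^\infty(\Omega)$ norms of discrete time derivatives (including the~$\dpttt\psih^2$ term arising from~$\dpt\varphi$ in the high-order estimate, where the~$H^3$-in-time regularity of~$(\psi,\bv)$ is consumed via a stability bound for the time-differentiated scheme) through inverse estimates under the constraint~$s_\psi,s_\bv>\tfrac{d}{2}-1$.
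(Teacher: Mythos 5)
Your verification of the assumptions and the self-mapping step are essentially the paper's own argument: the same inverse-estimate mechanism (as in~\eqref{EQN::FIXED_POINT_NONDEG}) to pass from the $L^2$-closeness encoded in~$\BK$ to the $L^\infty$ nondegeneracy and smallness conditions \eqref{EQN::NON-DEGENERACY-ALPHA}, \eqref{EQN::ENERGY-ASSUMPTION}, \eqref{EQN::ERROR-HIGH-ASSUMPTION}, followed by Theorem~\ref{THM::ERROR-ESTIMATES} and Lemma~\ref{LEMMA::ERROR_INITIAL_DATA} to close the constants~$C_0$, $C_1$ for small~$M$, $\overline h$, $T$. The reconstruction of~$\lambdah$ from~$(\psih,\bvh)$ also matches the paper.

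The genuine gap is in your contraction step. You claim that~$\mathcal{F}$ contracts with respect to the full metric of~$W^{2,\infty}(0,T;\Sp)\times W^{1,\infty}(0,T;\Qp)$, i.e.\ including the high-order difference quantities~$\dpt(\bvh^1-\bvh^2)$ and~$\dptt(\psih^1-\psih^2)$. To bound these you must apply the high-order estimate~\eqref{EQN::ENERGY-STABILITY-LINEARIZED-2} to the difference system, whose forcing is~$\varphi=-2k(\alphah^1-\alphah^2)\dptt\psih^2$; that estimate requires~$\Norm{\dpt\varphi}{L^2(0,T;L^2(\Omega))}$, and~$\dpt\varphi$ contains the term~$(\alphah^1-\alphah^2)\,\dpttt\psih^2$. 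A uniform-in-$h$-and-$\delta$ bound on~$\dpttt\psih^2$ (say in~$L^2(0,T;L^\infty(\Omega))$) is available neither from the ball~$\BK$, which only controls up to~$\dptt$ of the scalar component and~$\dpt$ of the vector component, nor from the energy functionals~$\calE_h^{(0)}$, $\calE_h^{(1)}$ established in Theorem~\ref{THM::ENERGY-STABILITY}. Your parenthetical appeal to ``a stability bound for the time-differentiated scheme'' would amount to a second time-differentiated energy estimate, which in turn needs control of~$\dptt\alphah=\dpttt\psih^{*}$ of the \emph{argument} — not contained in~$\BK$ — or, if extracted directly from the ODE system, brings in negative powers of~$h$ through the discrete bilinear forms, destroying the $h$-uniformity needed to make the contraction factor small by shrinking~$T$ and~$M$ alone. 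This is precisely why the paper does \emph{not} contract in the strong metric: it proves contractivity only with respect to the lower topology~$\sup_{t\in(0,T)}\calE_h^{(0)}[\cdot,\cdot,\cdot](t)$ (following \cite[Thm.~5.1]{Meliani_Nikolic_2023}), where only~$\varphi$ itself, not~$\dpt\varphi$, must be estimated, and then compensates by showing that~$\BK$ is closed with respect to this weaker topology (as in \cite[Thm.~1.4]{Kaltenbacher_Nikolic_2022}); the self-mapping property supplies the high-order bounds on the fixed point. To repair your argument, drop the high-order terms from the contraction inequality and add the weak-closedness step — as written, the inequality you display is not justified.
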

\begin{proof}
We proceed by using a Banach fixed-point argument.
The ball~$\BK$ is nonempty as it contains the HDG projection of the exact solution thanks to the estimates given in Lemma~\ref{LEMMA::Projections_Errors}. 

We split the proof into three parts. The first two are intended to prove the 
existence and uniqueness of a fixed point. The third part discusses the 
reconstruction of $\lambdah$.

\paragraph{Part I: Self-mapping.}
Let $(\psi_h^*, \bv^*_h) \in \BK$ and set $$(\psi_h, \bv_h) = \mathcal{F} (\psi_h^*, \bv^*_h).$$
To show the self-mapping property, we 
use the error estimates in Theorem~\ref{THM::ERROR-ESTIMATES}. 
We first verify that its assumptions hold.
We start by considering the nondegeneracy assumption in~\eqref{EQN::NON-DEGENERACY-ALPHA}. Using the triangle inequality, the quasi-uniformity of the mesh, and the stability and inverse estimates in~\cite[Thm.~4.4.20, Thm.~4.5.11]{Brenner_Scott_2008} for the Lagrange interpolation operator, we obtain
\begin{equation}\label{EQN::FIXED_POINT_NONDEG}
	\begin{split}
	\Norm{\alpha_h}{L^\infty(0,T;L^\infty(\Omega))} \leq &\, \|\pdt1\psi_{h}^*-I_h \pdt1\psi\|_{L^\infty(0,T;L^\infty(\Omega))}+ \|I_h \pdt1\psi\|_{L^\infty(0,T;L^\infty(\Omega))}
	\\
	\lesssim &\, h^{-d/2} \|\pdt1\psi_{h}^*-I_h \pdt1\psi\|_{L^\infty(0,T;L^2(\Omega))}+\|I_h \pdt1\psi\|_{L^\infty(0,T;L^\infty(\Omega))}\\
	\lesssim &\, h^{-d/2} \|\pdt1\psih^*-\pdt1\psi\|_{L^\infty(0,T;L^2(\Omega))}+ h^{-d/2} \|\pdt1\psi-I_h \pdt1\psi\|_{L^\infty(0,T;L^2(\Omega))} \\&+\|I_h \pdt1\psi\|_{L^\infty(0,T;L^\infty(\Omega))}.
	\end{split}
\end{equation}
Thus, we can guarantee that {the nondegeneracy condition in~\eqref{EQN::NON-DEGENERACY-ALPHA} holds with}
 \begin{equation} \label{EQN::Critical_case_p}
 \ulal = \olal = \overline{C} \left(\overline{h}^{s_{\psi}+1 - d/2}\|\psi\|_{H^{{3}}(0, T; H^{s_{\psi} + 1}(\Omega))} + \overline{h}^{s_{\bv}+1-d/2}\Norm{\bv}{H^{{2}}(0, T; H^{s_{\bv} + 1}(\Omega)^d)} + M^{1/2} \right) \in \Big(0, \frac{1}{2|k|}\Big),
 \end{equation}
for sufficiently small~$M$ and~$\overline h$, 
and some positive constant~$\overline{C}$ depending on~$C_0$ and~$C_1$, but not on~$h$ or~$\delta$.

 Similarly, {the smallness} assumptions {in}~\eqref{EQN::ENERGY-ASSUMPTION} and~\eqref{EQN::ERROR-HIGH-ASSUMPTION} can be shown to hold provided~$M$, $\overline{h}$, and {the} final time~$T$ are sufficiently small. 
 Assumption~\ref{Assumption_approx} is naturally verified since~$(\psi_h^*, \bv^*_h) \in \BK$.
 Therefore, Theorem~\ref{THM::ERROR-ESTIMATES} ensures the self-mapping property of~$\mathcal{F}$ (i.e., $\mathcal{F}(\BK) \subseteq \BK$) provided that~$C_0$ and~$C_1$ are large enough, and~$M$ is sufficiently small.

\paragraph{Part II: Strict contractivity.}
 Contractivity of the mapping~$\mathcal{F}$ follows similarly as in~\cite[Thm.~5.1]{Meliani_Nikolic_2023}, where the~$\delta$-robustness of the mixed FEM 
 for the Westervelt equation was proven. 
Indeed, one can obtain the contractivity of~$\mathcal{F}$ with respect to the lower topology $\sup_{t \in (0,T)}\calE_h^{(0)}[\cdot, \cdot, \cdot](t)$ by reducing~$M$ and~$\overline{h}$. 
The arguments showing the closedness of~$\BK$ with respect to the lower topology are analogous to \cite[Thm.~1.4]{Kaltenbacher_Nikolic_2022}. This shows that the fixed-point problem has a unique solution in~$\BK$, which solves the nonlinear problem~\eqref{EQN::SEMI-DISCRETE-OPERATORS}

\paragraph{Part III: Reconstructing $\lambdah$.}
Parts I and II ensure the existence of a unique fixed point $(\psih,\bvh) \in 
\BK$ to the mapping $\mathcal{F}$. To finish constructing the solution to the 
semidiscrete HDG formulation~\eqref{EQN::SEMI-DISCRETE-OPERATORS} we reconstruct 
$\lambdah$ as a function of $(\psih,\bvh)$ uniquely through 
\eqref{EQN::SEMI-DISCRETE-OPERATORS-3}. The triplet $(\psih, 
\bvh,\lambdah)\in\BK \times W^{1,\infty}(0,T;\Mp)$ thus constructed is the 
unique solution to \eqref{EQN::SEMI-DISCRETE-OPERATORS}.
\end{proof}

Along the lines of the analysis performed in~\cite[{\S4}]{Nikolic_2023} for the conforming FEM, we state here a corollary of the previous existence and uniqueness 
theorem, which will be useful in Section~\ref{SEC::ASYMPTOTIC_LIMITS} below for establishing the rate of convergence as~$\delta \to 0^+$.

\begin{corollary}\label{CORR:BOUNDS_NONLINEAR_SOLUTION}
    Under the assumptions of Theorem~\ref{THM::FIXED-POINT}, the solution~$(\psih,\bvh,\lambdah)$ to~\eqref{EQN::SEMI-DISCRETE-OPERATORS} satisfies
    \begin{equation}
    \label{EQN::BOUND-DPTT-PSIH}
\|\psihtt\|_{L^\infty(0,T;L^\infty(\Omega))} \leq C (\|\psi\|_{H^{3}(0, T; H^{s_{\psi} + 1}(\Omega))} + \Norm{\bv}{H^{3}(0, T; H^{s_{\bv} + 1}(\Omega)^d)}),
\end{equation}
    where $C>0$ does not depend on $h$ or $\delta$. Furthermore, the following bound holds:
    \begin{equation}
    \label{EQN::BOUND-DPT-PSIH}
    \|\psiht\|_{L^\infty(0,T; L^\infty(\Omega))} \leq \olal.
    \end{equation}
\end{corollary}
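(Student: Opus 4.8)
The plan is to extract both bounds from the membership of the fixed-point solution in the ball~$\BK$ together with the regularity of the exact solution~$\psi$. For the first bound~\eqref{EQN::BOUND-DPTT-PSIH}, I would write $\psihtt = (\psihtt - I_h \pdt{2}\psi) + I_h \pdt{2}\psi$ and estimate in $L^\infty(0,T;L^\infty(\Omega))$. The second term is controlled by the stability of the Lagrange interpolation operator in $L^\infty$ and the Sobolev embedding $H^{s_\psi+1}(\Omega)\hookrightarrow L^\infty(\Omega)$ (valid since $s_\psi > \tfrac d2 - 1$), combined with $H^3(0,T)\hookrightarrow C^2([0,T])$, giving a bound by $\|\psi\|_{H^3(0,T;H^{s_\psi+1}(\Omega))}$. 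For the first term, I would first pass to $L^2(\Omega)$ using the inverse estimate $\|\cdot\|_{L^\infty(\Omega)} \lesssim h^{-d/2}\|\cdot\|_{L^2(\Omega)}$ (as in~\eqref{EQN::FIXED_POINT_NONDEG}), then split further as $\psihtt - \pdt{2}\psi$ plus $\pdt{2}\psi - I_h\pdt{2}\psi$; the second piece contributes $h^{-d/2}\cdot h^{s_\psi+1}|\pdt2\psi|_{H^{s_\psi+1}} = h^{s_\psi+1-d/2}(\cdots)$, which is bounded since $s_\psi+1-d/2>0$ and $h<\overline h<1$, while the first piece is exactly $\|\pdt2\psi - {\pdt2\psi}^*_h\|_{L^\infty(0,T;L^2(\Omega))}$ for the fixed point, controlled by the second defining inequality of~$\BK$ — which after multiplication by $h^{-d/2}$ again yields $h^{s_\psi+1-d/2}$- and $h^{s_\bv+1-d/2}$-type terms, hence bounded uniformly in $h$ and $\delta$. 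Collecting the pieces gives~\eqref{EQN::BOUND-DPTT-PSIH}.

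For the second bound~\eqref{EQN::BOUND-DPT-PSIH}, I would recall that in the proof of Theorem~\ref{THM::FIXED-POINT} the nondegeneracy constant was taken as $\ulal=\olal$ equal to the right-hand side of~\eqref{EQN::Critical_case_p}, and that this was precisely designed to dominate $\|\alpha_h\|_{L^\infty(0,T;L^\infty(\Omega))}$ where $\alpha_h = \pdt1\psi_h^*$ for the argument $(\psi_h^*,\bv_h^*)\in\BK$. Since the fixed point satisfies $\psih = \psi_h^* $, its time derivative $\psiht$ plays the role of $\alpha_h$, and the chain of estimates~\eqref{EQN::FIXED_POINT_NONDEG} applied to $\psiht$ (using the ball constraints on $\|\pdt1\psi - {\pdt1\psi}^*_h\|_{L^\infty(0,T;L^2(\Omega))}$, the interpolation error, and the inverse estimate) gives exactly $\|\psiht\|_{L^\infty(0,T;L^\infty(\Omega))}\le \olal$, after possibly absorbing the smallness condition on $M$ and $\overline h$ already imposed in Theorem~\ref{THM::FIXED-POINT}.

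I expect the main obstacle to be bookkeeping rather than anything conceptual: one must make sure the negative power $h^{-d/2}$ from the inverse estimate is always compensated by a strictly larger positive power coming from the approximation/ball estimates, which holds precisely because $s_\psi, s_\bv \in (\tfrac d2-1, p]$ so that $s_\psi + 1 - \tfrac d2 > 0$ and $s_\bv + 1 - \tfrac d2 > 0$; this is the same critical-exponent balance that appears in~\eqref{EQN::Critical_case_p} and is what forces the restriction $p > \tfrac d2 - 1$. A minor additional point is to track the time regularity carefully, invoking $H^3(0,T)\hookrightarrow C^2([0,T])$ to pass from the Bochner norms in the hypotheses to the $L^\infty$-in-time norms on the left-hand sides. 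Apart from that, both bounds are direct consequences of the definition of $\BK$ and the estimates already assembled in the proof of Theorem~\ref{THM::FIXED-POINT}.
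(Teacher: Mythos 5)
Your proposal is correct and follows essentially the same route as the paper, whose proof simply invokes the inverse-estimate argument of~\eqref{EQN::FIXED_POINT_NONDEG}: you spell out exactly that chain (triangle inequality with the Lagrange interpolant, inverse estimate~$h^{-d/2}$, the ball constraints in~$\BK$, and the exponent balance~$s_{\psi}+1-\tfrac d2>0$, $s_{\bv}+1-\tfrac d2>0$), together with the observation that~\eqref{EQN::Critical_case_p} was designed precisely so that~$\olal$ dominates~$\|\psiht\|_{L^\infty(0,T;L^\infty(\Omega))}$ for the fixed point. No gaps; your write-up is just a more detailed version of the paper's one-line proof.
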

\begin{proof}
    The uniform-in-$h$-and-$\delta$ bound{s} follow from the use of inverse estimates as in~\eqref{EQN::FIXED_POINT_NONDEG}.
\end{proof}

We end this section showing that the solution to the semidiscrete 
formulation~\eqref{EQN::SEMI-DISCRETE-OPERATORS} from 
Theorem~\ref{THM::FIXED-POINT} is energy stable.
In the proof of the next result, we use the embedding~$H^3(0, T; H_0^1(\Omega) 
\cap H^2(\Omega)) \hookrightarrow C^2([0, T]; H_0^1(\Omega) \cap H^2(\Omega))$.

\begin{lemma}[Energy stability]\label{LEMMA::ENERGY-STABILITY}
Let the assumptions of Theorem~\ref{THM::FIXED-POINT} hold. Moreover, assume 
that the solution~$(\psi, \bv)$ to the Westervelt equation 
in~\eqref{EQN::MODEL-PROBLEM-MIXED} belongs to~$H^3(0, T; H_0^1(\Omega) \cap 
H^2(\Omega)) \times H^3(0, T; H^2(\Omega)^d)$.
Then, there exists a constant~$C_S > 0 $ independent of~$h \in (0,\overline h)$ and $\delta\in [0,\overline{\delta})$ such that
\begin{subequations}
\begin{align}
\label{EQN::LOW-ORDER-ENERGY-NONLINEAR}
\sup_{t \in (0, T)}\calE_h^{(0)}[\psih, \bvh, \lambdah](t) 
& \leq C_S (\|\psi_0\|_{H^2(\Omega)}+ \|\psi_1\|_{H^2(\Omega)}), \\
\label{EQN::HIGH-ORDER-ENERGY-NONLINEAR}
\sup_{t\in (0, T)} \calE_h^{(1)}[\psih, \bvh, \lambdah](t)
& \leq C_S (\|\psi_1\|_{H^2(\Omega)}+ \|\psi_{tt}(\cdot,0)\|_{H^2(\Omega)}),
\end{align}
\end{subequations}
{with~$\alphah = \dpt \psih$ in the definition of~$\calE_h^{(0)}[\psih, \bvh, \lambdah](t)$ and~$\calE_h^{(1)}[\psih, \bvh, \lambdah](t)$.
}
\end{lemma}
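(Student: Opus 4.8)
The plan is to recognize that the triplet $(\psih,\bvh,\lambdah)$ produced by Theorem~\ref{THM::FIXED-POINT} is precisely the solution of the linearized semidiscrete system~\eqref{EQN::SEMI-DISCRETE-LINEARIZED} with $\varphi=0$, $\bupsilon=0$, and $\alphah=\dpt\psih$, and that this coefficient verifies Assumption~\ref{Assumption_nondeg}: indeed $\dpt\psih\in W^{1,\infty}(0,T;\Sp)\hookrightarrow H^1(0,T;\Sp)$ since $(\psih,\bvh)\in\BK$, the nondegeneracy~\eqref{EQN::NON-DEGENERACY-ALPHA} follows from Corollary~\ref{CORR:BOUNDS_NONLINEAR_SOLUTION} (with $\ulal=\olal$ as in~\eqref{EQN::Critical_case_p}), and the smallness conditions~\eqref{EQN::ENERGY-ASSUMPTION} hold after shrinking $T$, $\overline{h}$, and $M$ as in the proof of Theorem~\ref{THM::FIXED-POINT}. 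Theorem~\ref{THM::ENERGY-STABILITY} then yields, for $i\in\{0,1\}$,
\begin{equation*}
\sup_{t\in(0,T)}\calE_h^{(i)}[\psih,\bvh,\lambdah](t)\leq(1-\sigma_0)^{-1}\,\calE_h^{(i)}[\psih,\bvh,\lambdah](0),
\end{equation*}
so the proof reduces to bounding the two initial energies by the continuous data, uniformly in $h\in(0,\overline{h})$ and $\delta\in[0,\overline{\delta})$.

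For the low-order initial energy I would use that, by the choice of the discrete initial conditions in Section~\ref{SEC::INITIAL_DATA_CHOICE}, one has $(\psih,\bvh,\lambdah)(\cdot,0)=(\psih^{(0)},\bvh^{(0)},\lambdah^{(0)})$, and, differentiating~\eqref{EQN::SEMI-DISCRETE-OPERATORS-1} and~\eqref{EQN::SEMI-DISCRETE-OPERATORS-3} in time, evaluating at $t=0$, and invoking the invertibility of the block system in~\eqref{EQN::LINEAR-RELATION-VARIABLES}, also $(\psiht,\bvht,\lambdaht)(\cdot,0)=(\psih^{(1)},\bvh^{(1)},\lambdah^{(1)})$. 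Since each $(\psih^{(i)},\bvh^{(i)},\lambdah^{(i)})$ solves the HDG discretization~\eqref{EQN::DISCRETE-INITIAL-CONDITION} of a Poisson problem with source $-\Delta\psi_i$, the standard HDG energy identity (obtained by testing~\eqref{EQN::DISCRETE-INITIAL-CONDITION} with the discrete solution itself) together with the discrete Poincar\'e inequality (see~\cite[Thm.~2.3]{Cockburn_Dong_Guzman_2008}) gives
\begin{equation*}
\Norm{\bvh^{(i)}}{L^2(\Omega)^d}^2+\Norm{\tau^{\frac12}(\lambdah^{(i)}-\psih^{(i)})}{L^2((\partial\Th)^{\mathcal I})}^2+\Norm{\tau^{\frac12}\psih^{(i)}}{L^2((\partial\Th)^{\calD})}^2+\Norm{\psih^{(i)}}{L^2(\Omega)}^2\lesssim\Norm{\Delta\psi_i}{L^2(\Omega)}^2.
\end{equation*}
Applying this for $i=0$ in the flux/facet part of $\calE_h^{(0)}$ and for $i=1$ in the kinetic term $\tfrac12\Norm{\sqrt{1+2k\alphah}\psiht}{L^2(\Omega)}^2$ (using $1+2k\alphah(\cdot,0)=1+2k\psih^{(1)}\leq1+2|k|\olal$) gives $\calE_h^{(0)}[\psih,\bvh,\lambdah](0)\lesssim\Norm{\Delta\psi_0}{L^2(\Omega)}^2+\Norm{\Delta\psi_1}{L^2(\Omega)}^2\lesssim\Norm{\psi_0}{H^2(\Omega)}^2+\Norm{\psi_1}{H^2(\Omega)}^2$, i.e.~\eqref{EQN::LOW-ORDER-ENERGY-NONLINEAR}.

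For the high-order initial energy, the flux/facet contributions coincide with those for $i=1$ above and are therefore $\lesssim\Norm{\psi_1}{H^2(\Omega)}^2$; the only new term is $\tfrac12\Norm{\sqrt{1+2k\psih^{(1)}}\psihtt(\cdot,0)}{L^2(\Omega)}^2$, which is well defined because $\psih\in W^{3,1}(0,T;\Sp)\hookrightarrow C^2([0,T];\Sp)$. I would estimate it by testing~\eqref{EQN::SEMI-DISCRETE-OPERATORS-2} at $t=0$ with $\wh=\psihtt(\cdot,0)$: expanding the tilde notation~\eqref{EQN::TILDA-NOTATION}, substituting $(\psih,\bvh,\lambdah)(\cdot,0)=(\psih^{(0)},\bvh^{(0)},\lambdah^{(0)})$ and $(\psiht,\bvht,\lambdaht)(\cdot,0)=(\psih^{(1)},\bvh^{(1)},\lambdah^{(1)})$, and using~\eqref{EQN::DISCRETE_INITIAL_DATA_2ND} for $i=0$ and $i=1$ to eliminate the HDG forms $b_h$, $s_h$, $f_h$, all boundary and flux terms collapse and one obtains
\begin{equation*}
\Norm{\sqrt{1+2k\psih^{(1)}}\psihtt(\cdot,0)}{L^2(\Omega)}^2=(c^2\Delta\psi_0+\delta\Delta\psi_1,\psihtt(\cdot,0))_{\Th}.
\end{equation*}
Evaluating the Westervelt equation~\eqref{EQN::MODEL-PROBLEM-MIXED} at $t=0$ gives $c^2\Delta\psi_0+\delta\Delta\psi_1=(1+2k\psi_1)\psi_{tt}(\cdot,0)$ in $L^2(\Omega)$; hence, by the Cauchy--Schwarz inequality, the bound $1-2|k|\ulal\leq1+2k\psih^{(1)}$, and $\Norm{1+2k\psi_1}{L^\infty(\Omega)}\leq1+2|k|M^{1/2}$, we deduce $\Norm{\sqrt{1+2k\psih^{(1)}}\psihtt(\cdot,0)}{L^2(\Omega)}\lesssim\Norm{\psi_{tt}(\cdot,0)}{L^2(\Omega)}\leq\Norm{\psi_{tt}(\cdot,0)}{H^2(\Omega)}$, which yields~\eqref{EQN::HIGH-ORDER-ENERGY-NONLINEAR}. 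The resulting constants depend only on $k$, $c$, $\ulal$, $\olal$, $\sigma_0$, $M$, $\bar\tau$, and the shape-regularity of $\{\Th\}_{h>0}$, hence neither on $h$ nor on $\delta$.

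I expect the main obstacle to be organizing the $t=0$ manipulation of~\eqref{EQN::SEMI-DISCRETE-OPERATORS-2} so that the $\delta$-weighted boundary and flux contributions recombine \emph{exactly} into the single expression $(c^2\Delta\psi_0+\delta\Delta\psi_1,\cdot)_{\Th}$ without leaving a $\delta$-dependent remainder; this is exactly where the elliptic definition~\eqref{EQN::DISCRETE-INITIAL-CONDITION} of the discrete initial data is essential (the same mechanism that makes the analogous terms vanish in Lemma~\ref{LEMMA::ERROR_INITIAL_DATA}), and trading $c^2\Delta\psi_0+\delta\Delta\psi_1$ for $(1+2k\psi_1)\psi_{tt}(\cdot,0)$ via the continuous equation at $t=0$ is the step that produces a bound that is uniform in $\delta$ and phrased in terms of $\Norm{\psi_{tt}(\cdot,0)}{H^2(\Omega)}$. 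Everything else -- the HDG Poisson stability identity, the discrete Poincar\'e inequality, and the elementary estimates on the energy functionals -- is routine.
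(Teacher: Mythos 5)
Your proposal is correct and follows the paper's overall skeleton (reduce to the linearized stability estimates with $\alphah=\dpt\psih$, then bound the initial energies using the elliptic choice of the discrete data), but the treatment of the high-order initial energy is genuinely different and more direct than the paper's. The paper splits $\dptt\psih(\cdot,0)$ via the HDG projection, bounds the discrete-error part by $\calE_h^{(1)}[\etapsih,\etavh,\etalambdah](0)$ through Lemma~\ref{LEMMA::ERROR_INITIAL_DATA}, and then invokes the approximation properties of $\piHDG$ in Lemma~\ref{LEMMA::Projections_Errors}, which leaves extra terms such as $\Norm{\dpt\dbv(\cdot,0)}{L^2(\Omega)^d}$ and $\Norm{\dptt\dpsi(\cdot,0)}{L^2(\Omega)}$ to be estimated separately. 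You instead test \eqref{EQN::SEMI-DISCRETE-OPERATORS-2} at $t=0$ with $\wh=\psihtt(\cdot,0)$, use \eqref{EQN::DISCRETE_INITIAL_DATA_2ND} for $i=0,1$ (together with the identification $(\psiht,\bvht,\lambdaht)(\cdot,0)=(\psih^{(1)},\bvh^{(1)},\lambdah^{(1)})$, which is justified exactly as you argue via the algebraic constraints and the invertibility of the block system \eqref{EQN::LINEAR-RELATION-VARIABLES}) to collapse the HDG forms into $(c^2\Delta\psi_0+\delta\Delta\psi_1,\psihtt(\cdot,0))_{\Th}$, and then trade this for $(1+2k\psi_1)\psi_{tt}(\cdot,0)$ via the PDE at $t=0$; this is the same cancellation mechanism the paper exploits in Lemma~\ref{LEMMA::ERROR_INITIAL_DATA}, but applied directly to the discrete solution, so it bypasses the projection-error machinery entirely and even yields the slightly sharper bound $\lesssim\Norm{\psi_{tt}(\cdot,0)}{L^2(\Omega)}$. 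The only weak spot is in the low-order part: you control $\Norm{\psih^{(i)}}{L^2(\Omega)}$ by invoking a ``discrete Poincar\'e inequality'' attributed to \cite[Thm.~2.3]{Cockburn_Dong_Guzman_2008}, but that theorem is a well-posedness statement, not a stability bound in this form; the clean fix is the paper's own argument, namely the triangle inequality combined with the $L^2(\Omega)$ error estimate of \cite[Cor.~2.7]{Cockburn_Dong_Guzman_2008}, which gives $\Norm{\psih^{(i)}}{L^2(\Omega)}\lesssim\Norm{\psi_i}{H^2(\Omega)}$ and leaves the rest of your argument unchanged.
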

\begin{proof}
The proof follows by considering the solution to the nonlinear semidiscrete problem in~\eqref{EQN::SEMI-DISCRETE-OPERATORS} as the solution to the linearized problem in~\eqref{EQN::SEMI-DISCRETE-LINEARIZED} with~$\alpha_h=  \pdt{1}\psih.$ 
We can then proceed similarly as in Section~\ref{SUBSECT::WELL-POSEDNESS} to 
deduce that~$(\psih,\bvh,\lambdah) \in W^{3,1}(0,T; \Sp)\times  W^{3,1}(0,T; \Qp)\times W^{3, 1}(0, T; \Mp)$. 
By using similar arguments to those for the low- and high-order energy stability estimates in Theorem~\ref{THM::ENERGY-STABILITY}, we get
\begin{subequations}
\begin{align}
\label{EQN::AUX-BOUND-STAB-NONLINEAR-1}
\sup_{t\in (0, T)} \calE_h^{(0)}[\psih, \bvh, \lambdah](t) \leq (1 - \sigma_0)^{-1} \calE_h^{(0)}[\psih, \bvh, \lambdah](0), \\
\label{EQN::AUX-BOUND-STAB-NONLINEAR-2}
\sup_{t\in (0, T)} \calE_h^{(1)}[\psih, \bvh, \lambdah](t) \leq (1 - \sigma_0)^{-1} \calE_h^{(1)}[\psih, \bvh, \lambdah](0).
\end{align}
\end{subequations}
Therefore, it only remains to bound the initial discrete energies.

The following estimate follows from the stability of the discrete HDG elliptic problem in~\eqref{EQN::DISCRETE-INITIAL-CONDITION}:
\begin{equation}\label{EQN::INITIAL_ENERGY_STAB_VECTOR}
\begin{aligned}
\Norm{\bvhi}{L^2(\Omega)^d}^2 + \Norm{\tau^{\frac12} (\lambdahi - \psihi)}{L^2((\partial \Th)^{\mathcal{I}})}^2 & + \Norm{\tau^{\frac12}\psihi}{L^2((\partial \Th)^{\calD})}^2 \\
& \leq \frac12 \Norm{\Delta \psi_i}{L^2(\Th)}^2 + \frac12 \Norm{\psihi}{L^2(\Omega)}^2 \quad \text{ for }i = {0, 1}.
\end{aligned}
\end{equation}

Using the triangle inequality and the error estimate in~\cite[Cor.~2.7]{Cockburn_Dong_Guzman_2008} for second-order elliptic problems, we obtain
\begin{equation}
\label{EQN::BOUND-PSIHi}
\Norm{\psihi}{L^2(\Omega)} \leq \Norm{\psihi - \psi_i}{L^2(\Omega)} + \Norm{\psi_i}{L^2(\Omega)} \leq \max\{1,C h^{2}\} \Norm{\psi_i}{H^2(\Omega)},
\end{equation}
for~$i={0,1}$. This shows that we can estimate the right-hand side of \eqref{EQN::INITIAL_ENERGY_STAB_VECTOR} independently of $h$.
In particular, we can estimate
\begin{equation}
\label{EQN::BOUND-DPT-PSIH-0}
\begin{split}
\Norm{\sqrt{1 + 2 k {\dpt \psih}(\cdot, 0)}\psih^{(1)}}{L^2(\Omega)}^2 & \leq (1 
+ 2|k| \olal) (\Norm{\psi_1}{L^2(\Omega)} + \Norm{\psih^{(1)} - 
\psi_1}{L^2(\Omega)} ) \\
& \leq {(1 + 2|k| \olal)\max\{1, C h^{2}\}} \Norm{\psi_1}{H^2(\Th)},
\end{split}
\end{equation}
for some  
positive constant~$C$ independent of~$h$. Bound~\eqref{EQN::LOW-ORDER-ENERGY-NONLINEAR} then follows by combining~\eqref{EQN::AUX-BOUND-STAB-NONLINEAR-1}, bounds~\eqref{EQN::INITIAL_ENERGY_STAB_VECTOR} 
and~\eqref{EQN::BOUND-PSIHi} for~$i = 0$, and~\eqref{EQN::BOUND-DPT-PSIH-0}.

By the triangle inequality, we get
\begin{equation}
\label{EQN::PSITTH_BOUND}
\begin{aligned}
\Norm{\sqrt{1 + 2k \dpt \psih(\cdot, 0)}\dptt \psih(\cdot, 0)}{L^2(\Omega)} 
\leq & \  \Norm{\sqrt{1 + 2k \dpt \psih(\cdot, 0)}\dptt \psi(\cdot, 0)}{L^2(\Omega)} \\
& + \Norm{\sqrt{1 + 2k \dpt \psih(\cdot, 0)}\dptt \dpsi (\cdot, 0)}{L^2(\Omega)}\\
& + \Norm{\sqrt{1 + 2k \dpt \psih(\cdot, 0)}\dptt \etapsih(\cdot, 0)}{L^2(\Omega)}.
\end{aligned}
\end{equation}
The third term on the right-hand side of the above inequality satisfies
\begin{equation*}
 \Norm{\sqrt{1 + 2k {\dpt \psih}(\cdot, 0)}\dptt \etapsih(\cdot, 0)}{L^2(\Omega)}^2 \leq \calE_h^{(1)}[\etapsih, \etavh, \etalambdah](0),
\end{equation*}
which can be bounded using the approximation properties in Lemma~\ref{LEMMA::Projections_Errors}  of the HDG projection~$\piHDG$
due to~\eqref{EQN::INITIAL-HIGH-ORDER-BOUND}. Moreover, the following estimates hold:
\begin{subequations}
\begin{align*}
\Norm{\dpt \dbv (\cdot,0)}{L^2(\Omega)} & \lesssim h\Norm{\psi_1}{H^{2}(\Omega)}, 
\\
\Norm{\pdt{2}\dpsi(\cdot,0)}{L^2(\Omega)} & \lesssim {h} \big(\SemiNorm{\pdt2\psi(\cdot,0)}{H^1(\Omega)} + \Norm{\pdt2\psi(\cdot,0)}{H^{2}(\Omega)}\big).
\end{align*}
\end{subequations}

Introducing these bounds into \eqref{EQN::PSITTH_BOUND}, combining it with bounds~\eqref{EQN::INITIAL_ENERGY_STAB_VECTOR} and~\eqref{EQN::BOUND-PSIHi} for~$i = 1$, and using the nondegeneracy of~$\dpt \psih$ complete the proof of bound~\eqref{EQN::HIGH-ORDER-ENERGY-NONLINEAR}.
\end{proof}

\begin{remark}[Minimum degree of approximation]
\label{REM::MINIMUM-DEGREE}
The condition~$p > \frac d 2 -1$ in the statement of Theorem~\ref{THM::FIXED-POINT}, combined with the restriction~$d \in \{2,3\}$ on the spatial dimension, imposes that the degree of 
approximation must satisfy~$p\geq 1$.

Nevertheless, in the case~$d=2$, we can ensure the nondegeneracy of~$\dpt \psih$ even for~$p = \frac d 2 -1 = 0$, by assuming smallness of the exact solution~$ \|\psi\|_{H^{{3}}(0, T; H^{s_{\psi} + 1}(\Omega))} + \Norm{\bv}{H^{{2}}(0, T; H^{s_{\bv} + 1}(\Omega)^d)}$; see equation~\eqref{EQN::Critical_case_p}. This is relevant in practice, as is shown in the numerical experiments of Section~\ref{SEC::NUMERICAL_EXPERIMENTS}.
\eremk
\end{remark}

\begin{remark}[{Omission of~$\lambdah$}]\label{remark:well_definedness_lambda}
In the definition of the ball~$\BK$, we have omitted the component~$\lambdah$ of the solution to the linearized semidiscrete problem in~\eqref{EQN::SEMI-DISCRETE-LINEARIZED}, as Theorem~\ref{THM::ERROR-ESTIMATES} does not provide an error control 
for this component.
Nonetheless, 
given the fixed-point~$(\psih,\bvh)$  of the mapping~$\mathcal{F}$, 
which solves the nonlinear semidiscrete formulation 
in~\eqref{EQN::SEMI-DISCRETE-OPERATORS}, the component~$\lambdah$ is uniquely 
determined by~$(\psih, \bvh)$ through~\eqref{EQN::SEMI-DISCRETE-OPERATORS-3}, as 
was used in the proof of Theorem~\ref{THM::FIXED-POINT}. 
In fact, one can also define the mapping~$\mathcal{F}$ in terms of the first component~$\psih$ only, as the nonlinearity solely depends on such a component.
\eremk
\end{remark}

\section{Asymptotic behaviour at the vanishing viscosity limit}\label{SEC::ASYMPTOTIC_LIMITS}
This section is dedicated to the proof of convergence of the numerical scheme as~$\delta \to 0^+$. We denote in this section by $(\psih^{(\delta)}, \bvh^{(\delta)},\lambdah^{(\delta)})$ the solution to the semidiscrete formulation~\eqref{EQN::SEMI-DISCRETE-OPERATORS},
where we have stressed the dependence of the solution on the parameter~$\delta$. Then, {we denote} the difference $$(\opsih, \obvh, \olambdah) = (\psih^{(\delta)}- \psih^{(0)}, \bvh^{(\delta)} - \bvh^{(0)}, \lambdah^{(\delta)} - \lambdah^{(0)}),$$
which satisfies the following system of equations:
\begin{subequations}
\label{EQN::DIFFERENCE_EQ}
\begin{align}
\label{EQN::DIFFERENCE_EQ-1}
\bm_h(\obvh, \brh) + b_h(\opsih, \brh) + e_h(\olambdah, \brh) & = 0 & & \hspace{\fill} \forall \brh \in \Qp, \\
\nonumber
m_h((1 + 2k\dpt\psih^{(\delta)}) \pdt2\opsih, \wh) + m_h(2k\dpt\opsih \pdt2\psih^{(0)}, \wh)  & & \\ 
\label{EQN::DIFFERENCE_EQ-2}
- c^2b_h(\wh, \obvh) + c^2 s_h(\opsih, \wh)
   + c^2 f_h(\olambdah, \wh)  & = \delta F(\wh)  & & \forall \wh \in \Sp, 
\\
\label{EQN::DIFFERENCE_EQ-3}
- e_h(\muh, \obvh) + f_h(\muh, \opsih) + g_h(\olambdah, \muh) & = 0 & & \forall \muh \in \Mp,
\end{align}
\end{subequations}
with zero initial conditions. Above, the forcing
term~$F(\wh)$ is given by 
\[F(\wh) =   b_h(\wh, \bvht^{(\delta)}) - s_h(\psiht^{(\delta)}, \wh)
   - f_h(\lambdaht^{(\delta)}, \wh).\]

\begin{theorem}[$\delta$-convergence]
\label{THM::DELTA-CONVERGENCE}
    Let the assumptions of Theorem~\ref{THM::FIXED-POINT} hold, and let~$\overline{h}$ and~$T$ be fixed as in Theorem~\ref{THM::FIXED-POINT}. Then, the family of solutions 
   $\big\{(\psih^{(\delta)}, \bvh^{(\delta)},\lambdah^{(\delta)})\big\}_{\delta \in [0,\bar{\delta})}$ converges to $(\psih^{(0)}, \bvh^{(0)},\lambdah^{(0)})$ as~$\delta \rightarrow 0^+$,
   and 
   \begin{subequations}
   \label{EQN::DELTA-CONVERGENCE}
   \begin{align}
   \label{EQN::DELTA-CONVERGENCE-1}
       \sup_{t\in(0,T)}\calE_h^{(0)}(t) [\opsih, \obvh, \olambdah] & \leq C(T) \delta,\\
    \label{EQN::DELTA-CONVERGENCE-2}
       \sup_{t\in(0,T)} \|\opsih(t)\|^2_{L^2(\Omega)} & \leq C(T) \delta^2,
       \end{align}
   \end{subequations}
where~$C(T)$ is a generic constant that depends {exponentially} on~$T$.  
\end{theorem}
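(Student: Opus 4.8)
The plan is to recognize the difference system~\eqref{EQN::DIFFERENCE_EQ} as an instance of the linearized semidiscrete HDG formulation~\eqref{EQN::SEMI-DISCRETE-LINEARIZED} with coefficient $\alphah=\psiht^{(\delta)}$, vanishing perturbation ($\bupsilon=0$), zero initial data, and a right-hand side made of the forcing $\delta F$ together with the lower-order term $-m_h(2k\,\pdt1\opsih\,\pdt2\psih^{(0)},\cdot)$, and then to run a Gr\"onwall-type energy argument on it. First I would verify the hypotheses of Theorem~\ref{THM::ENERGY-STABILITY} for this choice: the nondegeneracy~\eqref{EQN::NON-DEGENERACY-ALPHA} of $\alphah=\psiht^{(\delta)}$ follows from the uniform bound~\eqref{EQN::BOUND-DPT-PSIH} of Corollary~\ref{CORR:BOUNDS_NONLINEAR_SOLUTION} (taking $\ulal=\olal$ as in~\eqref{EQN::Critical_case_p}), and the smallness conditions~\eqref{EQN::ENERGY-ASSUMPTION} hold for $T$ sufficiently small thanks to~\eqref{EQN::BOUND-DPT-PSIH}--\eqref{EQN::BOUND-DPTT-PSIH}. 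Since $\psih^{(0)}$ is the first component of a solution of the nonlinear semidiscrete problem, Lemma~\ref{LEMMA::ENERGY-STABILITY} and Corollary~\ref{CORR:BOUNDS_NONLINEAR_SOLUTION} also furnish bounds, uniform in $h$ and $\delta$, for $\psih^{(0)},\bvh^{(0)},\lambdah^{(0)}$ and their first time derivatives in the norms entering $\calE_h^{(0)}$ and $\calE_h^{(1)}$; these will be used throughout.

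For the low-order bound~\eqref{EQN::DELTA-CONVERGENCE-1} I would follow the testing procedure in the proof of Theorem~\ref{THM::ENERGY-STABILITY}: test~\eqref{EQN::DIFFERENCE_EQ-2} with $\pdt1\opsih$, and~\eqref{EQN::DIFFERENCE_EQ-1}, \eqref{EQN::DIFFERENCE_EQ-3} (differentiated in time) with a matching combination of $c^2\obvh$ and $c^2\pdt1\olambdah$. The $b_h$- and $e_h$-terms cancel pairwise, the remaining $s_h$-, $f_h$-, $g_h$-terms complete a square, and together with the $\bm_h$- and $m_h$-contributions they assemble into $\tfrac{d}{dt}\calE_h^{(0)}[\opsih,\obvh,\olambdah](t)$, up to the commutator $\tfrac12 m_h\big(2k\,\psihtt^{(\delta)}(\pdt1\opsih)^2,1\big)$ arising from differentiating the weight $1+2k\psiht^{(\delta)}$ and the linearization-error term $m_h(2k\,\pdt1\opsih\,\pdt2\psih^{(0)},\pdt1\opsih)$; both are controlled by $C\,\calE_h^{(0)}(t)$ using the $L^\infty$-bound~\eqref{EQN::BOUND-DPTT-PSIH} on $\psihtt^{(\delta)}$ and $\psihtt^{(0)}$ together with the coercivity $\calE_h^{(0)}\ge\tfrac{1-2|k|\olal}{2}\Norm{\pdt1\opsih}{L^2(\Omega)}^2$. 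The delicate term is the forcing $\delta F(\pdt1\opsih)$, with $F$ as defined after~\eqref{EQN::DIFFERENCE_EQ}: writing the $\delta$-solution quantities in $F$ as $(\delta\text{-solution})=(0\text{-solution})+(\text{difference})$ and eliminating the ``difference'' part by means of~\eqref{EQN::DIFFERENCE_EQ-1}, \eqref{EQN::DIFFERENCE_EQ-3} differentiated in time, together with the compatibility relation~\eqref{EQN::HDG-3} for the $\delta$- and $0$-solutions, one finds that this part of $\delta F(\pdt1\opsih)$ equals $-\delta\big(\Norm{\pdt1\obvh}{L^2(\Omega)}^2+\Norm{\tau^{1/2}(\pdt1\opsih-\pdt1\olambdah)}{L^2((\partial\Th)^{\mathcal I})}^2+\Norm{\tau^{1/2}\pdt1\opsih}{L^2((\partial\Th)^{\mathcal D})}^2\big)\le0$, an extra numerical dissipation that can be discarded, whereas the ``$0$-solution'' part is a bilinear pairing of $\pdt1\opsih,\pdt1\obvh,\pdt1\olambdah$ with time derivatives of $\psih^{(0)},\bvh^{(0)},\lambdah^{(0)}$, which by Lemma~\ref{LEMMA::ENERGY-STABILITY} and Young's inequality (the $\Norm{\pdt1\obvh}{L^2(\Omega)}^2$-contribution being absorbed by the dissipation just produced) is bounded by $C\delta+C\,\calE_h^{(0)}(t)$. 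Integrating in time, using the zero initial conditions, and applying Gr\"onwall's lemma yields~\eqref{EQN::DELTA-CONVERGENCE-1} with $C(T)$ growing exponentially in $T$.

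For the sharper bound~\eqref{EQN::DELTA-CONVERGENCE-2} I would carry out a second energy estimate, testing~\eqref{EQN::DIFFERENCE_EQ-2} with $\wh=\opsih$ (instead of $\pdt1\opsih$) and~\eqref{EQN::DIFFERENCE_EQ-1}, \eqref{EQN::DIFFERENCE_EQ-3} with $c^2\obvh$, $c^2\olambdah$. Now, after the $b_h$- and $e_h$-terms cancel, the boundary terms combine into the \emph{coercive} quantity $c^2\big(\Norm{\obvh}{L^2(\Omega)}^2+\Norm{\tau^{1/2}(\opsih-\olambdah)}{L^2((\partial\Th)^{\mathcal I})}^2+\Norm{\tau^{1/2}\opsih}{L^2((\partial\Th)^{\mathcal D})}^2\big)$, which appears pointwise in time and controls $\Norm{\opsih}{L^2(\Omega)}^2$ via a discrete Poincar\'e inequality (consistent with $\obvh$ being the discrete gradient of $\opsih$, cf.~\eqref{EQN::DIFFERENCE_EQ-1}). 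The terms $m_h((1+2k\psiht^{(\delta)})\pdt2\opsih,\opsih)$ and $m_h(2k\,\pdt1\opsih\,\pdt2\psih^{(0)},\opsih)$ are put into time-derivative form through integration by parts in $t$, producing Gr\"onwall-admissible quantities plus $\int_0^t\Norm{\sqrt{1+2k\psiht^{(\delta)}}\,\pdt1\opsih}{L^2(\Omega)}^2\ds\lesssim\delta$ by~\eqref{EQN::DELTA-CONVERGENCE-1}; crucially, the ``$0$-solution'' part of $\delta F(\opsih)$ now pairs $\opsih,\obvh,\olambdah$ with \emph{uniformly bounded} $0$-solution data, so Young's inequality against the coercive term upgrades the $O(\delta)$ prefactor to $O(\delta^2)$, while the ``difference'' part of $\delta F(\opsih)$ again integrates, via~\eqref{EQN::DIFFERENCE_EQ-1}, \eqref{EQN::DIFFERENCE_EQ-3}, to $-\delta$ times a nonnegative quantity and is dropped. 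Combining with the $O(\delta)$ control of $\calE_h^{(0)}$, the discrete Poincar\'e inequality, and Gr\"onwall's lemma gives $\sup_{t\in(0,T)}\Norm{\opsih(t)}{L^2(\Omega)}^2\lesssim\delta^2$; the convergence of the family as $\delta\to0^+$ is then immediate from~\eqref{EQN::DELTA-CONVERGENCE}.

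The main obstacle is the treatment of the forcing $\delta F$: one must show that it enters the low-order estimate only at order $\delta$ and, when tested against $\opsih$ rather than $\pdt1\opsih$, at order $\delta^2$, all while controlling the HDG interface/boundary contributions $s_h(\psiht^{(\delta)},\cdot)$, $f_h(\lambdaht^{(\delta)},\cdot)$, $b_h(\cdot,\bvht^{(\delta)})$ robustly in $h$. The decomposition of $F$ into a ``$0$-solution'' part (handled by the uniform stability bounds of Lemma~\ref{LEMMA::ENERGY-STABILITY}) and a ``difference'' part (which, through~\eqref{EQN::DIFFERENCE_EQ-1}, \eqref{EQN::DIFFERENCE_EQ-3} and the compatibility condition~\eqref{EQN::HDG-3}, turns out to carry a favorable sign) is what makes this work, and is the step I expect to require the most care; it mirrors the extra-dissipation mechanism already built into the $\delta$-damped scheme through the tilde variables of~\eqref{EQN::TILDA-NOTATION}.
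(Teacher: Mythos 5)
Your treatment of the low-order estimate~\eqref{EQN::DELTA-CONVERGENCE-1} is essentially correct and close to the paper's: the same testing with $\dpt\opsih$, $\obvh$, $\dpt\olambdah$, the same reliance on Corollary~\ref{CORR:BOUNDS_NONLINEAR_SOLUTION} and Gr\"onwall, and a treatment of $\delta F(\dpt\opsih)$ that, like the paper's, rewrites $F$ through the system equations and invokes the uniform ($h$- and $\delta$-independent) high-order energy bounds of Lemma~\ref{LEMMA::ENERGY-STABILITY}; your variant, which isolates a nonpositive ``difference'' contribution and absorbs the cross terms into it, is a valid (and only cosmetically different) reorganization of the paper's estimate of $F(\dpt\opsih)$.

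The proof you propose for~\eqref{EQN::DELTA-CONVERGENCE-2} has a genuine gap. Testing~\eqref{EQN::DIFFERENCE_EQ-2} with $\wh=\opsih$ and integrating by parts in time turns $\int_0^t m_h\big((1+2k\psiht^{(\delta)})\pdt{2}\opsih,\opsih\big)\ds$ into the boundary term $m_h\big((1+2k\psiht^{(\delta)})\dpt\opsih,\opsih\big)(t)$ minus $\int_0^t \Norm{\sqrt{1+2k\psiht^{(\delta)}}\,\dpt\opsih}{L^2(\Omega)}^2\ds$ (plus a commutator). Once moved to the right-hand side, that second term enters with a plus sign and, by~\eqref{EQN::DELTA-CONVERGENCE-1}, it is only $O(\delta)$ --- your own remark that it is ``$\lesssim\delta$'' already concedes this --- so the bound produced by this route cannot be better than $O(\delta)$ and the claimed $O(\delta^2)$ does not follow. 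Moreover, nothing in this identity controls $\Norm{\opsih(t)}{L^2(\Omega)}^2$ pointwise in time: the coercive quantity $c^2\big(\Norm{\obvh}{L^2(\Omega)^d}^2+\Norm{\tau^{1/2}(\opsih-\olambdah)}{L^2((\partial\Th)^{\mathcal I})}^2+\Norm{\tau^{1/2}\opsih}{L^2((\partial\Th)^{\calD})}^2\big)$ appears only under a time integral (or multiplied by $\delta$ through the dissipation you discard), and the boundary term $m_h\big((1+2k\psiht^{(\delta)})\dpt\opsih,\opsih\big)(t)$ has no sign. The discrete Poincar\'e inequality you invoke to pass from that coercive quantity to $\Norm{\opsih}{L^2(\Omega)}^2$ is not established in the paper and its $h$-uniformity is not obvious for the single-facet stabilization~\eqref{EQN::SINGLE-FACET}; but even granting it, you would only obtain $\int_0^t\Norm{\opsih}{L^2(\Omega)}^2\ds\lesssim\delta$. (Your handling of $\delta F(\opsih)$ itself --- the difference part integrating to $-\tfrac{\delta}{2}$ times a nonnegative quantity, the $0$-solution part giving $O(\delta^2)$ after Young against the coercive term --- is correct; the failure is in the kinetic/nonlinear mass term and in the absence of pointwise $L^2$ control.) The paper gains the extra power of $\delta$ by a different device: it rewrites the nonlinearity in time-derivative form and tests with the antiderivative $\I{t'}\opsih$ (and $\I{t'}\olambdah$, $\obvh$). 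Integration by parts then yields $\int_0^{t'} m_h\big([1+k(\dpt\psih^{(\delta)}+\dpt\psih^{(0)})]\dpt\opsih,\opsih\big)\ds$, which produces the pointwise quantity $\Norm{\opsih(t')}{L^2(\Omega)}^2$ plus only $\Norm{\opsih}{L^2(\Omega)}^2$-type Gr\"onwall terms --- the problematic $\Norm{\dpt\opsih}{L^2(\Omega)}^2$ never appears --- while $\delta F(\I{t'}\opsih)$ pairs with $\I{t'}$-quantities that themselves sit squared in the Gr\"onwall functional after the time-reversal change of variables, so Young's inequality delivers $\delta^2$. Without this (or an equivalent) duality-in-time argument, estimate~\eqref{EQN::DELTA-CONVERGENCE-2} is not reached.
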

\begin{proof} 
{We prove each estimate separately.}

\paragraph{Proof of estimate~\eqref{EQN::DELTA-CONVERGENCE-1}.} The proof follows by a similar energy argument to that used to establish the low-order stability bound in {Appendix}~\ref{APP:ProofLowerOrder}. We differentiate~\eqref{EQN::DIFFERENCE_EQ-1} in time once and then take the test functions~$\brh = \obvh$, $\wh = \dpt\opsih$, and~$\muh = \dpt\olambdah$. Multiplying the first and third equations by~$c^2$, and summing the results, we get the identity
\begin{equation}
\label{EQN::CONV-IDENTITY}
\begin{split}
m_h((1 + & 2k\dpt\psih^{(\delta)}) \pdt2\opsih, \dpt\opsih)  + m_h(2k\dpt\opsih \pdt2\psih^{(0)}, \dpt\opsih )\\
& + c^2 \left(\bm_h(\dpt\obvh, \obvh) + s_h(\opsih, \dpt\opsih) + f_h(\olambdah, \dpt\opsih) + f_h(\dpt\olambdah, \opsih) + g_h(\dpt\lambdah, \dpt\olambdah)\right) \\
& =  \delta F(\dpt\opsih).
\end{split}
\end{equation}

We consider the following identities, which follow from the definition of the discrete bilinear forms in Section~\ref{SEC::DISCRETIZATION}:
\begin{align*}
m_h((1 + 2k\dpt\psih^{(\delta)}) \pdt2\opsih, \dpt\opsih)  = & \frac12 \frac{d}{dt} \Norm{\sqrt{1 + {2}k \dpt\psih^{(\delta)}} \dpt\opsih }{L^2(\Omega)}^2 - m_h(2k\pdt2 \psih^{(\delta)} \dpt\opsih, \dpt\opsih), \\
\nonumber 
\bm_h(\dpt\obvh, \obvh) &+ s_h(\opsih, \dpt\opsih) + f_h(\olambdah, \dpt\opsih) + f_h(\dpt\olambdah, \opsih) + g_h(\dpt\lambdah, \dpt\olambdah)\\
= & \frac12 \frac{d}{dt} \left(\Norm{\obvh}{L^2(\Omega)^d}^2 + \Norm{\tau^{\frac12} (\olambdah - \opsih)}{L^2((\partial \Th)^{\mathcal{I}})}^2 + \Norm{\tau^{\frac12}{\opsih}}{L^2((\partial \Th)^{\calD})}^2 \right).
\end{align*}
Using Corollary~\ref{CORR:BOUNDS_NONLINEAR_SOLUTION}, we can ensure that 
\[\int^t_0 m_h((1 + 2k\dpt\psih^{(\delta)}) \pdt2\opsih, \dpt\opsih) \ds \geq \frac{1-2|k|\olal}2 \|\dpt\opsih\|^2_{L^2(\Omega)} - 2 |k| \int^t_0\| \pdt2 \psih^{(\delta)} \|_{L^\infty(\Omega)}\|\dpt\opsih\|^2_{L^2(\Omega)}\ds, \]
where the negative term on the right-hand side will be controlled using the Gr\"onwall inequality.

It thus remains to control the term~$\delta F(\dpt \opsih)$. To this end, recall that the following equations hold:
\begin{subequations}
\begin{align*}
0 =& \, -\bm_h(\dpt \obvh, \brh) - b_h(\dpt \opsih, \brh) - e_h(\dpt \olambdah, \brh)  & & \forall \brh \in \Qp, \\
\nonumber
F(\wh) =&\,   b_h(\wh, \bvht^{(\delta)}) - s_h(\psiht^{(\delta)}, \wh)
   - f_h(\lambdaht^{(\delta)}, \wh)   & & \forall \wh \in \Sp, 
\\
0 =  &\, e_h(\muh, \dpt \bvh^{(\delta)}) - f_h(\muh, \dpt \psih^{(\delta)}) - g_h(\dpt\lambdah^{(\delta)}, \muh)  & &  \forall \muh \in \Mp. 
\end{align*}
\end{subequations}
Choosing $\brh = \dpt \bvh^{(\delta)}, \, \wh = \dpt \opsih$, and~$\muh = \dpt \olambdah$ above, and summing up the results yield the following identity:
\begin{equation*}\label{EQN::LIMITS_FORCING_TERM}
\begin{aligned}
F(\dpt\opsih) = -\bm_h(\dpt \obvh, \dpt \bvh^{(\delta)}) - s_h(\psiht^{(\delta)}, \dpt \opsih) -f_h(\lambdaht^{(\delta)}, \dpt \opsih) \\- f_h(\dpt \olambdah, \dpt \psih^{(\delta)}) - g_h(\dpt\lambdah^{(\delta)}, \dpt \olambdah).
\end{aligned}
\end{equation*}

By the definition of~$(\obvh, \opsih, \olambdah)$ and the Young inequality, we get
\begin{align*}
F(\dpt\opsih) = & - \Norm{\bvht^{(\delta)}}{L^2(\Omega)^d}^2 - \Norm{\tau^{\frac12}(\lambdaht^{(\delta)} - \psiht^{(\delta)})}{L^2((\partial \Th)^{\mathcal{I}})}^2 - \Norm{\tau^{\frac12}\psiht^{(\delta)}}{L^2((\partial \Th)^{\calD})}^2 \\
& + \big(\bvht^{(\delta)}, \bvht^{(0)}\big)_{\Omega} + \big(\tau (\psiht^{(\delta)} - \lambdaht^{(\delta)}), \psiht^{(0)} - \lambdaht^{(0)}\big)_{(\partial \Th)^{\mathcal{I}}} + \big(\tau \psiht^{(\delta)}, \psiht^{(0)}\big)_{(\partial \Th)^{\calD}} \\
\leq & \  3c^{-2} \calE_h^{(1)}[\psih^{(\delta)}, \bvh^{(\delta)}, \lambdah^{(\delta)}](t) + c^{-2} \calE_h^{(1)}[\psih^{(0)}, \bvh^{(0)}, \lambdah^{(0)}](t).
\end{align*}

Moreover, for all~$\tilde{t} \in (0, T)$,
\begin{equation*}
\int_0^{\tilde{t}} F(\dpt\opsih) \dt \leq 3c^{-2}\tilde{t}  \left(\sup_{t\in (0, \tilde{t})} \calE_h^{(1)}[\psih^{(\delta)}, \bvh^{(\delta)}, \lambdah^{(\delta)}](t) + 
\sup_{t\in (0, \tilde{t})} \calE_h^{(1)}[\psih^{(0)}, \bvh^{(0)}, \lambdah^{(0)}](t)\right).
\end{equation*}

Thus, by the energy stability estimates in Lemma~\ref{LEMMA::ENERGY-STABILITY}, the right-hand side is uniformly bounded with respect to both~$\delta$ and~$h$.
Inserting the above estimates into~\eqref{EQN::CONV-IDENTITY} and using the Gr\"onwall inequality yield estimate~\eqref{EQN::DELTA-CONVERGENCE-1}.

\paragraph{Proof of estimate~\eqref{EQN::DELTA-CONVERGENCE-2}.} 
We follow the approach in~\cite[Thm. 2 in \S 5.2]{Meliani_2023} for establishing asymptotic behavior of wave equations in weak topologies. 
For simplicity of notation, 
we introduce the operator
\[ \I{t'}u (t) := 
\left\{\begin{array}{ll}
			\int^{t'}_{t} u(s) \ds \quad  & \textrm{ if} \ \ 0\leq t\leq t',\\
			0 \quad  & \textrm{ if} \ \ t'\leq t\leq T.
		\end{array}\right. \]
We can then manipulate the system of equations in~\eqref{EQN::DIFFERENCE_EQ} to 
obtain
\begin{subequations}
\label{EQN::DIFFERENCE_EQ_rewritten}
\begin{align}
\bm_h(\I{t'}\obvh, \brh) + b_h(\I{t'}\opsih, \brh) + e_h(\I{t'}\olambdah, \brh)  & = 0 & & \forall \brh \in \Qp, \\
\nonumber
m_h(\pdt2\opsih + k \dpt(\dpt\opsih \dpt \psih^{(\delta)} + \dpt\opsih \dpt \psih^{(0)}), \wh)  & \\- c^2b_h(\wh, \obvh) + c^2 s_h(\opsih, \wh)
   + c^2 f_h(\olambdah, \wh)  & = \delta F(\wh)  & & \forall \wh \in \Sp, 
\\
- e_h(\muh, \obvh) + f_h(\muh, \opsih) + g_h(\olambdah, \muh) & = 0 & & \forall \muh \in \Mp,
\end{align}
\end{subequations}
where, in the second equation, we have used the identity
\[\dpt(\dpt\opsih \dpt \psih^{(\delta)} + \dpt\opsih \dpt \psih^{(0)}) = 2 \dpt\psih^{(\delta)}\pdt{2}\psih^{(\delta)} - 2 \dpt\psih^{(0)}\pdt{2}\psih^{(0)}. \]
We then choose the test functions~$\brh = \obvh$, $\wh = \I{t'}\opsih$, and~$\muh = \I{t'}\olambdah$.
Multiplying the first and third equations in~\eqref{EQN::DIFFERENCE_EQ_rewritten} by~$c^2$ and summing the results, we get the identity
\begin{align*}
       m_h(\pdt2\opsih + & k \dpt(\dpt\opsih \dpt \psih^{(\delta)} + \dpt\opsih \dpt \psih^{(0)}), \I{t'}\opsih)  \\ 
       & + c^2 \big(\bm_h(\I{t'}\obvh, \obvh) +s_h(\opsih, \I{t'}\opsih) + f_h(\olambdah, \I{t'}\opsih) + f_h(\I{t'}\olambdah, \opsih) + g_h(\olambdah, \I{t'}\olambdah)\big) \\
    & = \delta F(\I{t'}\opsih),
\end{align*}
which we integrate by parts in time on~$(0,t')$ to obtain
\begin{align}
\nonumber
      \int^{t'}_0 m_h  &([ 1 + k (\dpt \psih^{(\delta)} + \dpt \psih^{(0)})]\dpt\opsih, \opsih) \ds  \\ 
\label{EQN:ENERGY_LIMITS_LOWER_INTEGRATED}
      & + \int^{t'}_0 c^2 \big[\bm_h(\I{t'}\obvh, \obvh) +s_h(\opsih, \I{t'}\opsih) + {2}f_h(\olambdah, \I{t'}\opsih) 
      + g_h(\olambdah, \I{t'}\olambdah)\big] \ds = \delta F(\I{t'}\opsih).
    \end{align}
For the first term on the left-hand side, we make use of the following identity:
\begin{equation*}
\begin{aligned}\label{EQN:LIMITS::PSITT_TERM_LOWER}
m_h([1 + k (\dpt \psih^{(\delta)} + \dpt \psih^{(0)})]\dpt\opsih, \opsih)  = & \frac12 \frac{d}{dt} \Norm{\sqrt{1 + k (\dpt \psih^{(\delta)} + \dpt \psih^{(0)})} \opsih }{L^2(\Omega)}^2 \\ &- m_h(k (\pdt2 \psih^{(\delta)} + \pdt2 \psih^{(0)})\opsih, \opsih).\\
\end{aligned}
\end{equation*}

The positivity of~$1 + k (\dpt \psih^{(\delta)} + \dpt \psih^{(0)}) >0$ follows from bound~\eqref{EQN::BOUND-DPT-PSIH} in~Corollary~\ref{CORR:BOUNDS_NONLINEAR_SOLUTION}.
Further, {by} using the H\"older inequality {and bound~\eqref{EQN::BOUND-DPTT-PSIH}}, we obtain
\begin{equation*}
\begin{aligned}
m_h(k (\pdt2 \psih^{(\delta)} + \pdt2 \psih^{(0)})\opsih, \opsih) \leq &\|\pdt{2} \psih^{(\delta)} + \pdt2 \psih^{(0)}\|_{L^\infty(0,T; L^\infty(\Omega))} \|\opsih\|^2_{L^2(\Omega)} 
\\
\lesssim &\|\opsih\|^2_{L^2(\Omega)}.
\end{aligned}
\end{equation*}

Since~$\dpt \I{t'} u (t) = - u(t)$, we {can} write
\begin{equation*}\label{EQN:LIMITS::GRADIENT_TERM_LOWER}
\begin{aligned}
\bm_h(\I{t'}\obvh, \obvh) &+s_h(\opsih, \I{t'}\opsih) + {2}f_h(\olambdah, \I{t'}\opsih) 
+ g_h(\olambdah, \I{t'}\olambdah)\\= & - \frac12 \frac{d}{dt} \left(\Norm{\I{t'}\obvh}{L^2(\Omega)^d}^2 + \Norm{\tau^{\frac12} \I{t'}(\olambdah - \opsih)}{L^2((\partial \Th)^{\mathcal{I}})}^2 + \Norm{\tau^{\frac12}\I{t'}\opsih}{L^2((\partial \Th)^{\calD})}^2 \right).
\end{aligned}
\end{equation*}

It only remains to treat the forcing term~$F(\I{t'}\opsih)$. To this end, we proceed similarly as in the proof of estimate~\eqref{EQN::DELTA-CONVERGENCE-1}, and obtain
\begin{align*}
\nonumber
F(\I{t'} \opsih) = & -\bm_h(\I{t'} \obvh, \dpt \bvh^{(\delta)}) - s_h(\psiht^{(\delta)}, \I{t'} \opsih) \\
\nonumber
& - f_h(\lambdaht^{(\delta)}, \I{t'} \opsih) - f_h(\I{t'} \olambdah, \dpt \psih^{(\delta)}) - g_h(\dpt\lambdah^{(\delta)}, \I{t'} \olambdah)\\
\nonumber = & -\big(\I{t'} \obvh, \dpt \bvh^{(\delta)}\big)_{\Omega} -  \big(\tau \I{t'}(\olambdah - \opsih), \psiht^{(\delta)} - \lambdaht^{(\delta)}\big)_{(\partial \Th)^{\mathcal{I}}} - \big(\tau \I{t'}\opsih, \psiht^{(\delta)}\big)_{(\partial \Th)^{\calD}} \\ 
        \lesssim & \big(\Norm{\I{t'}\obvh}{L^2(\Omega)^d} + \Norm{\tau^{\frac12} \I{t'}(\olambdah - \opsih)}{L^2((\partial \Th)^o)} + \Norm{\tau^{\frac12}\I{t'}\opsih}{L^2((\partial \Th)^{\calD})} \big).
\end{align*}
In the last line, we have used the uniform-in-$\delta$ estimate of the high-order energy    ~$\calE_h^{(1)}[\psih^{(\delta)}, \bvh^{(\delta)}, \lambdah^{(\delta)}]$ {from Lemma~\ref{LEMMA::ENERGY-STABILITY}}. 
Putting the above estimates into {identity}~\eqref{EQN:ENERGY_LIMITS_LOWER_INTEGRATED}, we obtain
\begin{equation}\label{EQN::LIMITS_FORCING_TERM_LOWER}
\begin{split}
   & \Norm{\opsih (t') }{L^2(\Omega)}^2  + \Norm{\I{t'}\obvh(0)}{L^2(\Omega)^d}^2 + \Norm{\tau^{\frac12} \I{t'}(\olambdah - \opsih)(0)}{L^2((\partial \Th)^{\mathcal{I}})}^2 + \Norm{\tau^{\frac12}\I{t'}\opsih(0)}{L^2((\partial \Th)^{\calD})}^2  \\ 
   & \quad \lesssim \int^{t'}_0  \|\opsih\|^2_{L^2(\Omega)} \ds  + \delta\int^{t'}_0 \big(\Norm{\I{t'}\obvh}{L^2(\Omega)^d} + \Norm{\tau^{\frac12} \I{t'}(\olambdah - \opsih)}{L^2((\partial \Th)^{\mathcal{I}})} + \Norm{\tau^{\frac12}\I{t'}\opsih}{L^2((\partial \Th)^{\calD})} \big) \ds,
\end{split}
\end{equation}
where the hidden constant does not depend
on~$\delta$
or~$h$. 

In order to rewrite~\eqref{EQN::LIMITS_FORCING_TERM_LOWER} in a suitable form so as to be able to use the Gr\"onwall inequality, 
we introduce the time-reversed operator~$\widetilde{\operatorname{I}}_{t'}${,} which we define for an integrable function~$u$ and~$t\in(0,T)$ by 
\[\widetilde{\operatorname{I}}_{t'} u (t) := \I{t'} u (t'- t).\]

{By the definition of~$\widetilde{\operatorname{I}}_{t'}$, bound} \eqref{EQN::LIMITS_FORCING_TERM_LOWER} can be conveniently rewritten as
\begin{align*}
   & \Norm{\opsih (t') }{L^2(\Omega)}^2 + \Norm{\widetilde{\operatorname{I}}_{t'}\obvh(t')}{L^2(\Omega)^d}^2 + \Norm{\tau^{\frac12} \widetilde{\operatorname{I}}_{t'}(\olambdah - \opsih)(t')}{L^2((\partial \Th)^o)}^2 + \Norm{\tau^{\frac12}\widetilde{\operatorname{I}}_{t'}\opsih(t')}{L^2((\partial \Th)^{\calD})}^2 \\ 
   & \quad \lesssim \delta^2 +\int^{t'}_0 \big(  \|\opsih\|^2_{L^2(\Omega)} + \Norm{\widetilde{\operatorname{I}}_{t'}\obvh}{L^2(\Omega)^d}^2 + \Norm{\tau^{\frac12} \widetilde{\operatorname{I}}_{t'}(\olambdah - \opsih)}{L^2((\partial \Th)^{\mathcal{I}})}^2 + \Norm{\tau^{\frac12}\widetilde{\operatorname{I}}_{t'}\opsih}{L^2((\partial \Th)^{\calD})}^2 \big)\ds,
\end{align*}
where we {have} additionally used the Young inequality to get~$\delta^2$ on the right-hand side. The Gr\"onwall inequality yields then the desired result.
\end{proof}

\section{Numerical experiments}\label{SEC::NUMERICAL_EXPERIMENTS}
In this section, we assess the accuracy and robustness of the proposed method. 
In Section~\ref{SECT::FULLY-DISCRETE}, we present some details for the implementation of the fully discrete scheme obtained by combining the semidiscrete formulation~\eqref{EQN::SEMI-DISCRETE-OPERATORS} with the Newmark time-marching scheme. 
The~$h$- and~$\delta$-convergence of the proposed method are illustrated in Sections~\ref{SECT::NUM-h-convergence} and~\ref{SECT::DELTA-ERROR}, respectively.
In Section~\ref{SECT::WAVEFRONT}, we present an example of the effect of the nonlinearity parameter~$k$ on the solution.

Although our theory does not provide any superconvergence result, in the numerical experiments below, we consider the following local postprocessing technique (see~\cite[\S2.2]{Cockburn-ETAL-2018}): given the numerical approximation~$(\psih, \bvh, \lambdah)$ of the solution to~\eqref{EQN::MODEL-PROBLEM-MIXED} at some time~$t \geq 0$, we define~$\psih^* \in \mathcal{S}_h^{p+1}(\Th)$ such that, for all~$K \in \Th$, it satisfies
\begin{subequations}
\label{EQN::POSTPROCESSING}
\begin{align}
\int_K \nabla \psih^* \cdot \nabla q_{p+1} \dx & = \int_K \bvh \cdot \nabla q_{p+1}\dx \qquad  \forall q_{p+1} \in \Pp{p+1}{K}, \\
\int_K \psih^* \dx & = \int_K \psih \dx.
\end{align}
\end{subequations}

For the HDG discretization in~\cite{Cockburn-ETAL-2018} of the linear wave equation, the postprocessed variable~$\psih^*$ was shown to superconverge with order~$\mathcal{O}(h^{p+2})$ if~$p > 0$. 
Such a superconvergence is also numerically observed in~Section~\ref{SECT::NUM-h-convergence} below for the nonlinear Westervelt equation.

An object-oriented MATLAB implementation of the fully discrete scheme described in the next section was developed to carry out the numerical experiments in two-dimensional domains. 

\subsection{Fully discrete scheme\label{SECT::FULLY-DISCRETE}}
We use the predictor-corrector Newmark scheme in~\cite[\S5.4.2]{Kaltenbacher_2007} as time discretization. 
Let~$\Delta t$ be a fixed time step, $tol > 0$ be a given tolerance, $s_{\max}$ be a maximum number of linear iterations, and~$(\gamma, \beta)$ be the Newmark parameters with~$\gamma \in [0, 1]$ and~$\beta \in [0, 1/2]$. 
In the numerical experiments below, 
we use~$tol = 10^{-10}$ and~$s_{\max} = 100$, and it will be useful to consider an inhomogeneous forcing term~$\varphi : \QT \rightarrow \IR$.
For convenience, we use the dot notation for discrete approximations of time derivatives. 

In Algorithm~\ref{ALG::FULLY-DISCRETE}, we describe 
an implementation of the proposed fully discrete scheme.

\begin{algorithm}[H]
\caption{\sc Newmark-HDG fully discrete scheme}
\label{ALG::FULLY-DISCRETE}
\SetKw{INPUT}{Input:}
\SetKw{TO}{ to }
\SetKw{SET}{Set}
\SetKw{STOP}{stop}
\SetKw{COMPUTE}{Compute}
\SetKw{SOLVE}{Solve}
\SET a time step~$\Delta t > 0$. \\
\SET a tolerance~$tol > 0$ and a maximum number of linear iterations~$s_{\max} \in \IN$. \\
\COMPUTE the coefficient~$\mu = c^2(\Delta t)^2 \beta + \delta \gamma \Delta t$ and the number of time steps~$N_T = T/\Delta t$.\\
\COMPUTE the Schur complement matrices
\begin{equation*}
\calS_{\psi}  = S + B^T \bM^{-1} B, \quad \calA_{\lambda}  = G + E^T \bM^{-1} E, \quad \text{ and }  \quad \calR_{\lambda}  = F + B^T \bM^{-1} E.
\end{equation*} \\
\SOLVE the matrix systems\footnote{The matrix systems in~\eqref{EQN::MATRIX-SYSTEMS} can be solved completely in parallel due to the block-diagonal structure of the matrices~$M$, $\bM$, and~$S_{\psi}$.} for the pairs~$(\bX, Y)$ and~$(\overline{\bX}, \overline{Y})$ 
\begin{equation}
\label{EQN::MATRIX-SYSTEMS}
\begin{cases}
 (M + \mu S_{\psi}) Y = \mu \calR_{\lambda}, \\
 \bM \bX = E - BY.
\end{cases}
\qquad
\begin{cases}
\calS_{\psi} \overline{Y} = \calR_{\lambda},\\
\bM \overline{\bX} = E - B\overline{Y}.
\end{cases}
\end{equation} \\
\COMPUTE the auxiliary matrices
$$\calS_{\lambda, \mu} = G + 
E^T \bX -F^T Y  \quad \text{ and } \quad  \bar{\calS}_{\lambda} = G + E^T \overline{\bX} -F^T \overline{Y}.$$ \\
\COMPUTE the discrete initial conditions~$(\Psih^{(0)}, \Vh^{(0)}, \Lambdah^{(0)})$ and~$(\dot{\Psi}_h^{(0)}, \dot{\V}_h^{(0)}, \dot{\Lambda}_h^{(0)})$ by solving~\eqref{EQN::DISCRETE-INITIAL-CONDITION}. \\
\SOLVE the linear systems\footnote{Here, $N_h(\cdot)$ is the block-diagonal matrix described in Remark~\ref{REM::STRUCTURE-N}.} for~$(\ddot{\Psi}_h^{(0)}, \ddot{\Lambda}_h^{(0)})$
    \begin{equation*}
    N_h(\dot{\Psi}_h^{(0)}) \ddot{\Psi}_h^{(0)} = -c^2 \big(\calS_{\psi}\tPsih^{(0)} + \calR_{\lambda} \tLambdah^{(0)}\big)
    \quad \text{ and } \quad \calA_{\lambda} \ddot{\Lambda}_h^{(0)} = - \calR_{\lambda}^T \ddot{\Psi}_h^{(0)}.
    \end{equation*}
    \\
\For {$n = 0$ \TO $N_T$}{
\CommentSty{\textcolor{blue}{\%\%\% PREDICTOR STEP \%\%\%}}\\
\COMPUTE the approximations
\begin{alignat*}{3}
\widehat{\Psi}_h^{(n + 1)} & = \Psih^{(n)} + \Delta t \dot{\Psi}_h^{(n)} + \frac{(\Delta t)^2}{2} (1 - 2 \beta) \ddot{\Psi}_h^{(n)}, & \quad \widehat{\dot{\Psi}}_h^{(n + 1)} & = \dot{\Psi}^{(n)} + ( 1 - \gamma) \Delta t \ddot{\Psi}_h^{(n)},  \\
\widehat{\Lambda}_h^{(n + 1)} & = \Lambdah^{(n)} + \Delta t \dot{\Lambda}_h^{(n)} + \frac{(\Delta t)^2}{2} (1 - 2 \beta) \ddot{\Lambda}_h^{(n)},& \quad 
\widehat{\dot{\Lambda}}_h^{(n + 1)} & = \dot{\Lambda}^{(n)} + ( 1 - \gamma) \Delta t \ddot{\Lambda}_h^{(n)}, \\
\widetilde{\widehat{\Psi}}_h^{(n+1)} & = \widehat{\Psi}_h^{(n+1)} + \frac{\delta}{c^2} \widehat{\dot{\Psi}}_h^{(n + 1)}, & \quad \widetilde{\widehat{\Lambda}}_h^{(n+1)} & = \widehat{\Lambda}_h^{(n+1)} + \frac{\delta}{c^2} \widehat{\dot{\Lambda}}_h^{(n + 1)}.
\end{alignat*}\\ 
\COMPUTE the~$n$th step vector\footnote{Here, $\Phi^{n+1}$ is the vector representation of the forcing term~$\varphi$ at~$t = t_{n+1}$.}
$\calL^n = \Phi^{n+1}-c^2 \big(\calS_{\psi} \widetilde{\widehat{\Psi}}_h^{(n+1)} + \calR_{\lambda} \widetilde{\widehat{\Lambda}}_h^{(n+1)} \big).$ \\
\CommentSty{\textcolor{blue}{\%\%\% CORRECTOR STEP \%\%\%}}\\
\For {$s = 1$ \TO $s_{\max}$}{
\COMPUTE \hspace{0.68in} $R^{(n+1, s)} =  \big(M - N_h(\dot{\Psi}_h^{(n + 1, s)})\big) \ddot{\Psi}_h^{(n + 1, s)} + \calL^n$. \\
\SOLVE \footnote{The linear systems in lines~\textbf{16} and~\textbf{18} can be solved in parallel due to the block-diagonal structure of the matrix~$M + \mu S_\psi$.}
\hspace{0.2in}
 $(M + \mu \calS_{\psi}) Z^{(n + 1, s)} = R^{(n+1, s)}$. \\
\SOLVE\footnote{The linear system in line~\textbf{17} involves the solution of a statically condensed linear system, where the unknowns are associated with degrees of freedom on~$(d - 1)$-dimensional mesh facets only.} 
\hspace{0.5in}
$\calS_{\lambda, \mu} \ddot{\Lambda}_h^{(n+1, s + 1)} = -\calR_{\lambda}^T Z^{(n+ 1, s)}$. \\
\SOLVE \hspace{0.13in} $(M + \mu S_{\psi}) \ddot{\Psi}_h^{(n + 1, s + 1)} = R^{(n + 1, s)} - \mu \calR_{\lambda}\ddot{\Lambda}_h^{(n + 1, s + 1)}$. \\
\COMPUTE \hspace{0.55in} $\dot{\Psi}_h^{(n + 1, s + 1)} = \widehat{\dot{\Psi}}_h^{(n + 1)} + \gamma \Delta t \ddot{\Psi}_h^{(n + 1, s + 1)}$. \\
\CommentSty{\textcolor{blue}{\%\%\% STOPPING CRITERIA
\%\%\%}}\\
\If {$\Norm{\Psih^{n+1, s + 1} - \Psih^{n+1, s}}{} {\big/} \Norm{\Psih^{n+1, s + 1} }{} < tol $}
{\STOP}
}
}
\end{algorithm}
\subsection{\texorpdfstring{$h$}{h}-convergence \label{SECT::NUM-h-convergence}}
In order to assess the accuracy in space of the proposed method, 
we consider the Westervelt equation in~\eqref{EQN::MODEL-PROBLEM} on the domain~$Q_T = (0, 1)^2 \times (0, T)$, with parameters~$c = 100\text{ms}^{-1},$ $\delta = 6\times 10^{-9}\text{ms}^{-1}$, and~$k = 0.5 \text{s}^2\text{m}^{-2}$. We add a forcing term~$\varphi : Q_T \rightarrow \IR$ and set the initial data such that the exact solution is given by
\begin{equation}
\label{EQN::EXACT-SOLUTION}
\psi(x, y, t) = A \sin(\omega t) \sin(\ell x) \sin(\ell y),
\end{equation}
with~$A = 10^{-2} \text{m}^2 \text{s}^{-1}$, $\omega = 3.5\pi \text{Hz}$, and~$\ell = \pi \text{m}^{-1}$; cf. \cite[\S6]{Meliani_Nikolic_2023}.

We consider a set of structured simplicial meshes~$\{\Th\}_{h>0}$ for the spatial domain~$\Omega$, which we exemplify in Figure~\ref{fig:h-convergence}(left panel in the first row). We set the parameters~$(\gamma,\, \beta) = (1/2,\, 1/4)$ for the Newmark scheme, which guarantee second-order accuracy in time and unconditional stability in the linear setting (see, e.g., \cite[\S9.1.2]{Hughes_2000}). 
The time step is chosen as~$\Delta t = \mathcal{O}(h^{\frac{p+2}{2}})$, so as to balance the expected convergence rates of order~$\mathcal{O}(h^{p+2})$ for the postprocessed approximation~$\psih^*$ with the second-order accuracy of the Newmark scheme. 

In Figure~\ref{fig:h-convergence}, we show in~\emph{log-log} scale the following errors at the final time~$T = 1$s:
\begin{equation}
\label{EQN::h-ERRORS}
\Norm{\psi(\cdot, T) - \psih^{(N_T)}}{L^2(\Omega)}, \quad \Norm{\psi(\cdot, T) - {\psih^*}^{(N_T)}}{L^2(\Omega)}, \quad \Norm{\bv(\cdot, T) - \bvh^{(N_T)}}{L^2(\Omega)^2}.
\end{equation} 

For~$p = 0, 1, 2$, optimal convergence rates of order~$\mathcal{O}(h^{p+1})$ are obtained for the $L^2(\Omega)$-errors of~$\psih$ and~$\bvh$, which is in agreement with the \emph{a priori} error estimates derived in Section~\ref{SEC::NONLINEAR_WESTERVELT} for the semidiscrete HDG formulation. 
Moreover, when~$p > 0$, superconvergence of order~$\mathcal{O}(h^{p+2})$ is observed for the~$L^2(\Omega)$-error of the postprocessed variable~$\psih^*$ defined in~\eqref{EQN::POSTPROCESSING}.

\begin{figure}[!ht]
    \centering
    \includegraphics[width = 0.4\textwidth]{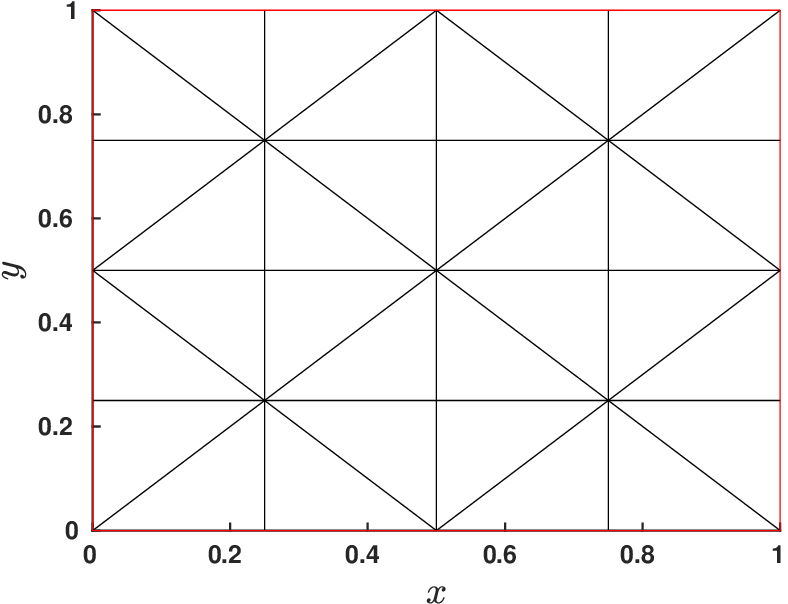}
    \hspace{0.2in}
    \includegraphics[width = 0.4\textwidth]{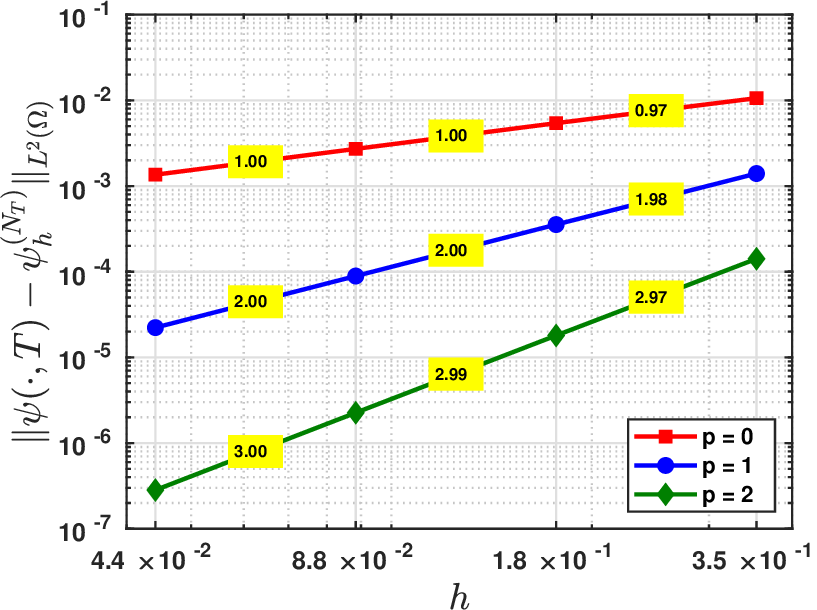}\\[2em]
    \includegraphics[width = 0.4\textwidth]{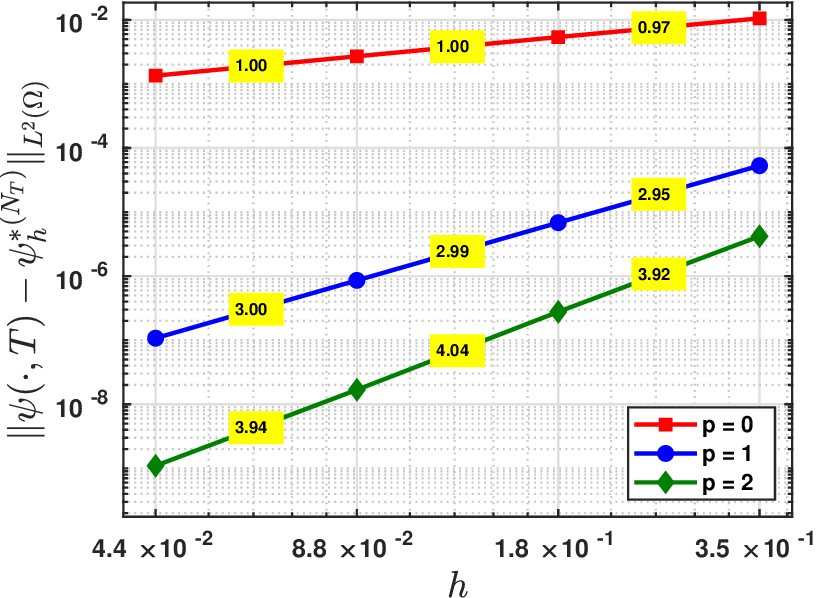}
    \hspace{0.2in}
    \includegraphics[width = 0.4\textwidth]{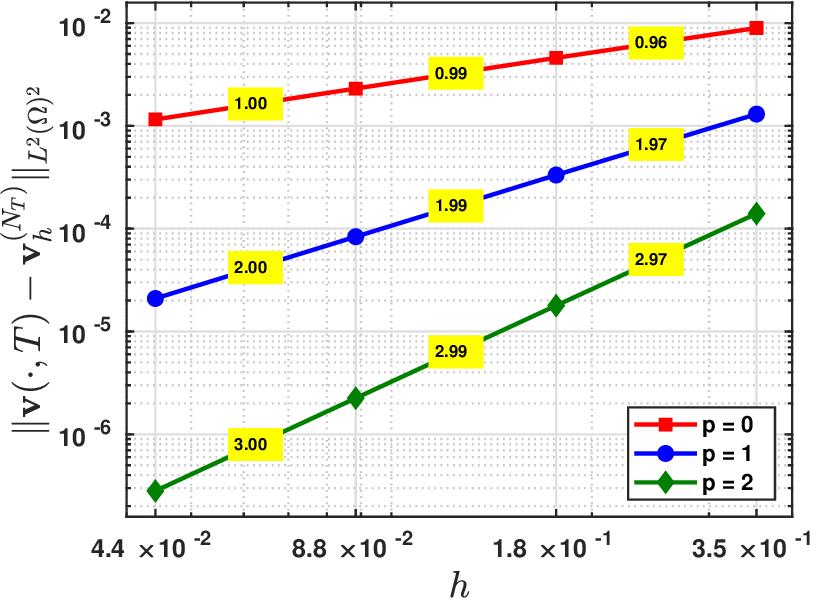}
    \caption{\textbf{First panel:} Example of the simplicial meshes used in the numerical examples. \textbf{Remaining panels:} $h$-convergence of the errors in~\eqref{EQN::h-ERRORS} at the final time~$T = 1$s for the test case with exact solution~\eqref{EQN::EXACT-SOLUTION}. The
numbers in the yellow rectangles denote the experimental rates of convergence.}
    \label{fig:h-convergence}
\end{figure}

\subsection{\texorpdfstring{$\delta$}{}-convergence \label{SECT::DELTA-ERROR}}
We now validate the convergence of the method when the sound diffusivity parameter~$\delta$ tends to zero.
To do so, we consider the Westervelt equation in~\eqref{EQN::MODEL-PROBLEM} on the domain~$\QT = (0, 1)^2 \times (0, T)$, with parameters~$c = 1 \text{m}\text{s}^{-1}$ and~$k = 0.3 \text{s}^2 \text{m}^{-2}$. The initial data are given by
\begin{equation}
\label{EQN::INITIAL-DELTA-CONVERGENCE}
\psi_0(x, y) = 10^{-2} \sin(\pi x) \sin(\pi y), \qquad \psi_1(x, y) = \sin(\pi x) \sin (\pi y),
\end{equation}
the spatial mesh is taken as in Figure~\ref{fig:h-convergence}(left panel in the first row), and the parameters~$(\gamma, \beta)$ and the time step is chosen as in the previous experiment; cf. \cite[\S2.4.2]{Dorich_Nikolic_2024}. 
We consider piecewise constant~$(p = 0)$ and piecewise linear~$(p = 1)$ approximations, and~$\delta = 10^{-2i}$ with~$i = 1, \ldots, 5$.

In Figure~\ref{fig:delta-convergence}, we show in \emph{log-log} scale the following errors computed at the final time~$T = 1$s:
\begin{equation}
\label{EQN::DELTA-ERRORS}
\Norm{\psih^{(\delta)} - \psih^{(0)}}{L^2(\Omega)} \quad \text{ and } \quad \Norm{\bvh^{(\delta)} - \bvh^{(0)}}{L^2(\Omega)^2}.
\end{equation}

Convergence rates of order~$\mathcal{O}(\delta)$ are observed for both errors. For~$p = 1$, these results are in agreement with estimate~\eqref{EQN::DELTA-CONVERGENCE-2}, and suggest that estimate~\eqref{EQN::DELTA-CONVERGENCE-1} may be not sharp. 
In fact, in~\cite[Thm.~2.2]{Dorich_Nikolic_2024}, convergence rates of order~$\mathcal{O}(\delta)$ were established for the standard finite element method by exploiting the relation~$\bvh = \nabla \psih$. 
Moreover, it is likely that the exact solution is more regular than assumed in Theorem~\ref{THM::FIXED-POINT}, in which case one could show full convergence rates of order~$\mathcal{O}(\delta)$ 
in~\eqref{EQN::DELTA-CONVERGENCE-1}, by deriving higher-order energy stability estimates. 
\begin{figure}[!ht]
    \centering
    \includegraphics[width = 0.4\textwidth]{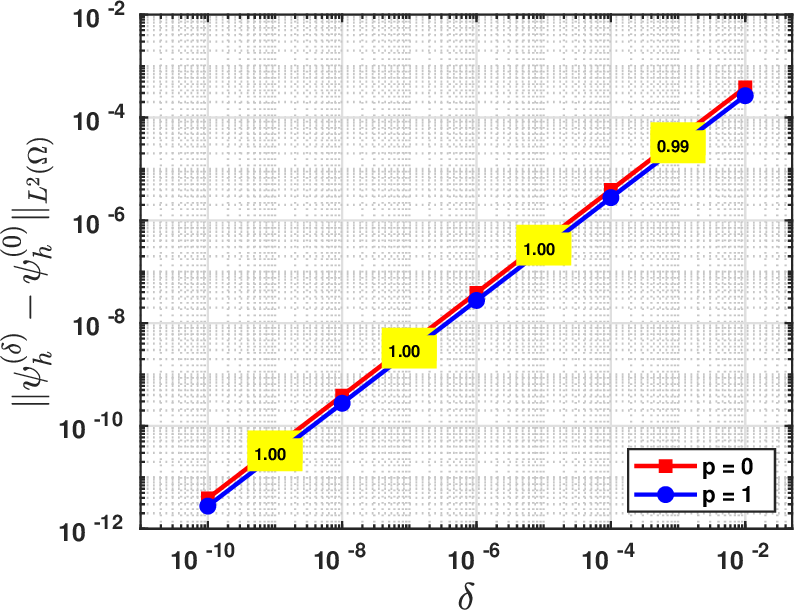} 
    \hspace{0.2in}
    \includegraphics[width = 0.4\textwidth]{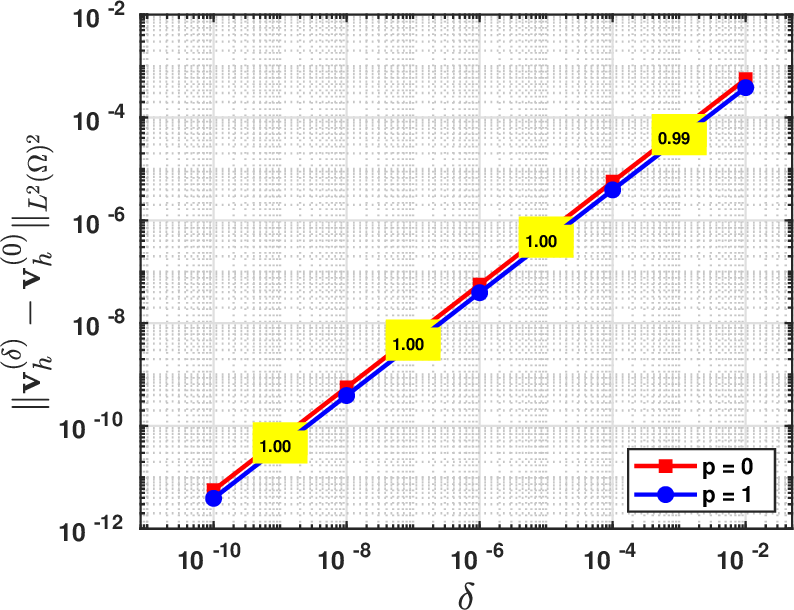} 
    \caption{$\delta$-convergence of the errors in~\eqref{EQN::DELTA-ERRORS} at the final time~$T = 1$s for the test case in Section~\ref{SECT::DELTA-ERROR}.}
    \label{fig:delta-convergence}
\end{figure}

\subsection{Steepening of a wavefront \label{SECT::WAVEFRONT}}
In this experiment, we illustrate the effect of the nonlinearity parameter~$k$ on the solution. We consider the Westervelt equation in~\eqref{EQN::MODEL-PROBLEM} on the domain~$\QT = (0, 1)^2 \times (0, T)$, with parameters~$c = 1500 \text{m}\text{s}^{-1}$, $\delta = 6\times 10^{-9} \text{m}\text{s}^{-1}$, and~$k = -10\text{s}^2 \text{m}^{-2}$. We consider homogeneous initial conditions and the following forcing term:
\begin{equation}
\label{EQN::SOURCE-TERM-STEEPENING}
\varphi(x, y, t) = \frac{a}{\sqrt{\sigma}} \exp(-\alpha t) \exp\Big(-\frac{(x - 0.5)^2 + (y - 0.5)^2}{2\sigma^2}\Big),
\end{equation}
where~$a = 400$, $\alpha = 5 \times 10^4$, and~$\sigma = 3 \times 10^{-2}$; cf. \cite[\S6]{Meliani_Nikolic_2023}.

We employ a simplicial mesh~$\Th$ with~$h \approx 8.83\times 10^{-2}$, a fixed time step~$\Delta t = 10^{-6}$, and~$p = 5$. 
In order to deal with the steepening of the wave, the parameters for the Newmark scheme are chosen as~$(\gamma, \beta) = (0.85, 0.45)$.
In Figure~\ref{fig:steepening-plot}(left panels), 
we show the approximation of~$\partial_t \psi$ obtained at~$t = 5 \times 10^{-5}$s and~$t = 2 \times 10^{-4}$s. In Figure~\ref{fig:steepening-plot}(right panels), we compare the approximation of~$\dpt{\psi}$ obtained for the nonlinear Westervelt equation ($k = -10$) and the damped linear wave equation ($k = 0$) along the line~$y = 0.5$. A steepening at the wavefront of the solution is clearly observed for the nonlinear model.

\begin{figure}[!ht]
    \centering
    \includegraphics[width = 0.45\textwidth]{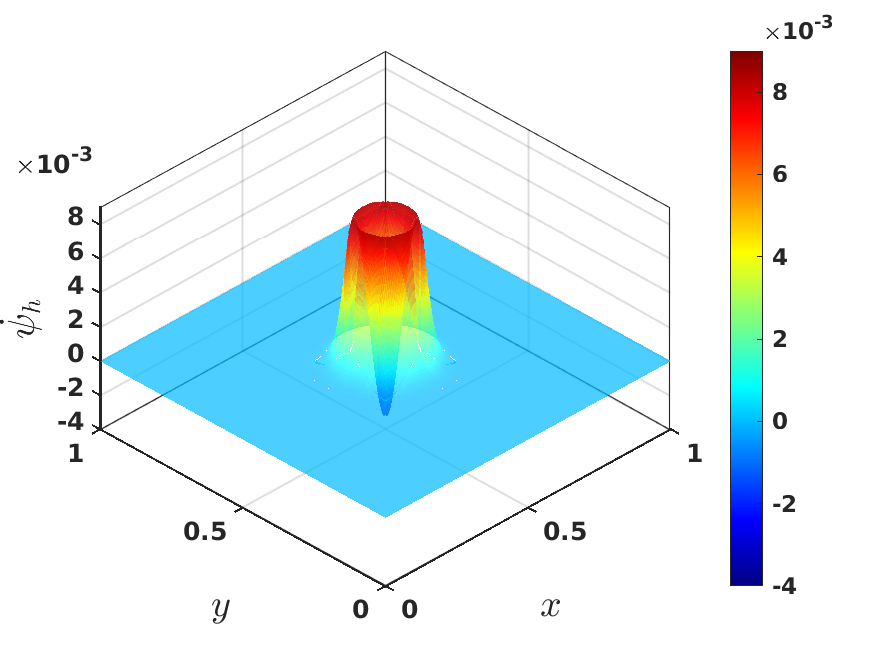}
    \hspace{0.1in}
    \includegraphics[width = 0.4\textwidth]{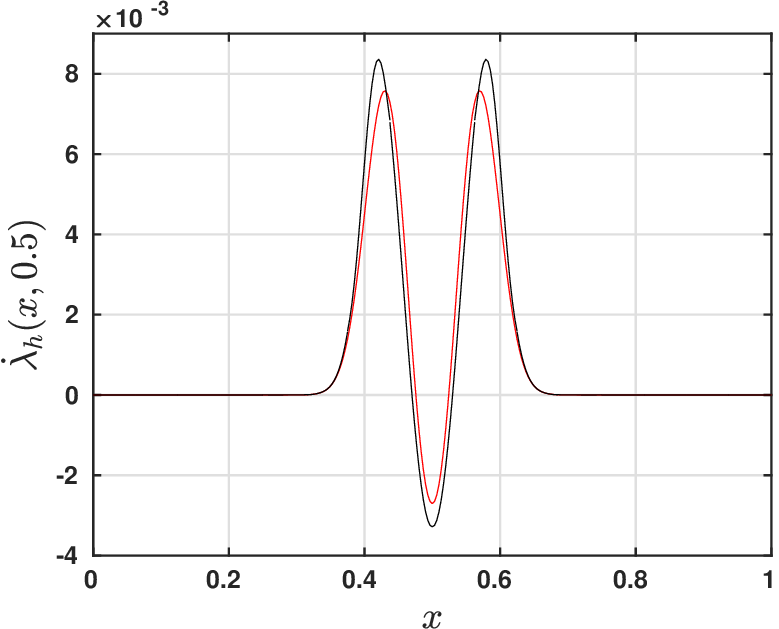}
    \\
    \includegraphics[width = 0.45\textwidth]{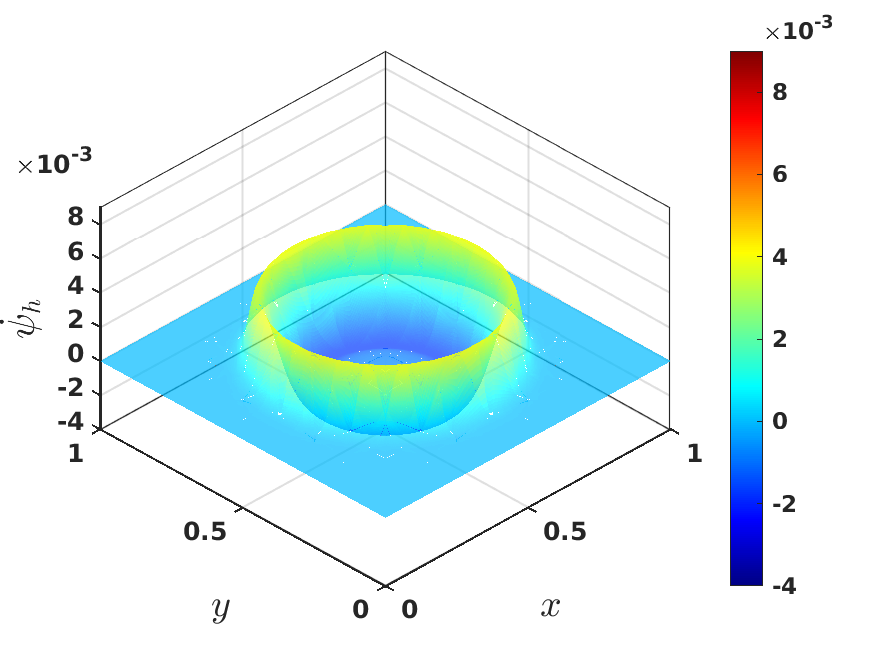}
    \hspace{0.2in}
    \includegraphics[width = 0.4\textwidth]{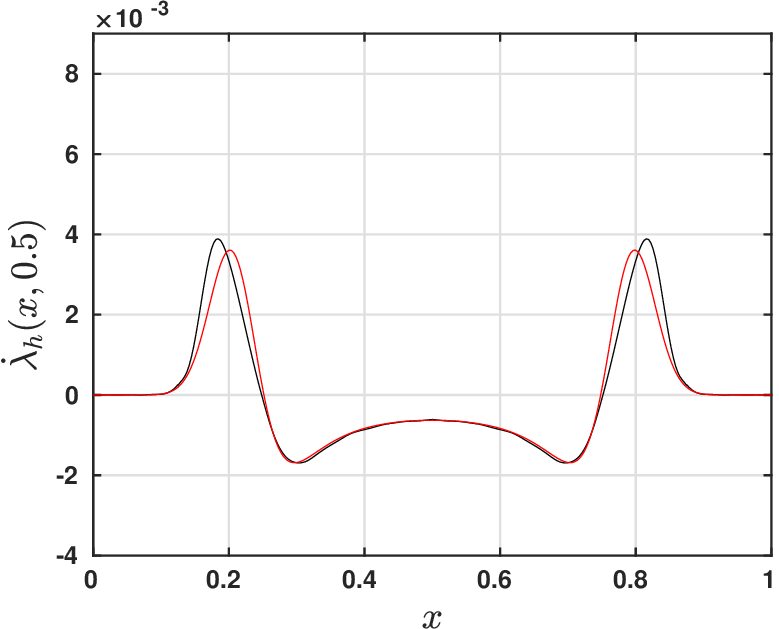}
    \caption{Results obtained at~$t = 5\times 10^{-5}$s (first row) and~$t = 2\times 10^{-4}$s (second row) for the test case in Section~\ref{SECT::WAVEFRONT}. \textbf{Left panels:} Approximation of~$\partial_t \psi$ obtained for~$p = 5$ and~$k = -10\text{s}^2\text{m}^{-2}$. \textbf{Right panels:} Comparison of the approximations obtained for the Westervelt equation (\textbf{black} lines) and the linear damped wave equation (\textbf{\textcolor{red}{red}} lines) along the line~$y = 0.5$.}
    \label{fig:steepening-plot}
\end{figure}

Since the forcing term~$\varphi$ in~\eqref{EQN::SOURCE-TERM-STEEPENING} is independent of~$\delta$, the~$\delta$-convergence estimates in Theorem~\ref{THM::DELTA-CONVERGENCE} are still valid. In Figure~\ref{fig:delta-convergence-wavefront}, we show the errors in~\eqref{EQN::DELTA-ERRORS} obtained at~$t = 10^{-4}$s. Convergence rates of order~$\mathcal{O}(\delta)$ are observed as in the numerical experiment of Section~\ref{SECT::DELTA-ERROR}.

\begin{figure}[!ht]
    \centering
    \includegraphics[width = 0.4\textwidth]{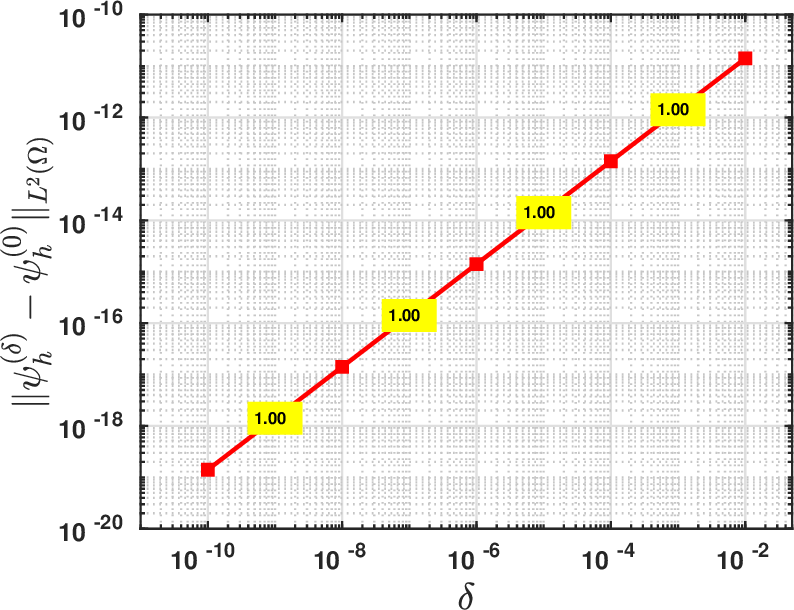} 
    \hspace{0.2in}
    \includegraphics[width = 0.4\textwidth]{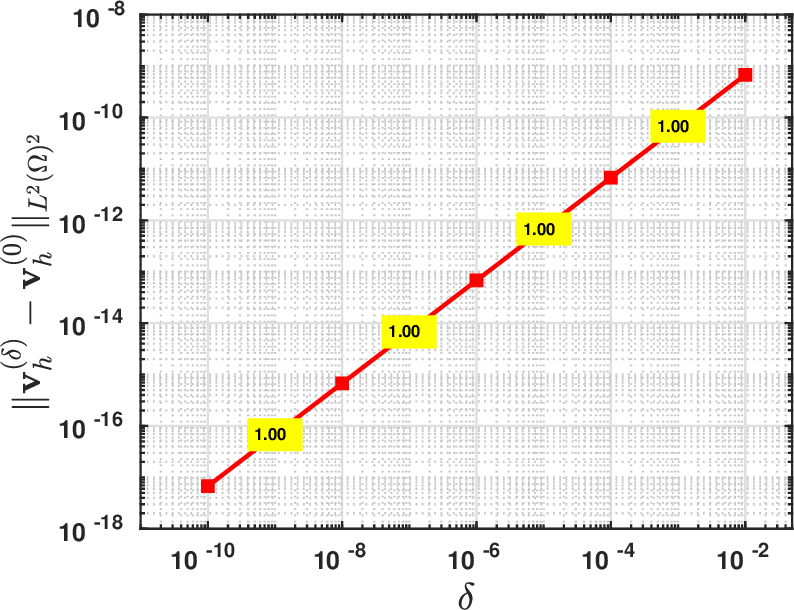} 
    \caption{$\delta$-convergence of the errors in~\eqref{EQN::DELTA-ERRORS} at~$t = 10^{-4}$s for the test case in Section~\ref{SECT::WAVEFRONT} with degree of approximation~$p = 5$.}
    \label{fig:delta-convergence-wavefront}
\end{figure}

\section{Conclusions \label{SECT::CONCLUSIONS}}
In this work, we have designed an asymptotic-preserving HDG method for the numerical simulation of the quasilinear Westervelt equation. 
We built up a well-posedness and approximation theory for this method, and illustrated our theoretical results with two-dimensional numerical experiments.

Optimal~$h$-convergence rates of order~$\mathcal{O}(h^{p+1})$ are proven for the approximation of the acoustic particle velocity~$\bv = \nabla \psi$, thus exceeding the expected convergence rates for most standard DG methods. 
Moreover, we have proven the convergence of the discrete approximation to the vanishing viscosity limit when the sound diffusivity parameter~$\delta$ tends to zero. Such a result guarantees the robustness of the method for small values of~$\delta$.

Our analysis imposes  
a restriction on the degree of approximation of the method, namely~$p\geq 1$. However, in the numerical experiments, we have obtained convergence of the method with respect to~$h$ and~$\delta$ also for~$p=0$. 
This is most likely due to the fact that the numerical experiments were performed for two-dimensional domains. Indeed, the case~$p=0$ is critical for dimension~$d=2$, as commented in Remark~\ref{REM::MINIMUM-DEGREE}.

The following are three interesting possible directions for the extension of our analysis:
\begin{itemize}
\item In view of the close relation between mixed FEM and HDG methods (see, e.g., \cite{Cockburn_Qiu_Shi_2012, Cockburn_Gopalakrishnan_Lazarov_2009}), we expect that the present analysis can be extended to a unified framework covering a large class of methods. 
\item More general polytopic meshes could be considered using the theory 
of~$M$-decompositions~\cite{Cockburn_Fu_Sayas_2017}, or hybrid high-order (HHO) 
methods~\cite{DiPietro_Ern_Lemaire_2014}. In particular, the stabilization term 
used for HHO methods allows for a simpler analysis, in the context of polytopic 
meshes, that does not rely on specific HDG projections.
\item The extension of the method to more general nonlinear sound propagation 
models such as the Kuznetsov equation~\cite{Kuznetsov_1970}.
\end{itemize}

In addition, the superconvergence of order~$\mathcal{O}(h^{p+2})$ for the local postprocessed approximation~$\psih^*$ defined in~\eqref{EQN::POSTPROCESSING}, 
and the asymptotic-preserving properties of fully discrete schemes as 
in~\cite{Dorich_Nikolic_2024}, with special attention to high-order time 
stepping schemes, are the subject of ongoing research.

\section*{Acknowledgements}
We thank the reviewers for their careful reading of the manuscript and helpful 
remarks, which have
led to marked improvements. We are greatful to Vanja Nikoli\'c (Radboud 
University) for her very valuable suggestions to improve the presentation of 
this work.
The first author acknowledges support from the Italian Ministry of
University and Research through the project
PRIN2020 ``Advanced polyhedral discretizations of heterogeneous PDEs for multiphysics problems",
and from the INdAM-GNCS through the project CUP\_E53C23001670001.


\appendix
\section{Proof of low-order energy stability estimate~\texorpdfstring{\eqref{EQN::ENERGY-STABILITY-LINEARIZED-1}}{}}\label{APP:ProofLowerOrder}
The ideas for the proof of the stability estimates are inspired by the~$\delta$-robust analysis carried out in~\cite[\S5]{Meliani_Nikolic_2023} for the mixed {FEM} approximation of the Westervelt equation.

Observe that~\eqref{EQN::SEMI-DISCRETE-LINEARIZED-1}--\eqref{EQN::SEMI-DISCRETE-LINEARIZED-3} imply 
\begin{subequations}
\label{EQN::AUX-LINEARIZED}
\begin{align}
\label{EQN::AUX-LINEARIZED-1-LOW}
c^2 \bm_h\left(\bvht, \brh\right) + c^2b_h\left(\psiht, \brh\right) + c^2e_h\left(\lambdaht, \brh\right)  & = {c^2}(\pdt{1}\bupsilon, \brh)_{\Omega} & &  \forall \brh \in \Qp,\\
\nonumber
m_h((1 + 2k\alpha_h) \psihtt, \wh)  - c^2b_h(\wh, \tbvh)  & \\
\label{EQN::AUX-LINEARIZED-2-LOW}
+ c^2 s_h(\tpsih, \wh) + c^2 f_h(\tlambdah, \wh)  & = (\varphi, \wh)_{\Omega} & & \forall \wh \in \Sp, 
\\
\label{EQN::AUX-LINEARIZED-3-LOW}
-  c^2e_h(\muh, \tbvh) +  c^2f_h(\muh, \tpsih) + c^2 g_h(\tlambdah, \muh) & = 0 & &  \forall \muh \in \Mp.
\end{align}
\end{subequations}

Taking~$\brh = \tbvh$, $\wh = \psiht$, and~$\muh = \lambdaht$ {in~\eqref{EQN::AUX-LINEARIZED}}, and summing the results, we get
\begin{equation}
\label{EQN::LOW-ORDER-IDENTITY}
\begin{split}
m_h((1 & + 2k \alphah) \psihtt, \psiht) \\
& + c^2 \big(\bm_h(\bvht, \bvh) + s_h(\psih, \psiht) + f_h(\lambdah, \psiht) + f_h(\lambdaht, \psih) + g_h(\lambdah, \lambdaht)\big) \\
& + \delta \left(\bm_h(\bvht, \bvht) + s_h(\psiht, \psiht) + 2f_h(\lambdaht, \psiht) + g_h(\lambdaht, \lambdaht)\right) \\ 
& \quad = c^2(\pdt{1}\bupsilon, \bvh)_{\Omega} + \delta(\pdt{1}\bupsilon, \bvht)_{\Omega} + (\varphi, \psiht)_{\Omega} .
\end{split}
\end{equation}

Moreover, the following identities follow from the definition of the discrete bilinear forms~$m_h(\cdot, \cdot)$, $s_h(\cdot, \cdot)$, $f_h(\cdot, \cdot)$, and~$g_h(\cdot, \cdot)$:
\begin{subequations}
\begin{align}
\label{EQN::AUX-IDENTITIES-LOW-1}
& m_h((1 + 2 k \alphah) \psihtt, \psiht) =  \frac12 \frac{d}{dt} \Norm{\sqrt{1 + {2}k \alphah(\cdot, t)} \psiht}{L^2(\Omega)}^2 - m_h(k \alphaht \psiht, \psiht), \\
\nonumber
& \bm_h(\bvht, \bvh)  + s_h(\psih, \psiht) + f_h(\lambdah, \psiht) + f_h(\lambdaht, \psih) + g_h(\lambdah, \lambdaht) \\
\label{EQN::AUX-IDENTITIES-LOW-2}
&\qquad \qquad  = \frac12 \frac{d}{dt} \left(\Norm{\bvh}{L^2(\Omega)^d}^2 + \Norm{\tau^{\frac12} (\lambdah - \psih)}{L^2((\partial \Th)^{\mathcal{I}})}^2 + \Norm{\tau^{\frac12}{\psiht}}{L^2((\partial \Th)^{\calD})}^2 \right), \\
\nonumber 
& \bm_h(\bvht, \bvht) + s_h(\psiht,  \psiht) + 2 f_h(\lambdaht, \psiht) + g_h(\lambdaht, \lambdaht) \\
\label{EQN::AUX-IDENTITIES-LOW-3}
& \qquad \qquad =  \Norm{\bvht}{L^2(\Omega)^d}^2 + \Norm{\tau^{\frac12} (\lambdaht - \psiht)}{L^2((\partial \Th)^{\mathcal{I}})}^2 + \Norm{\tau^{\frac12}\psiht}{L^2((\partial \Th)^{\calD})}^2.
\end{align}
\end{subequations}

Substituting the identities~\eqref{EQN::AUX-IDENTITIES-LOW-1}--\eqref{EQN::AUX-IDENTITIES-LOW-3} into~\eqref{EQN::LOW-ORDER-IDENTITY}, we get
\begin{align}
\nonumber
\frac{d}{dt} \calE_h^{(0)}[\psih, \bvh, \lambdah](t) & + \delta \left(\Norm{\bvht}{L^2(\Omega)^d}^2 + \Norm{\tau^{\frac12} (\lambdaht - \psiht)}{L^2((\partial \Th)^{\mathcal{I}})}^2 + \Norm{\tau^{\frac12}\lambdaht}{L^2((\partial \Th)^{\calD})}^2\right)\\
\label{EQN::IDENTITY-ENERGY0-B-TERMS}
= & \int_{\Omega} k \alphaht {(\psiht)^2} \dx +  c^2(\pdt{1}\bupsilon, \bvh)_{\Omega} + \delta(\pdt{1}\bupsilon, \bvht)_{\Omega} + (\varphi, \psiht)_{\Omega}.
\end{align}
All the terms multiplied by~${\delta}$ on the left-hand side of~\eqref{EQN::IDENTITY-ENERGY0-B-TERMS} are nonnegative. 
By using  
the Cauchy--Schwarz and the Young inequalities, we get
$$\delta(\pdt{1}\bupsilon, \bvht)_{\Omega} 
\leq \frac{\delta}{4} \Norm{\pdt{1}\bupsilon}{L^2(\Omega)^d}^2 + \delta\Norm{\bvht}{L^2(\Omega)^d}^2,$$ 
so the second term cancels out with the one on the left-hand side of~\eqref{EQN::IDENTITY-ENERGY0-B-TERMS}.
Integrating identity~\eqref{EQN::IDENTITY-ENERGY0-B-TERMS}
over~$(0,t)$ and using the H\"older and the Young inequalities, we deduce that
\begin{equation}
\label{EQN::AUX-ENERGY-IDENTITY-4-LOW}
\begin{split}
\calE_h^{(0)}[\psih, \bvh, \lambdah](t) \leq &\ \calE_h^{(0)}[\psih, \bvh, \lambdah](0) \\
& + \left(\Norm{k {(1 + 2k \alphah)^{-1}} \alphaht}{L^1(0, t; L^{\infty}(\Omega))} + \frac{\gamma}{2}\right)\Norm{\sqrt{1 + {2}k \alphah}\psiht}{L^{\infty}(0, t; L^2(\Omega))}^2 \\
& + \frac{1}{2\gamma}\Norm{(1 + {2}k \alphah)^{-\frac12} \varphi}{L^1(0, t; L^2(\Omega))}^2 + {\frac{\delta}{4}}\Norm{\pdt{1}\bupsilon}{L^2(0, t; L^2(\Omega)^d)}^2 \\
&+ {\frac{c^2}{2{\sigma_0}}} \Norm{\pdt{1}\bupsilon}{L^1(0, t; L^2(\Omega)^d)}^2 + {\frac {c^2 {\sigma_0}}2 }\Norm{{\bvh}}{L^\infty(0,t;L^2(\Omega)^d)}^2,
\end{split}
\end{equation}
for all~$\gamma > 0$.

Moreover, by using the H\"older inequality, we have the following bounds:
\begin{equation}
\label{EQN::AUX-ENERGY-IDENTITY-6}
\begin{split}
\Norm{(1 + 2k \alphah)^{-\frac12} \varphi}{L^1(0, t; L^2(\Omega))}^2 & \leq t \Norm{(1 + 2k \alphah)^{-\frac12} \varphi}{L^2(0, t; L^2(\Omega))}^2,\\
\Norm{\dpt \bupsilon}{L^1(0, t; L^2(\Omega)^d)}^2 & \leq t \Norm{\dpt \bupsilon}{L^2(0, t; L^2(\Omega)^d)}^2, \\
\Norm{\bvh}{L^{\infty}(0, t; L^2(\Omega)^d)}^2 & \leq \sup_{s \in (0, t)} \Norm{\bvh}{L^2(\Omega)^d}^2.
\end{split}
\end{equation}

Finally, the smallness assumption~\eqref{EQN::ENERGY-ASSUMPTION} {states that} there exist constants~$0 < \gamma_0 < {\sigma_0} < 1$ independent of~$h$ and~${\delta}$ such that
\begin{equation*}
\Norm{k {(1 + 2k \alphah)^{-1}} \alphaht}{L^1(0, t; L^{\infty}(\Omega))} + \frac{\gamma_0}{2} \leq \frac{|k|}{1 - 2|k|\ulal} \Norm{\alphaht}{L^1(0, t; L^\infty(\Omega))} + \frac{\gamma_0}{2}\leq \frac{{\sigma_0}}{2},
\end{equation*}
which, together with~\eqref{EQN::AUX-ENERGY-IDENTITY-4-LOW} {and~\eqref{EQN::AUX-ENERGY-IDENTITY-6}}, gives the low-order energy estimate {in}~\eqref{EQN::ENERGY-STABILITY-LINEARIZED-1}.

\section{Proof of {high}-order energy stability estimate~\texorpdfstring{\eqref{EQN::ENERGY-STABILITY-LINEARIZED-2}}{}}\label{APP:ProofHigherOrder}
The proof of the 
{high}-order stability estimate {in}~\eqref{EQN::ENERGY-STABILITY-LINEARIZED-2} follows by considering the time-differentiated system
\begin{align*}
c^2 \bm_h\left(\bvhtt, \brh\right) + c^2b_h\left(\psihtt, \brh\right) + c^2e_h\left(\lambdahtt, \brh\right)  & =  (\pdt{2}\bupsilon, \brh)_{\Omega}  & & \forall \brh \in \Qp,\\
\nonumber
m_h((1 + 2k\alpha_h) \psihttt, \wh) + m_h(2k\pdt{1}{\alpha}_h \psihtt, \wh)   - c^2b_h(\wh, \tbvht)  & \\
+ c^2 f_h(\tlambdaht, \wh) + c^2 s_h(\tpsiht, \wh) & = (\pdt{1}{\varphi}, \wh)_{\Omega} & & \forall \wh \in \Sp, 
\\
-  c^2e_h(\muh, \tbvht) +  c^2f_h(\muh, \tpsiht) + c^2 g_h(\tlambdaht, \muh) & = 0 & &  \forall \muh \in \Mp,
\end{align*}
choosing
$\brh = \tbvht$, $\wh = \psihtt$, and~$\muh = \lambdahtt$ as test functions,
and summing the resulting equations. The remaining steps are similar to those exposed in Appendix~\ref{APP:ProofLowerOrder} for the low-order estimate in~\eqref{EQN::ENERGY-STABILITY-LINEARIZED-1}, and are therefore omitted. 

\begin{thebibliography}{10}

\bibitem{Agarwal_Oregan_2001}
R.~P. Agarwal, M.~Meehan, and D.~O'regan.
\newblock {\em Fixed Point Theory and Applications}, volume 141.
\newblock Cambridge University Press, 2001.

\bibitem{Antonietti_ETAL_2020}
P.-F. Antonietti, I.~Mazzieri, M.~Muhr, V.~Nikoli\'{c}, and B.~Wohlmuth.
\newblock A high-order discontinuous {G}alerkin method for nonlinear sound
  waves.
\newblock {\em J. Comput. Phys.}, 415:109484, 27, 2020.

\bibitem{Brenner_Scott_2008}
S.~C. Brenner and L.~R. Scott.
\newblock {\em The mathematical theory of finite element methods}, volume~15 of
  {\em Texts in Applied Mathematics}.
\newblock Springer, New York, third edition, 2008.

\bibitem{Cockburn_Dong_Guzman_2008}
B.~Cockburn, B.~Dong, and J.~Guzm\'{a}n.
\newblock A superconvergent {LDG}-hybridizable {G}alerkin method for
  second-order elliptic problems.
\newblock {\em Math. Comp.}, 77(264):1887--1916, 2008.

\bibitem{Cockburn_Fu_Sayas_2017}
B.~Cockburn, G.~Fu, and F.-J. Sayas.
\newblock Superconvergence by {$M$}-decompositions. {P}art {I}: {G}eneral
  theory for {HDG} methods for diffusion.
\newblock {\em Math. Comp.}, 86(306):1609--1641, 2017.

\bibitem{Cockburn-ETAL-2018}
B.~Cockburn, Z.~Fu, A.~Hungria, L.~Ji, M.-A. S\'{a}nchez, and F.-J. Sayas.
\newblock Stormer-{N}umerov {HDG} methods for acoustic waves.
\newblock {\em J. Sci. Comput.}, 75(2):597--624, 2018.

\bibitem{Cockburn_Gopalakrishnan_Lazarov_2009}
B.~Cockburn, J.~Gopalakrishnan, and R.~Lazarov.
\newblock Unified hybridization of discontinuous {G}alerkin, mixed, and
  continuous {G}alerkin methods for second order elliptic problems.
\newblock {\em SIAM J. Numer. Anal.}, 47(2):1319--1365, 2009.

\bibitem{Cockburn_Gopalakrishnan_Sayas_2010}
B.~Cockburn, J.~Gopalakrishnan, and F.-J. Sayas.
\newblock A projection-based error analysis of {HDG} methods.
\newblock {\em Math. Comp.}, 79(271):1351--1367, 2010.

\bibitem{Cockburn_Guzman_Wang_2009}
B.~Cockburn, J.~Guzm\'{a}n, and H.~Wang.
\newblock Superconvergent discontinuous {G}alerkin methods for second-order
  elliptic problems.
\newblock {\em Math. Comp.}, 78(265):1--24, 2009.

\bibitem{Cockburn_Qiu_Shi_2012}
B.~Cockburn, W.~Qiu, and K.~Shi.
\newblock Conditions for superconvergence of {HDG} methods for second-order
  elliptic problems.
\newblock {\em Math. Comp.}, 81(279):1327--1353, 2012.

\bibitem{Cockburn_Quenneville-belair_2014}
B.~Cockburn and V.~Quenneville-B\'elair.
\newblock Uniform-in-time superconvergence of the {HDG} methods for the
  acoustic wave equation.
\newblock {\em Math. Comp.}, 83(285):65--85, 2014.

\bibitem{Demi_Verweij_2014}
L.~Demi and M.~D. Verweij.
\newblock {N}onlinear {A}coustics.
\newblock In {\em Comprehensive Biomedical Physics}, pages 387--399. Elsevier,
  Oxford, 2014.

\bibitem{DiPietro_Ern_Lemaire_2014}
D.~A. Di~Pietro, A.~Ern, and S.~Lemaire.
\newblock An arbitrary-order and compact-stencil discretization of diffusion on
  general meshes based on local reconstruction operators.
\newblock {\em Comput. Methods Appl. Math.}, 14(4):461--472, 2014.

\bibitem{Dorich_Nikolic_2024}
B.~D{\"o}rich and V.~Nikoli{\'c}.
\newblock Robust fully discrete error bounds for the {K}uznetsov equation in
  the inviscid limit.
\newblock {\em
  \href{https://doi.org/10.48550/arXiv.2401.06492}{arXiv:2401.06492}}, 2024.

\bibitem{Griesmaier_Monk_2014}
R.~Griesmaier and P.~Monk.
\newblock Discretization of the wave equation using continuous elements in time
  and a hybridizable discontinuous {G}alerkin method in space.
\newblock {\em J. Sci. Comput.}, 58(2):472--498, 2014.

\bibitem{Hochbruck_Maier_2022}
M.~Hochbruck and B.~Maier.
\newblock Error analysis for space discretizations of quasilinear wave-type
  equations.
\newblock {\em IMA J. Numer. Anal.}, 42(3):1963--1990, 2022.

\bibitem{Hughes_2000}
T.~J.~R. Hughes.
\newblock {\em The finite element method}.
\newblock Prentice Hall, Inc., Englewood Cliffs, NJ, 1987.

\bibitem{Kaltenbacher_Lasiecka_2009}
B.~Kaltenbacher and I.~Lasiecka.
\newblock Global existence and exponential decay rates for the {W}estervelt
  equation.
\newblock {\em Discrete Contin. Dyn. Syst. Ser. S}, 2(3):503--523, 2009.

\bibitem{Kaltenbacher_Meliani_Nikolic_2023}
B.~Kaltenbacher, M.~Meliani, and V.~Nikoli{\'c}.
\newblock Limiting behavior of quasilinear wave equations with fractional-type
  dissipation.
\newblock {\em Adv. Nonlinear Stud. To appear}, 2024.

\bibitem{Kaltenbacher_Nikolic_2022}
B.~Kaltenbacher and V.~Nikoli\'{c}.
\newblock Parabolic approximation of quasilinear wave equations with
  applications in nonlinear acoustics.
\newblock {\em SIAM J. Math. Anal.}, 54(2):1593--1622, 2022.

\bibitem{kaltenbacher2016shape}
B.~Kaltenbacher and G.~Peichl.
\newblock The shape derivative for an optimization problem in lithotripsy.
\newblock {\em Evol. Equ. Control Theory}, 5(3):399--429, 2016.

\bibitem{Kaltenbacher_2007}
M.~Kaltenbacher.
\newblock {\em Numerical Simulation of Mechatronic Sensors and Actuators},
  volume~2.
\newblock Springer, 2007.

\bibitem{Kaltenbacher_ETAL_2002}
M.~Kaltenbacher, H.~Landes, J.~Hoffelner, and R~Simkovics.
\newblock Use of modern simulation for industrial applications of high power
  ultrasonics.
\newblock In {\em 2002 IEEE Ultrasonics Symposium}, volume~1, pages 673--678.
  IEEE, 2002.

\bibitem{Kawashima_1992}
S.~Kawashima and Y.~Shibata.
\newblock Global existence and exponential stability of small solutions to
  nonlinear viscoelasticity.
\newblock {\em Comm. Math. Phys.}, 148(1):189--208, 1992.

\bibitem{Kuznetsov_1970}
V.~P. Kuznetsov.
\newblock Equations of nonlinear acoustics.
\newblock {\em Soviet Physics: Acoustic}, 16:467--470, 1970.

\bibitem{Maier_PhD_2020}
B.~Maier.
\newblock {\em Error analysis for space and time discretizations of quasilinear
  wave-type equations}.
\newblock PhD thesis, Karlsruher Institut f{\"u}r Technologie (KIT), 2020.

\bibitem{Meliani_2023}
M.~Meliani.
\newblock A unified analysis framework for generalized fractional
  {M}oore-{G}ibson-{T}hompson equations: well-posedness and singular limits.
\newblock {\em Fract. Calc. Appl. Anal.}, 26(6):2540--2579, 2023.

\bibitem{meliani2022analysis}
M.~Meliani and V.~Nikoli\'{c}.
\newblock Analysis of general shape optimization problems in nonlinear
  acoustics.
\newblock {\em Appl. Math. Optim.}, 86(3):Paper No. 39, 35, 2022.

\bibitem{Meliani_Nikolic_2023}
M.~Meliani and V.~Nikoli\'{c}.
\newblock Mixed approximation of nonlinear acoustic equations: {W}ell-posedness
  and a priori error analysis.
\newblock {\em Appl. Numer. Math.}, 198:94--111, 2024.

\bibitem{Meyer_Wilke_2009}
S.~Meyer and M.~Wilke.
\newblock Optimal regularity and long-time behavior of solutions for the
  {W}estervelt equation.
\newblock {\em Appl. Math. Optim.}, 64(2):257--271, 2011.

\bibitem{Nguyen_Peraire_Cockburn_2011}
N.~C. Nguyen, J.~Peraire, and B.~Cockburn.
\newblock Hybridizable discontinuous {G}alerkin methods for the time-harmonic
  {M}axwell's equations.
\newblock {\em J. Comput. Phys.}, 230(19):7151--7175, 2011.

\bibitem{Nikolic_2023}
V.~Nikoli{\'c}.
\newblock Asymptotic-preserving finite element analysis of {W}estervelt-type
  wave equations.
\newblock {\em
  \href{https://doi.org/10.48550/arXiv.2303.10743}{arXiv:2303.10743}}, 2023.

\bibitem{Nikolic_Wohlmuth_2019}
V.~Nikoli\'{c} and B.~Wohlmuth.
\newblock A priori error estimates for the finite element approximation of
  {W}estervelt's quasi-linear acoustic wave equation.
\newblock {\em SIAM J. Numer. Anal.}, 57(4):1897--1918, 2019.

\bibitem{Sanchez_Ciuca_Peraire_Cockburn_2017}
M.~A. S\'{a}nchez, C.~Ciuca, N.~C. Nguyen, J.~Peraire, and B.~Cockburn.
\newblock Symplectic {H}amiltonian {HDG} methods for wave propagation
  phenomena.
\newblock {\em J. Comput. Phys.}, 350:951--973, 2017.

\bibitem{shevchenko2015absorbing}
I.~Shevchenko and B.~Kaltenbacher.
\newblock Absorbing boundary conditions for nonlinear acoustics: the
  {W}estervelt equation.
\newblock {\em J. Comput. Phys.}, 302:200--221, 2015.

\bibitem{Westervelt_1963}
P.~J. Westervelt.
\newblock Parametric acoustic array.
\newblock {\em J. Acoust. Soc. Am.}, 35(4):535--537, 1963.

\end{thebibliography}
\end{document}